\newcommand{\fh}{\mathfrak{h}}
\newcommand{\bC}{\mathbb{C}}
\newcommand{\bN}{\mathbb{N}}
\newcommand{\cO}{\mathcal{O}}
\newcommand{\bP}{\mathbb{P}}
\newcommand{\bR}{\mathbb{R}}
\newcommand{\ft}{\mathfrak{r}}
\newcommand{\cY}{\mathcal{Y}}
\newcommand{\bZ}{\mathbb{Z}}
\newcommand\restr[2]{{
  \left.\kern-\nulldelimiterspace 
  #1 
  \vphantom{\big|} 
  \right|_{#2} 
  }}
\newtheorem{theorem}{Theorem}[section]
\newtheorem*{theorem*}{Theorem}
\newtheorem{proposition}[theorem]{Proposition}
\newtheorem*{proposition*}{Proposition}
\newtheorem{lemma}[theorem]{Lemma}
\newtheorem*{lemma*}{Lemma}
\newtheorem*{corollary*}{Corollary}
\newtheorem*{observation*}{Observation}
\newtheorem{definition}[theorem]{Definition}
\newtheorem*{definition*}{Definition}
\newtheorem*{remark*}{Remark}
\begin{document}
\title{Toric surfaces with equivariant Kazhdan-Lusztig atlases}
\author{Bal\'{a}zs Elek}
\date{\today}
\maketitle

\begin{abstract}
A Kazhdan-Lusztig atlas, introduced by He, Knutson and Lu in \cite{HKL}, on a stratified variety $(V,\cY)$ is a way of modeling the stratification $\cY$ of $V$ locally on the stratification of Kazhdan-Lusztig varieties $X^{w}_o\cap X_{v}$ by their intersection with opposite Schubert varieties $X_u$. We are interested in classifying smooth toric surfaces with Kazhdan-Lusztig atlases. This involves finding a degeneration of $V$ to a union of Richardson varieties in the flag variety $H/B_H$ of some Kac-Moody group $H$. We determine which toric surfaces have a chance at having a Kazhdan-Lusztig atlas by looking at their moment polytopes, then describe a way to find a suitable group $H$. More precisely, we find that (up to equivalence) there are $19$ or $20$ broken toric surfaces admitting simply-laced atlases, and that there are at most $7543$ broken toric surfaces where $H$ is any Kac-Moody group. 
\end{abstract}
\tableofcontents

\section{Introduction}\label{sec:introduction}
\begin{definition}\label{def:stratification}
Let $M$ be a variety. By a \textbf{stratification} $\cY_o$ of $M$, we mean a family of locally closed subvarieties indexed by a poset $\cY$ such that: $M=\bigsqcup_{X_o\in Y_o}X_o$, $\overline{X_o}=\bigsqcup_{X_o^\prime\leq X_o} X_o^\prime.$
\end{definition}
\noindent Frequently we'll work with the closures of the pieces, called \emph{strata}, and we'll indicate this by writing $\cY$ instead of $\cY_o$.

\begin{definition}\label{def:bruhatlas}
(He-Knutson-Lu \cite{HKL}) Let $M$ be a manifold with a stratification $\cY$ whose minimal strata are points. A \textbf{Bruhat atlas} on $(M,\cY)$ is the following data:
\begin{enumerate}
\item A Kac-Moody group $H$ with Borel subgroup $B_H$.
\item An open cover for $M$ consisting of open sets $U_f$ around the minimal strata $M=\bigcup_{f\in \cY_{\text{min}}} U_f.$
\item A ranked poset injection $w:\cY^{\text{opp}}\hookrightarrow W_H$ whose image is a union $\bigcup_{f\in \cY_{\text{min}}}[e,w(f)]$ of Bruhat intervals.
\item For $f\in \cY_{\text{min}}$, a stratified isomorphism
\[
c_f:U_f \stackrel{\sim}{\to} X_o^{w(f)} \subset H/B_H, \quad \text{where}\quad X_o^{w(f)}=\overline{Bw(f)B/B}.
\]
\end{enumerate}
\end{definition}
\noindent Examples of manifolds with Bruhat atlases include:
\begin{enumerate}
\item Grassmannians $Gr(k,n)$ with their positroid stratification, whose $H=\widehat{SL(n)}$ (Snider \cite{S}).
\item More generally, partial flag varieties $G/P$ with the stratification by projected Richardson varieties (for this stratification, see Knutson-Lam-Speyer \cite{KLS}) (He-Knutson-Lu \cite{HKL}).
\item Wonderful compactifications of groups (He-Knutson-Lu \cite{HKL}).
\end{enumerate}

\begin{definition}\label{def:eqvtbruhatlas}
Let $(M,\cY)$ be a stratified manifold with an action of a torus $T_M$. An \textbf{equivariant Bruhat atlas} is a Bruhat atlas $(H,\{ c_f \}_{f\in\cY_{\text{min}}},w)$ and a map $T_M\hookrightarrow T_H$ such that
\begin{enumerate}
\item each of the chart maps $c_f$ is $T_M$-equivariant, and
\item there is a $T_M$-equivariant degeneration
\begin{align}
M\rightsquigarrow M^\prime &:= \bigcup_{f\in \cY_{\text{min}}} X^{w(f)} \label{eqn:degen}
\end{align}
of $M$ into a union of Schubert varieties, carrying the anticanonical line bundle on $M$ to the $\cO(\rho)$ line bundle restricted from $H/B_H$.
\end{enumerate}
\end{definition}
\noindent When $M$ is a toric variety (as it will be in this paper) then (\ref{eqn:degen}) gives us a decomposition of $M$'s moment polytope into the moment polytopes of the $X^{w(f)}$'s, e.g.:

\begin{figure}[h]
\centering
\includegraphics{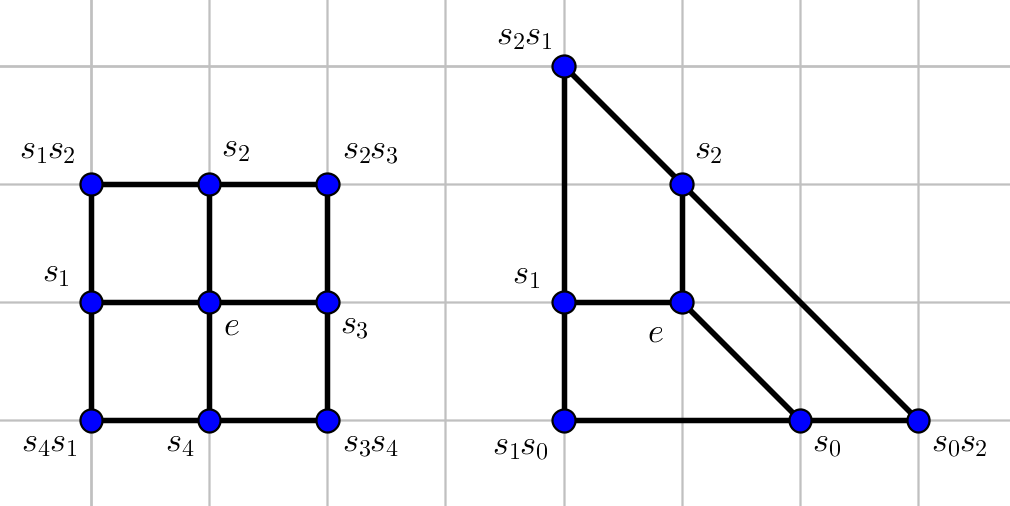}\caption{Equivariant Bruhat atlases}\label{fig:eqvtbruhatlases}
\end{figure}

\noindent The first polytope in Figure \ref{fig:eqvtbruhatlases} (the moment polytope of $\bC\bP^1\times \bC\bP^1$) is subdivided into four smaller squares, which represent $\bC\bP^1\times \bC\bP^1$ degenerating into a union of four $\bC\bP^1\times \bC\bP^1$'s. In the second polytope, $\bC\bP^2$ is degenerating to a union of three Schubert varieties, each isomorphic to the first Hirzebruch surface. The labels of the vertices are coming from the map $w$ of definition \ref{def:bruhatlas}, and the groups $H$ are $(SL_2(\bC))^4$ and $\widehat{SL_2(\bC)}$, respectively.

\subsection{Kazhdan-Lusztig atlases}\label{sec:KLatlas}
For inductive classification purposes, we want to determine what sort of structure a stratum $Z\in\cY$ inherits from the Bruhat atlas on $M$. Each $Z$ has a stratification $\restr{\cY}{Z}$, and an open cover
\[
\bigcup_{f\in \cY_{\text{min}}} U_f\cap Z,\quad\text{with}\quad U_f\cap Z \cong X^{w(f)}_o\cap X_{w(Z)}
\]
compatible with the stratification, since by (\ref{def:bruhatlas}), the isomorphism $U_f\cong X^{w(f)}_o$ is stratified. Therefore $Z$ has an ``atlas'' composed of Kazhdan-Lusztig varieties (defined as $X^{w}_{v,o}=X^{w}_o\cap X_{v}$). This leads us to the following definition:
\begin{definition}\label{def:KLatlas}(He-Knutson-Lu \cite{HKL}) A \textbf{Kazhdan-Lusztig atlas} on a stratified $T$-variety $(V,\cY)$ with $V^T$ finite is:
\begin{enumerate}
\item A Kac-Moody group $H$.
\item A ranked poset injection $w_M:\cY^{\text{opp}}\to W_H$ whose image is $\bigcup_{f\in V^T} [w(V),w(f)]$.
\item An open cover $V=\bigcup U_f$ consisting, around each $f\in V^T$ of an affine variety $U_f$ and a choice of a $T$-equivariant stratified isomorphism
\[
U_f \cong X^{w(f)}_o\cap X_{w(V)}.
\]
In particular, $V$ and $U_f$ need not be smooth.
\item A $T_V$-equivariant degeneration $V \rightsquigarrow V^\prime = \bigcup_{f\in V^T} X^{w(f)}\cap X_{w(V)}\label{eqn:kldegen}$.
\end{enumerate}
\end{definition}

\subsection{Toric surfaces with Bruhat atlases}
We are interested in the classification of manifolds with equivariant Bruhat atlases. We consider toric manifolds as a starting point. Putting an equivariant Bruhat atlas on a toric manifold $M$ would mean associating an element $w(f)\in W_H$ to each face of $M$'s moment polytope (provided we figure out what the group $H$ should be). Obviously there are restrictions to this; for instance, each of the vertex labels must have length equal to $n=\text{dim}(M)$.

The simplest nontrivial case of a toric manifold is a toric surface, and in this case, the moment polytope of $M$ is just a convex polygon.
\begin{theorem}
The only toric surfaces admitting equivariant Bruhat atlases are $\bC\bP^2$ and $\bC\bP^1\times \bC\bP^1$, as in Figure \ref{fig:eqvtbruhatlases}.
\end{theorem}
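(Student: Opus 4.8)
The plan is to convert an equivariant Bruhat atlas on a smooth toric surface $M$ into a rigid condition on the lattice edge-lengths of $M$'s anticanonical moment polygon, and then to inspect the finitely many polygons that survive. First I would reduce to the Fano case: since $\rho$ is a regular dominant weight, $\cO(\rho)$ is ample on $H/B_H$, so its restriction to the limit $M'=\bigcup_{f\in\cY_{\min}}X^{w(f)}$ of the degeneration (\ref{eqn:degen}) is ample; as that degeneration is a flat proper family over $\bA^1$ whose general fibre is $M$ with its anticanonical bundle playing the role of $\cO(\rho)|_{M'}$, and ampleness is an open condition on the base of such a family, $-K_M$ is ample. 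Hence $M$ is a smooth toric Fano surface, of which there are exactly five up to isomorphism — $\bC\bP^2$, $\bC\bP^1\times\bC\bP^1$, the Hirzebruch surface $\mathbb{F}_1$, and the blow-ups of $\bC\bP^2$ at two or three torus-fixed points — and it remains to see which of their five anticanonical polygons can occur.

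Next I would extract the combinatorics. Under the moment map for $-K_M=\cO(\rho)|_{M'}$, the decomposition $M'=\bigcup_f X^{w(f)}$ subdivides the anticanonical moment polygon $P$ of $M$ into the moment polygons $P_f$ of the Schubert surfaces $X^{w(f)}$ (taken with $\cO(\rho)$), one per vertex $f$ of $P$, with pairwise disjoint interiors. Each $X^{w(f)}$ is a $2$-dimensional smooth Schubert variety (hence, through its Bott--Samelson resolution, a Hirzebruch surface) carrying an effective $T_M$-action, because $U_f\cong X^{w(f)}_o$ is a smooth toric chart of $M$; so each $P_f$ is an honest $2$-dimensional lattice polygon, and the chart isomorphism together with functoriality of the degeneration forces the corner cone of $P_f$ at $f$ to agree with the corner cone of $P$ at $f$. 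Thus $\{P_f\}$ tiles $P$, each $P_f$ meeting $\partial P$ precisely along the two edges of $P$ through $f$.

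The payoff is that every edge $E$ of $P$ has lattice length $\ge 2$: writing $f,g$ for its endpoints, $P_f\cap E$ and $P_g\cap E$ are edges of the lattice polygons $P_f,P_g$ along $E$, each of lattice length $\ge 1$, and since $P_f\ne P_g$ are distinct tiles they meet at most in the point where they abut along $E$. Checking the five Fano polygons, only the anticanonical triangle of $\bC\bP^2$ (edge-lengths $3,3,3$) and the anticanonical square of $\bC\bP^1\times\bC\bP^1$ (edge-lengths $2,2,2,2$) satisfy this; those of $\mathbb{F}_1$ ($1,2,3,2$), of $\mathrm{Bl}_2\bC\bP^2$ ($1,1,1,2,2$) and of $\mathrm{Bl}_3\bC\bP^2$ ($1,1,1,1,1,1$) each have an edge of lattice length $1$ and are excluded. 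Since $\bC\bP^2$ and $\bC\bP^1\times\bC\bP^1$ do admit equivariant Bruhat atlases (Figure~\ref{fig:eqvtbruhatlases}, and Snider's positroid atlas \cite{S} on $Gr(1,3)=\bC\bP^2$), this completes the classification.

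I expect the real work to lie in the middle step — turning the $T_M$-equivariant, $\cO(\rho)$-polarized degeneration of $M$ into an actual polygon subdivision: that $M'$ is a reduced surface whose components are the $X^{w(f)}$, meeting only in curves; that the $T_M$-action on each $X^{w(f)}$ is effective, so $P_f$ is full-dimensional (in fact a trapezoid with parallel edges of lattice lengths $1$ and $1+c_f$ for a Cartan integer $c_f$ of $H$, though only two-dimensionality is used here); and that the local cones match along $\partial P$. With that dictionary established, the edge-length bookkeeping and the standard classification of smooth toric del Pezzo surfaces finish everything.
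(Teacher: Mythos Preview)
Your argument is correct and takes a genuinely different route from the paper. The paper deduces this theorem from its full pizza classification: it determines all simply-laced pizzas via the $Br_3$/nutritive-value machinery and a computer search, and then notes in Section~\ref{sec:listofatlases} that exactly two of them have height~$0$ (center labeled $e$), namely $[A_2,A_2,A_2]$ and $[A_1\times A_1]^4$. Your approach instead short-circuits the classification: from the $\cO(\rho)$-polarized degeneration you extract that $-K_M$ is ample, reduce to the five smooth toric del Pezzo surfaces, and then eliminate $\mathbb{F}_1$, $\mathrm{Bl}_2\bC\bP^2$, $\mathrm{Bl}_3\bC\bP^2$ by the observation that the Schubert quadrilaterals $P_f,P_g$ flanking each boundary edge $E$ force the lattice length of $E$ to be at least~$2$. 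This is a clean, self-contained argument that avoids both the braid-group bookkeeping and the computer search, and it uniformly handles the non-simply-laced pieces (the $G_2$ and $KM(k)$ Schubert pieces) without any case analysis, since your edge-length inequality does not care which Schubert surface sits at each corner.

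What the paper's approach buys, by contrast, is that the height-$0$ statement drops out of a much richer structure theorem (the list of all simply-laced pizzas and their Kazhdan--Lusztig atlases), which is the real goal of the paper. Your third paragraph is honest about where the work lies: the assertion that the $T_M$-moment images $P_f$ of the components $X^{w(f)}$ give a genuine lattice subdivision of $P$, with the corner cone of each $P_f$ matching that of $P$ at $f$ and with $P_f\cap E$, $P_g\cap E$ abutting along $E$ rather than overlapping or leaving a gap. The paper also takes this pizza structure as given (``the moment polytope of $M'$ will be a subdivision of that of $M$''), so you are on equal footing there; the equivariance of the chart maps $c_f$ and the injectivity of $w$ supply what you need.
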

\begin{proof}
This will follow from our main theorem (see section \ref{sec:listofatlases}).
\end{proof}

\noindent As we mentioned before, our strategy is to to try to classify smooth toric surfaces admitting a Kazhdan-Lusztig atlas, and use this knowledge to answer questions about Bruhat atlases on higher-dimensional manifolds. Our main results are:

\begin{itemize}
\item The classification (in \ref{sec:pieces}) of \textbf{Richardson quadrilaterals}, the moment polytopes of $2$-dimensional Richardson varieties in Kac-Moody flag manifolds with respect to their $\mathcal{O}(\rho)$ line bundles.
\item The classification of all lattice polygons with decompositions into the moment polytopes of Richardsons appearing in simply laced Kac-Moody groups.
\item In the simply laced case: whenever possible, a description of a Kazhdan-Lusztig atlas on each of the smooth toric varieties with lattice polygons as above.
\item In the simply laced case, embeddings of the degenerations (\ref{eqn:kldegen}) in $H/B_H$ for atlases with $H$ of finite type.
\end{itemize}

\section{Pizzas}
\subsection{Motivation and definition}
Since any smooth lattice polygon in $\bZ^2\subset\bR^2$ has a smooth toric variety associated to it, we only need to look at which varieties degenerate to our desired unions of Schubert varieties in various flag manifolds. Since the degeneration preserves symplectic volume and is $T_M$-equivariant, the moment polytope of $M^\prime$ will be a subdivision of that of $M$. Moreover, the newly formed pieces have to be moment polytopes of Richardson surfaces in $H/B_H$, hence they have to be quadrilaterals (since height $2$ intervals in Bruhat order are diamonds). So the moment polytope $\Phi^\prime(M^\prime)$ will look like a sliced up pizza, e.g.

\begin{figure}[H]
\centering
\includegraphics[width=5cm]{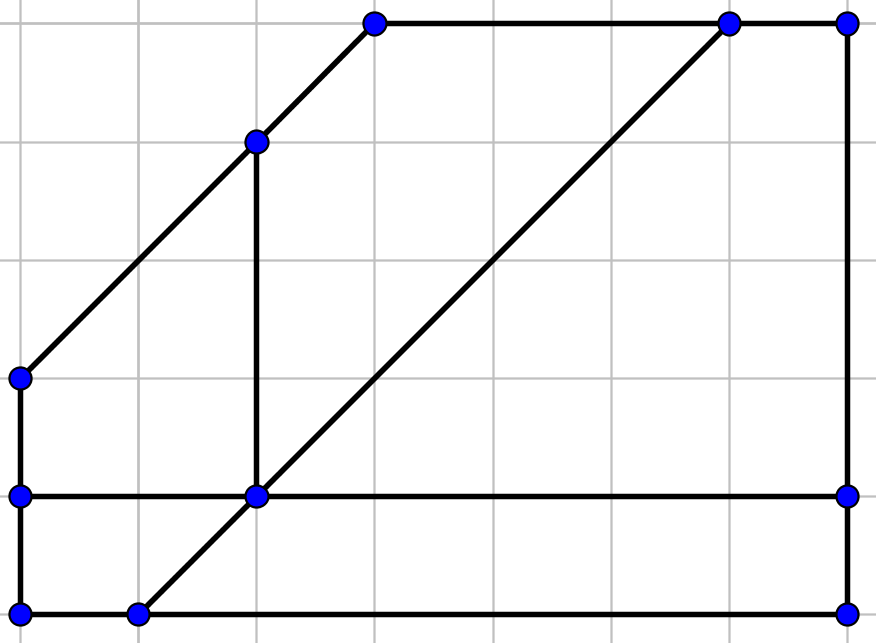}
\end{figure}

Motivated by the above figure, we define
\begin{definition}\label{def:latticepizza} A \textbf{lattice pizza} is a lattice polygon with a ``star-shaped'' subdivision into Richardson quadrilaterals, which will be referred to as \textbf{pizza pieces}\footnote{It is regrettable that we have to avoid the obvious name ``pizza slice'' for these, but slicing already has a standard meaning in mathematics.} (listed in section \ref{sec:pieces}).
\end{definition}
\begin{definition}\label{def:pizza} A \textbf{pizza} is an equivalence class of lattice pizzas under the following equivalence relation: Two lattice pizzas are equivalent if there is a stratification-preserving homeomorphism such that, up to a global $GL(2,\bZ)$-transformation, the angles between the edges match simultaneously.
\end{definition}

We want to see when we can glue a list of pieces into a pizza. If we $SL(2,\mathbb{Z})$-shear a piece to be in a position where the center of the pizza is the piece's bottom right corner, and the edges adjacent to the bottom left corner of the piece are in the position of the standard basis in $\bR^2$ (we will refer to this as the \textbf{standard position}), and compare this to how the next piece has to be glued on, we can associate a matrix in $GL(2,\bR)_+$ to a piece. Consider the following picture of a piece corresponding to an opposite Schubert surface in $A_2$.

\begin{figure}[H]
\centering
\includegraphics[width=3cm]{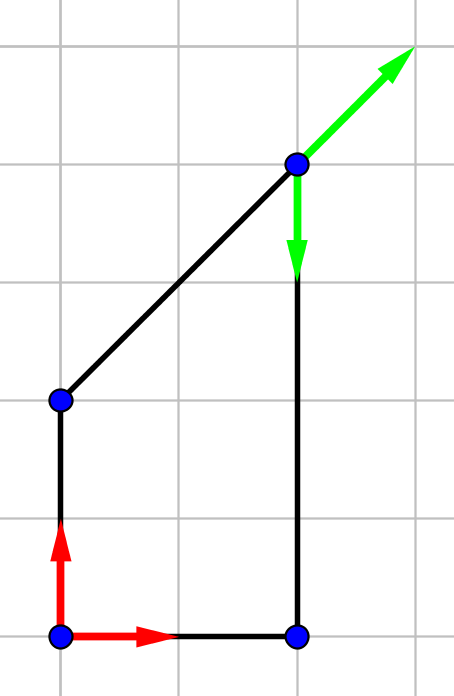}
\end{figure}

\noindent The piece has been sheared to this standard position, with the red basis at the SW corner being the standard basis, and the green basis at the NE corner is where we have to glue the next piece. So we associate the matrix
\[
M=
\begin{pmatrix}
0 & 1 \\
-1 & 1
\end{pmatrix}
\]
to this piece. If the next piece we attach is a $\bC\bP^1\times \bC\bP^1$, then this will change the green basis to the purple one in the following picture:

\begin{figure}[H]
\centering
\includegraphics[width=4cm]{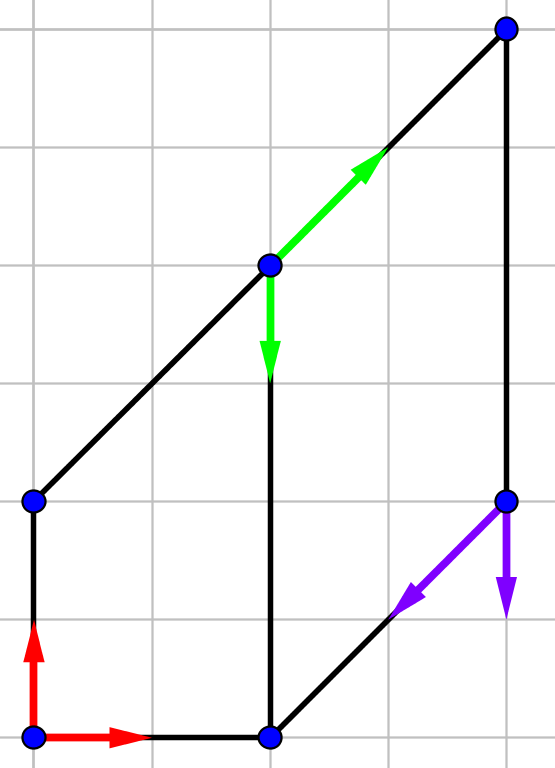}
\end{figure}

In order to find how the basis has changed from the red one to the purple one, note that first we changed the red basis to the green one using $M$, and then used the second piece to turn the green one to the purple one. So if we know how the $\bC\bP^1\times \bC\bP^1$ piece changes the standard basis, say, by a matrix $N$, then we can compute how the red basis turns into the purple one by computing the product
\[
( M N M^{-1} ) M = MN.
\]
So we only need to associate one matrix in $GL_2(\bR)_+$ to a piece, namely the one where the bottom right corner has been moved to the standard basis position. It is not hard to see that the matrix associated to $\bC\bP^1\times \bC\bP^1$ is
\[
N=
\begin{pmatrix}
0 & 1 \\
-1 & 0 
\end{pmatrix}
\]
and we obtain
\[
MN=\begin{pmatrix}
0 & 1 \\
-1 & 1
\end{pmatrix}
\begin{pmatrix}
0 & 1 \\
-1 & 0 
\end{pmatrix}
=\begin{pmatrix}
-1 & 0\\
-1 & -1
\end{pmatrix}
\]
which indeed corresponds to the purple basis. Now it should be easy to believe the following theorem:
\begin{theorem}\label{thm:pizzacondition}
Let $M_1,M_2,\ldots, M_l$ be the matrices associated to a given list of pizza pieces. If the pieces form a pizza, then $\prod_{i=1}^l M_i = 
\begin{pmatrix}
1 & 0 \\
0 & 1
\end{pmatrix}$.
\end{theorem}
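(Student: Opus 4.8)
The plan is to make rigorous the ``walk once around the center of the pizza'' picture from the motivation, upgrading the sample computation $(MNM^{-1})M = MN$ to an induction. First I would fix notation: label the pizza pieces $1,2,\dots,l$ in counterclockwise cyclic order about the center $c$, so that piece $i$ and piece $i{+}1$ (indices mod $l$) share an edge emanating from $c$ and the wedges the pieces cut out at $c$ tile a neighborhood of $c$. For each $i$, let $\phi_i$ be the shear placing piece $i$ in standard position (center at the bottom-right corner, the two edges at the bottom-left corner equal to the standard basis of $\bR^2$); then, by definition, $M_i\in GL(2,\bR)_+$ is the matrix taking the ``incoming'' frame at $c$ to the ``outgoing'' frame at $c$ in that standard position, i.e.\ the red-to-green change in the figures. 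Finally, fix a reference frame at $c$ along the edge shared by piece $l$ and piece $1$.

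The core step is to show, by induction on $k$, that the transformation $\Psi_k\in GL(2,\bR)_+$ carrying the reference frame to the frame reached after gluing on pieces $1,\dots,k$ in order equals $M_1M_2\cdots M_k$; the base case $\Psi_0 = I$ is immediate. For the inductive step, the key observation is that piece $k$ is attached at the ``end'' of the partially assembled fan, so its standard coordinate system is precisely the reference coordinate system transported by the accumulated transformation $\Psi_{k-1}$. Hence the transformation that piece $k$ induces on the reference frame is the conjugate $\Psi_{k-1}M_k\Psi_{k-1}^{-1}$, and therefore
\[
\Psi_k = \bigl(\Psi_{k-1}M_k\Psi_{k-1}^{-1}\bigr)\,\Psi_{k-1} = \Psi_{k-1}M_k = M_1\cdots M_k ,
\]
the conjugating factors telescoping away exactly as in the displayed identity $(MNM^{-1})M=MN$. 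Applying this with $k = l$ and using that the pieces of a pizza completely surround $c$ — so that reassembling the whole fan carries the reference frame back to itself — gives $\Psi_l = I$, and hence $\prod_{i=1}^{l} M_i = \Psi_l = I$.

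I expect the main obstacle to be the bookkeeping buried in the inductive step: making a globally consistent choice of which edge at $c$ counts as ``incoming'' and which as ``outgoing'' for each piece, checking that after the shears $\phi_i$ the shared edges of consecutive pieces are identified compatibly, and verifying that piece $k$'s standard coordinate system is transported onto the reference system by $\Psi_{k-1}$ itself rather than by $\Psi_{k-1}$ followed by some residual shear. Once these conventions are pinned down the computation is purely formal and the ``closing up'' at the end is automatic, so essentially all of the work lies in organizing the conjugation bookkeeping so that the telescoping is legitimate.
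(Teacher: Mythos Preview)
Your proposal is correct and is exactly the approach the paper intends: the paper does not give a formal proof at all, but merely says ``it should be easy to believe'' Theorem~\ref{thm:pizzacondition} after the telescoping computation $(MNM^{-1})M = MN$, leaving the straightforward induction you spell out to the reader. The only cosmetic discrepancy is orientation---the paper attaches pieces clockwise rather than counterclockwise---but since the conclusion is that the product equals the identity this is immaterial.
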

Since our pizza pieces are all lattice polygons, we know that the $GL(2,\bR)_+$-matrices associated to the pizza pieces will have integer entries. Therefore, in order to satisfy Theorem \ref{thm:pizzacondition}, all the matrices of the pieces will in fact be in $SL(2,\bZ)$.

The above condition is necessary, but we can make some further observations to reduce this to a finite problem. We would like to embed our pizza in $\bR^2$, so we would like to wind around the origin once using the pieces. To contend with the winding number, we lift these matrices from $SL(2,\bR)$ to its universal cover $\widetilde{SL_2(\bR)}$. We will represent an element of $\widetilde{SL_2(\bR)}$ by its matrix $M$, together with a homotopy class of a path $\gamma$ in $\bR^2\setminus \overrightarrow{0}$ connecting $\begin{pmatrix} 1 \\ 0 \end{pmatrix}$ to $M\begin{pmatrix} 1 \\ 0 \end{pmatrix}$. Elements of $\widetilde{SL_2(\bR)}$ multiply by multiplying the matrices and concatenating the paths appropriately.

We will therefore, associate to a pizza piece a pair $(M,\gamma)$ where $M$ is the matrix defined above and $\gamma$ is the (class of the) straight line path connecting $\begin{pmatrix} 1 \\ 0 \end{pmatrix}$ to $M\begin{pmatrix} 1 \\ 0 \end{pmatrix}$, i.e.
\begin{figure}[H]
\centering
\includegraphics{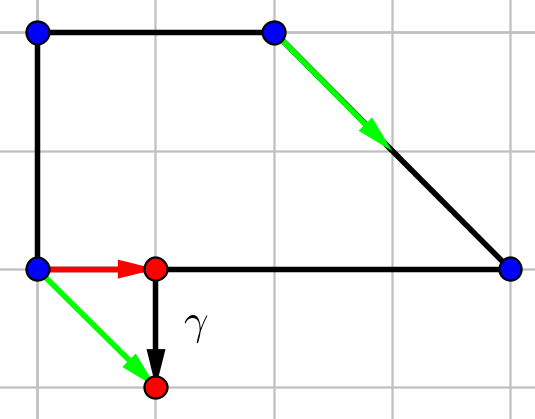}
\end{figure}
\noindent By direct check on the list of pieces (in section \ref{sec:pieces}), we note that none of these straight line paths pass through the origin. Then attaching a pizza piece $(N,\mu)$ clockwise to a sequence of pieces with current basis $M$ and current path $\gamma$ will yield $(MN, \gamma \circ M(\mu))$. Consequently, if a given set of pieces results in a pizza, we will have a closed loop around the origin based at $\begin{pmatrix} 1 \\ 0 \end{pmatrix}$, with a well-defined winding number $1$. Also, as this path is equivalent (by sending all vectors to their negatives) to the path consisting of following the primitive vectors of the spokes (\ref{def:spoke}) of the pizza, the winding number will coincide with the number of layers of our pizza, as exemplified in the following picture:
\begin{figure}[H]
\centering
\includegraphics{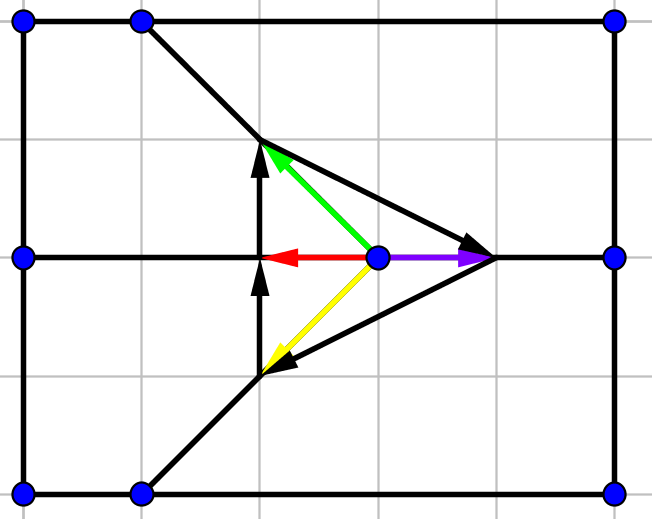}
\end{figure}

\begin{theorem}\label{thm:braidgroup} (Wikipedia) The preimage of $SL_2(\bZ)$ inside $\widetilde{SL_2(\bR)}$ is $Br_3$, the braid group on 3 strands.
\end{theorem}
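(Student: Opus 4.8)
The plan is to realize the preimage in question as the fundamental group of a homogeneous space, and then to recognize that space, up to homotopy, as one whose fundamental group is visibly $Br_3$. Write $p\colon\widetilde{SL_2(\bR)}\to SL_2(\bR)$ for the universal covering homomorphism and $G:=p^{-1}(SL_2(\bZ))$ for the group we want to understand.

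First I would use that $SL_2(\bZ)$, being discrete in $SL_2(\bR)$, acts freely and properly discontinuously on $SL_2(\bR)$ by right translation, so $SL_2(\bR)\to SL_2(\bR)/SL_2(\bZ)$ is a covering map; composing with $p$ yields a covering $q\colon\widetilde{SL_2(\bR)}\to SL_2(\bR)/SL_2(\bZ)$, which is universal since its total space is simply connected. Because $\widetilde{SL_2(\bR)}$ is a topological group and $p$ a homomorphism, the fibre of $q$ through $\tilde g$ is exactly the coset $\tilde g\,G$, so right translations by $G$ act simply transitively on the fibres; hence they form the deck group of $q$, and $\pi_1\!\left(SL_2(\bR)/SL_2(\bZ)\right)\cong G$. (The long exact sequence of $q$ also recovers the expected central extension $1\to\bZ\cong\pi_1(SL_2(\bR))\to G\to SL_2(\bZ)\to 1$.)

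Next I would identify $SL_2(\bR)/SL_2(\bZ)$ up to homotopy with the space of lattices in $\bC$. Using $GL_2^+(\bR)\cong SL_2(\bR)\times\bR_{>0}$ with $SL_2(\bZ)$ acting trivially on the second factor, $GL_2^+(\bR)/SL_2(\bZ)$ is homotopy equivalent to $SL_2(\bR)/SL_2(\bZ)$ and is canonically the space of all lattices $\Lambda\subset\bC$ (a positively oriented ordered basis of $\Lambda$ is a point of $GL_2^+(\bR)$, well defined up to $SL_2(\bZ)$). The classical period map $\Lambda\mapsto(g_2(\Lambda),g_3(\Lambda))$ is then a homeomorphism onto $\{(g_2,g_3)\in\bC^2:g_2^3\ne27g_3^2\}$, and a linear change of coordinates turns this into $\{(a,b)\in\bC^2:4a^3+27b^2\ne0\}$, the space of monic cubics $z^3+az+b$ with distinct roots (as $-4a^3-27b^2$ is the discriminant). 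That space is the fibre over $0$ of the sum-of-roots map $\mathrm{UConf}_3(\bC)\to\bC$, which is a trivial bundle with contractible base, so it is homotopy equivalent to $\mathrm{UConf}_3(\bC)$; and $\pi_1(\mathrm{UConf}_3(\bC))=Br_3$ by Artin's theorem. Chaining these, $G\cong\pi_1(SL_2(\bR)/SL_2(\bZ))\cong\pi_1\!\left(\bC^2\setminus\{4a^3+27b^2=0\}\right)\cong Br_3$.

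The step I expect to require the most care is the assertion that the period map is a homeomorphism onto $\{g_2^3\ne27g_3^2\}$: continuity in both directions is immediate from the convergent Eisenstein series, but bijectivity is the uniformization statement that every value of the $j$-invariant is attained by a unique isomorphism class of elliptic curve, which I would cite rather than reprove; everything else is covering-space theory and the definition of the braid group. As an independent check — and an alternative, more computational route — one can argue algebraically: both $G$ and $Br_3$ are central extensions of $SL_2(\bZ)$ by $\bZ$, the latter via the surjection $\sigma_1\mapsto\left(\begin{smallmatrix}1&1\\0&1\end{smallmatrix}\right)$, $\sigma_2\mapsto\left(\begin{smallmatrix}1&0\\-1&1\end{smallmatrix}\right)$ whose kernel is the index-two subgroup of the centre $\langle(\sigma_1\sigma_2)^3\rangle$ of $Br_3$; one then lifts the two unipotents canonically to $\widetilde{SL_2(\bR)}$, checks the braid relation holds there on the nose, and applies the five lemma, the genuine work being the winding-number computation that $(\sigma_1\sigma_2)^6$ maps to a generator of $\pi_1(SL_2(\bR))\cong\bZ$.
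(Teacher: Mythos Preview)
Your argument is correct, and in fact it goes well beyond what the paper does: the paper gives no proof at all for this theorem, attributing it simply to ``(Wikipedia)'' and immediately moving on to use it. So there is no ``paper's own proof'' to compare against.

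That said, your topological route --- identifying $G$ with $\pi_1\bigl(SL_2(\bR)/SL_2(\bZ)\bigr)$ via deck transformations, then recognizing that quotient as the space of lattices and hence (via the period map) as the discriminant complement for cubics, whose fundamental group is $Br_3$ --- is the standard and cleanest conceptual proof. Your alternative algebraic check (lifting the two unipotent generators, verifying the braid relation, and applying the five lemma to the two central extensions) is also valid and in some ways closer in spirit to how the paper actually \emph{uses} the result: Lemma~\ref{lem:sl2br3} relies precisely on knowing that $\ker(Br_3\to SL_2(\bZ))$ is generated by $(AB)^6$, which is exactly the content of your winding-number computation. If you were writing this up for the paper, the algebraic version would dovetail more directly with what follows; the topological version is more illuminating but imports more machinery (uniformization) than the application requires.
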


\noindent A pizza piece therefore could be associated an element of $Br_3$, but for practical reasons we would prefer to work with matrices instead of braids.

We will represent braids in terms of the standard braid generators in Figure \ref{fig:braidgens}.

\begin{figure}[H]
\centering
\includegraphics{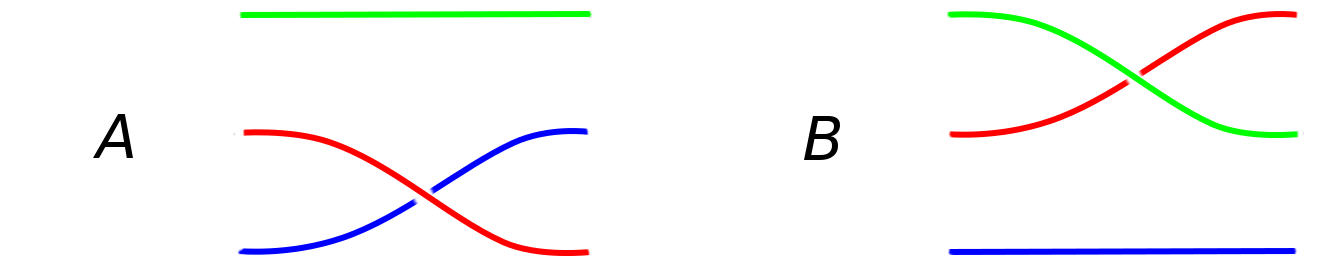}
\caption{Generators of the braid group\label{fig:braidgens}}
\end{figure}

\begin{lemma}\label{lem:sl2br3} The map $Br_3 \rightarrow SL_2(\bZ)\times \bZ$, with second factor $\mathrm{ab}$ given by abelianization, is injective.
\end{lemma}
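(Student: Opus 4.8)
The plan is to reduce the statement to a fact about the kernel of the first-factor map and then observe that the abelianization already separates points there. Write $Br_3=\langle\sigma_1,\sigma_2\mid\sigma_1\sigma_2\sigma_1=\sigma_2\sigma_1\sigma_2\rangle$, set $\Delta:=\sigma_1\sigma_2\sigma_1$ (so $\Delta^2=(\sigma_1\sigma_2)^3$ is the full twist), and let $\pi\colon Br_3\to SL_2(\bZ)$ be the first-factor map, i.e.\ the restriction to $Br_3\subset\widetilde{SL_2(\bR)}$ of the covering projection $\widetilde{SL_2(\bR)}\to SL_2(\bR)$ of Theorem~\ref{thm:braidgroup}. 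The kernel of $\pi\times\mathrm{ab}$ is $\ker\pi\cap\ker\mathrm{ab}$, so it suffices to (a) identify $\ker\pi$ and (b) check that $\mathrm{ab}$ is injective on it.

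For (a), I would use two classical facts about $Br_3$: the center is $Z(Br_3)=\langle\Delta^2\rangle$, and $\ker\pi$ — being $\ker\big(\widetilde{SL_2(\bR)}\to SL_2(\bR)\big)\cong\pi_1\big(SL_2(\bR)\big)\cong\bZ$ and hence central in the universal cover — is an infinite cyclic central subgroup of $Br_3$. Therefore $\ker\pi=\langle\Delta^{2m}\rangle$ for some $m\geq 1$. A short matrix computation with the standard realization $\sigma_1\mapsto\left(\begin{smallmatrix}1&1\\0&1\end{smallmatrix}\right)$, $\sigma_2\mapsto\left(\begin{smallmatrix}1&0\\-1&1\end{smallmatrix}\right)$ gives $\pi(\Delta)=\left(\begin{smallmatrix}0&1\\-1&0\end{smallmatrix}\right)$, so $\pi(\Delta^2)=-I\neq I$ and $\pi(\Delta^4)=I$; hence $m=2$ and $\ker\pi=\langle\Delta^4\rangle=\langle(\sigma_1\sigma_2)^6\rangle$. (If one prefers to avoid choosing a realization of $\pi$, the equality $m=2$ also follows abstractly: $m=1$ would give $Br_3/\langle\Delta^2\rangle\cong SL_2(\bZ)$, contradicting $Br_3/\langle\Delta^2\rangle\cong\bZ/2\ast\bZ/3\cong PSL_2(\bZ)$, whose abelianization $\bZ/6$ differs from $SL_2(\bZ)^{\mathrm{ab}}=\bZ/12$; and $\pi(\Delta^2)$, being central in $SL_2(\bZ)$, is $\pm I$, so $\Delta^4\in\ker\pi$, forcing $m\mid 2$.) I expect step (a) to be the main obstacle; (b) is immediate.

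For (b): the abelianization $\mathrm{ab}\colon Br_3\to Br_3^{\mathrm{ab}}\cong\bZ$ sends each $\sigma_i$ to $1$ (the braid relation is trivial in the abelianization), so $\mathrm{ab}(\Delta^4)=\mathrm{ab}\big((\sigma_1\sigma_2)^6\big)=12\neq 0$, and thus $\mathrm{ab}$ is injective on $\ker\pi\cong\bZ$. Consequently $\ker\pi\cap\ker\mathrm{ab}=\{e\}$, i.e.\ $\pi\times\mathrm{ab}\colon Br_3\to SL_2(\bZ)\times\bZ$ is injective, as claimed.
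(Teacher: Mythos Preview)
Your proof is correct and follows essentially the same approach as the paper's: identify $\ker\pi=\langle(\sigma_1\sigma_2)^6\rangle$ and observe $\mathrm{ab}$ sends this generator to $12$. The only difference is that the paper simply asserts the description of $\ker\pi$, whereas you supply a justification via centrality in $\widetilde{SL_2(\bR)}$ and the computation $\pi(\Delta^2)=-I$.
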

\begin{proof}
The kernel of the map $Br_3\to SL_2(\bZ)$ is generated by the ``double full twist'' braid $(AB)^6$, while $\mathrm{ab}$ sends both generators to $1$, so $\mathrm{ab}((AB)^6)=12$.
\end{proof}

It was easy to determine the $SL(2,\bZ)$-matrix of a piece by just looking at it, but determining $\mathrm{ab}(S)$ is a little more subtle. Since abelianization is a functor, the following Lemma gives us some clues:

\begin{lemma}\label{lem:sl2zmod12}
(Example 2.5. in \cite{Ko}) The abelianization of $SL_2(\bZ)$ is $\bZ/12\bZ$. Moreover, for
\[
\begin{pmatrix}
a & b \\
c & d
\end{pmatrix}\in SL_2(\bZ),
\]
the image in $\bZ/12\bZ$ can be computed by taking
\[
\chi \begin{pmatrix}
a & b \\
c & d
\end{pmatrix}
= ((1-c^2 )(bd+3(c-1)d+c+3)+c(a+d-3))/12\bZ.
\]
\end{lemma}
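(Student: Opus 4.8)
The plan is to establish two things: (i) $SL_2(\bZ)^{\mathrm{ab}}\cong\bZ/12\bZ$, and (ii) the polynomial $\chi$ computes the abelianization map. Point (i) is already implicit in the proof of Lemma~\ref{lem:sl2br3}: writing $SL_2(\bZ)=Br_3/\langle(AB)^6\rangle$ and abelianizing, $Br_3^{\mathrm{ab}}\cong\bZ$ with both braid generators going to $1$, so $(AB)^6\mapsto 12$ and $SL_2(\bZ)^{\mathrm{ab}}\cong\bZ/12\bZ$, a generator being the class of $T:=\left(\begin{smallmatrix}1&1\\0&1\end{smallmatrix}\right)$. (Equivalently, abelianize the presentation $\langle x,y\mid xyx=yxy,\ (xyx)^4=1\rangle$, obtaining $x=y$ and $12x=0$.)

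For (ii), let $\chi\colon SL_2(\bZ)\to\bZ/12\bZ$ be the function sending $\left(\begin{smallmatrix}a&b\\c&d\end{smallmatrix}\right)$ to the class mod $12$ of $P(a,b,c,d):=(1-c^2)(bd+3(c-1)d+c+3)+c(a+d-3)$; a priori this is just a function. I claim $\chi$ is a homomorphism. Granting this, since $\chi(T)=1$ (direct substitution) generates $\bZ/12\bZ$, the map $\chi$ is surjective, hence factors through $SL_2(\bZ)^{\mathrm{ab}}=\bZ/12\bZ$ as an isomorphism sending $[T]$ to $1$; that is, $\chi$ \emph{is} the abelianization map under the normalization $[T]\mapsto 1$. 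To prove $\chi$ is a homomorphism it suffices to check $\chi(gM)=\chi(M)+\chi(g)$ for all $M\in SL_2(\bZ)$ and all $g$ in the generating set $\{T^{\pm1},(T')^{\pm1}\}$ with $T':=\left(\begin{smallmatrix}1&0\\1&1\end{smallmatrix}\right)=T^{\mathsf T}$: writing a general element as a word in these four matrices and inducting on word length gives $\chi(g_1\cdots g_k N)=\chi(N)+\sum_i\chi(g_i)$, and then taking $N=I$ (with $\chi(I)=0$) upgrades this to $\chi(MN)=\chi(M)+\chi(N)$.

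The per-generator identities reduce to short congruences using $ad-bc=1$. Left multiplication by $T$ replaces $\left(\begin{smallmatrix}a&b\\c&d\end{smallmatrix}\right)$ by $\left(\begin{smallmatrix}a+c&b+d\\c&d\end{smallmatrix}\right)$, and expanding $P$ gives $\chi(TM)-\chi(M)-1\equiv-(c^2-1)(d^2-1)\pmod{12}$, which vanishes because $ad-bc=1$ forbids $c,d$ from being both even (so $8$ divides one of $c^2-1,d^2-1$) and from being both divisible by $3$ (so $3$ divides one of them). Right multiplication by $T$ sends $M$ to $\left(\begin{smallmatrix}a&a+b\\c&c+d\end{smallmatrix}\right)$ and, after reducing with $ad-bc=1$, gives $\chi(MT)-\chi(M)-1\equiv-c(c-1)(c+1)(a+2b+3c-3)\pmod{12}$, which vanishes because $(c-1)c(c+1)$ is divisible by $6$ and, when $c$ is even, $ad-bc=1$ forces $a$ odd, so $a+2b+3c-3$ is even. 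The inverses $T^{-1}$ on either side are the same computations with a sign, and the generators $(T')^{\pm1}$ then cost nothing once we record the anti-invariance $\chi(M^{\mathsf T})=-\chi(M)$, i.e.\ $P(a,c,b,d)\equiv-P(a,b,c,d)\pmod{12}$ on $SL_2(\bZ)$: for instance $\chi(T'M)=\chi\big((M^{\mathsf T}T)^{\mathsf T}\big)=-\chi(M^{\mathsf T}T)=-(\chi(M^{\mathsf T})+1)=\chi(M)-1=\chi(M)+\chi(T')$.

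I expect the anti-invariance $P(a,c,b,d)\equiv-P(a,b,c,d)\pmod{12}$ (or, if one avoids the transpose, a direct check of $\chi(SM)-\chi(M)\equiv-3$ for $S=\left(\begin{smallmatrix}0&-1\\1&0\end{smallmatrix}\right)$) to be the only genuine obstacle: it is an elementary polynomial congruence in $a,b,c,d$ subject to $ad-bc=1$, but verifying it by hand means expanding several cubic terms and repeatedly substituting the determinant relation, and is most reliably confirmed by a short symbolic computation. Since the formula is classical and appears as Example~2.5 of \cite{Ko}, it is legitimate to take it from there; the remaining ingredients — the abelianization count and the reductions above — are short.
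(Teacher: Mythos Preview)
The paper does not actually prove this lemma --- it simply cites Example~2.5 of \cite{Ko} --- so there is no ``paper's proof'' to compare against; your proposal supplies an argument where the paper gives none.

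Part (i) is correct and, as you note, is essentially a rewording of Lemma~\ref{lem:sl2br3}. For (ii), your strategy --- verify $\chi(gM)=\chi(M)+\chi(g)$ for $g$ ranging over a generating set, then bootstrap to arbitrary products --- is sound, and the two identities you actually carry out (left and right multiplication by $T$) are correct: the reductions to $(c^2-1)(d^2-1)\equiv 0$ and $c(c^2-1)(a+2b+3c-3)\equiv 0\pmod{12}$, together with the coprimality/parity arguments, are right. The device of proving the right-$T$ identity so that the $T'$-case can be deduced via transpose is a nice touch.

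The one genuine gap --- which you yourself flag --- is the transpose anti-invariance $P(a,c,b,d)\equiv -P(a,b,c,d)\pmod{12}$ (equivalently, the $\chi(SM)\equiv\chi(M)-3$ identity). Without it you have not handled the generator $T'$ (or $S$), so $\chi$ is not yet shown to be a homomorphism; your $T$- and $MT$-identities alone do not reach it. Deferring precisely this step back to \cite{Ko} is somewhat circular, since that citation \emph{is} the paper's entire proof. The identity is true and its verification is purely mechanical (expand, substitute $ad=1+bc$ to eliminate one variable, reduce cubics via $n^3\equiv n\pmod 6$, and split into parity cases for the remaining factor of $4$), just longer than your $T$-computations. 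With that hole plugged your argument is complete and considerably more informative than what the paper offers.
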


So from the matrix of a piece $S$, we can determine $\mathrm{ab}(S)\mod 12$. To figure out the exact value, we notice that if one can build a pizza from the given sequence of pieces, then we must have $\sum_S\mathrm{ab}(S)\equiv 0\mod 12$. If one further insists that the pizza should be ``single-layered'', then we must have $\sum_S\mathrm{ab}(S)=12$. So for instance, the existence of the two pizzas in Figure \ref{fig:eqvtbruhatlases} implies that $\mathrm{ab}(\bP^1\times \bP^1)=3$ and $\mathrm{ab}(X^{s_1s_2})=4$ where $X^{s_1s_2}\in SL_2{\bC}$ is a Schubert variety. Then we can use the list of pizzas (section \ref{sec:listofpizzas}) to figure out the values of the other pieces.

\begin{definition}\label{nutritive} For a piece $S\in Br_3$, define the \textbf{nutritive value} $\nu(S)$ of $S$ as the rational number $\frac{m}{12}$ where $m=\mathrm{ab}(S)$.
\end{definition}

Now we can make sure that our pizza is bakeable in a conventional oven by requiring that $\sum_S\nu(S)=\frac{12}{12}$. This (almost) reduces this part of the classification to a finite problem.

\subsection{Pizza pieces}\label{sec:pieces}
It follows from Definition \ref{def:KLatlas} that the pieces of the pizza (c.f. Definition \ref{def:pizza}) must be moment polytopes of Richardson surfaces in $H$. We will use the shorthand $X_v^w=X_v\cap X^w$ for Richardson varieties. To obtain a classification, we would like to list all the isomorphism types of moment polytopes of Richardson surfaces in arbitrary Kac-Moody groups. We will need the following strengthening of a special case of Corollary 3.11. of \cite{D}:
\begin{proposition}\label{thm:richardsonquads}
The moment polytope of a Richardson surface in any $H$ is part of the X-ray of the moment polytope (with possibly not the $V(\rho)$-embedding) of a flag manifold of a rank $2$ Kac-Moody group.
\end{proposition}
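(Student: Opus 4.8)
The plan is to reduce the whole statement to the combinatorics of a single height–two Bruhat interval and the dihedral reflection subgroup it generates. Since a Richardson variety $X^w_v\subseteq H/B_H$ (when nonempty) is irreducible of dimension $\ell(w)-\ell(v)$, a Richardson surface forces $\ell(w)=\ell(v)+2$, so by the diamond property of Bruhat order the interval $[v,w]\subseteq W_H$ is a diamond $[v,w]=\{v\lessdot u_1,u_2\lessdot w\}$. With respect to $\mathcal{O}(\rho)$ (where $\rho$ is $W_H$-regular), the $T$-fixed points of $X^w_v$ are $x_v,x_{u_1},x_{u_2},x_w$, with moment images $v\rho,u_1\rho,u_2\rho,w\rho$, so $\Phi(X^w_v)=\mathrm{conv}\{v\rho,u_1\rho,u_2\rho,w\rho\}$. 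Recalling that a $T$-stable curve joins $x_a$ to $x_b$ precisely when $ab^{-1}$ is a reflection, the curves contained in $X^w_v$ joining $x_v$ to $x_{u_i}$ and $x_{u_i}$ to $x_w$ give four segments forming the $4$-cycle $v\rho,u_1\rho,w\rho,u_2\rho$; a short check (a product of two reflections in a dihedral group is never a reflection) shows there is no $T$-curve $x_vx_w$ or $x_{u_1}x_{u_2}$, so $\Phi(X^w_v)$ is genuinely a quadrilateral whose four edges are exactly these $T$-curve images.

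Next I would isolate the rank–two data. Write $u_i=r_i v$ with $r_i$ the reflection realizing the cover $v\lessdot u_i$, and $w=t_iu_i=t_ir_iv$ with $t_i$ the reflection realizing $u_i\lessdot w$, so that $\phi:=wv^{-1}\in W_H$ has the two length–two reflection factorizations $\phi=t_1r_1=t_2r_2$. I would then invoke the standard fact from the theory of reflection subgroups (which is exactly the phenomenon behind \cite{D}, Cor.\ 3.11) that all reflections appearing in length–two factorizations of a fixed element lie in a single dihedral reflection subgroup $W':=W'_\phi\subseteq W_H$; in particular $r_1,r_2,t_1,t_2\in W'$ and $\{v,u_1,u_2,w\}\subseteq W'v$, a single left coset. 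Choose real roots $\beta_1,\beta_2$ of $H$ whose reflections generate $W'$. Because real roots of a Kac–Moody group are crystallographic, $W'$ has order $2m$ with $m\in\{2,3,4,6,\infty\}$, and there is a rank–two Kac–Moody group $H'$ with Weyl group $W'$, simple roots $\beta_1,\beta_2$, generalized Cartan matrix having off-diagonal entries $\langle\beta_1,\beta_2^\vee\rangle,\langle\beta_2,\beta_1^\vee\rangle$, and weight lattice containing $\beta_1,\beta_2$ (one may take its torus to be the rank–two subtorus of $T_H$ cut out by $\beta_1,\beta_2$).

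Finally I would transport the quadrilateral into $H'/B_{H'}$. Since $\rho$ is $W_H$-regular it is $W'$-regular, hence $v\rho$ is $W'$-regular, and I pick $z\in W'$ with $\lambda':=z(v\rho)$ dominant for $H'$; then $\lambda'$ is regular dominant but in general not $\rho_{H'}$, which accounts for the ``possibly not the $V(\rho)$-embedding'' clause. Applying $z$ sends the four weights to $\{\lambda',\,r_1'\lambda',\,r_2'\lambda',\,\phi'\lambda'\}$ with $r_i':=zr_iz^{-1}$ and $\phi':=z\phi z^{-1}$ in $W'$; and $z$, being an element of $W_H$ that preserves the rank–two sublattice $\bZ\beta_1+\bZ\beta_2$, acts on the affine plane of these weights by an element of $GL_2(\bZ)$ together with a translation, so the transported quadrilateral is equivalent to $\Phi(X^w_v)$ in the sense of Definition \ref{def:pizza}. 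Its four vertices lie in $W'\lambda'\subseteq\Phi(H'/B_{H'})=\mathrm{conv}(W'\lambda')$, and consecutive vertices differ by a reflection of $W'$ — by $r_i'$ for the pairs $\{\lambda',r_i'\lambda'\}$ and by $\phi'(r_i')^{-1}=t_i'\in W'$ for the pairs $\{r_i'\lambda',\phi'\lambda'\}$ — so each of its four edges is the moment image of a $T'$-stable curve in $H'/B_{H'}$. Thus, up to the equivalence of Definition \ref{def:pizza}, $\Phi(X^w_v)$ is a sub-quadrilateral bounded by $X$-ray walls of $\Phi(H'/B_{H'})$, i.e.\ part of that $X$-ray, which is what the proposition asserts.

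The step I expect to be the main obstacle is the second one: pinning down precisely that every reflection relevant to the $T$-curves of $X^w_v$ lies in the single dihedral subgroup $W'_\phi$ and that the four fixed points occupy one coset of it — this is exactly where the ``rank two'' behaviour occurs, and it requires care with left/right reflection conventions, with the possibility of extra $T$-stable curves, and with the crystallographic restriction on $m$. The remaining bookkeeping (regularity of $\rho$, moving into the dominant chamber of $H'$, $GL_2(\bZ)$-equivariance of the transport) is routine.
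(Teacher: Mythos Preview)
Your argument is correct, but it takes a substantially more elaborate route than the paper's. You work on the Coxeter-combinatorial side: you record the four left-reflections $r_1,r_2,t_1,t_2$ of the diamond $[v,w]$, invoke the fact that they all lie in one dihedral reflection subgroup $W'$ (true for any $\phi\neq e$ of reflection length two, by looking at the fixed hyperplane of $\phi$ in the geometric representation), then build an abstract rank-two Kac--Moody group $H'$ with Weyl group $W'$ and carefully transport the quadrilateral into $H'/B_{H'}$ via a dominant-chamber trick. All of that works, modulo the small imprecision that your ``standard fact'' as literally phrased fails for $\phi=e$; here $\phi\neq e$ so no harm is done.

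The paper dispenses with this machinery via a single geometric observation. Writing the covers as $v\lessdot r_\alpha v\lessdot r_\beta r_\alpha v=w$ and labeling the four edges of the moment quadrilateral by $v(\alpha),v(\beta),v(\gamma),v(\delta)$, the mere fact that the polytope is two-dimensional forces these four edge-directions---and hence $\alpha,\beta,\gamma,\delta$ after applying the linear map $v^{-1}$---to span a plane in $\fh^*$. Intersecting the root system of $H$ with that plane yields a rank-two root system whose group is realized \emph{inside} $H$ as $H'=Z_H(\ker\alpha\cap\ker\beta)$, so the quadrilateral already sits in $H'/B_{H'}$'s moment polytope with no transport step needed. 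In short, the step you flag as the main obstacle---that the four reflections generate a dihedral group---becomes, on the moment-map side, the tautology that four vectors lying in a plane span at most a plane. Your approach buys a little extra structural information (the explicit coset picture in $W'$ and the crystallographic restriction on $m$), but at the cost of invoking reflection-subgroup theory where a dimension count suffices.
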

\begin{proof}
Let $X^w_v$ be a Richardson surface in $H$. We know that $v\lessdot r_\alpha v \lessdot r_\beta r_\alpha v=w$ for some positive roots $\alpha, \beta$. The moment polytope of $X^w_v$ is a quadrilateral with edge labels:
\begin{figure}[H]
\centering
\includegraphics{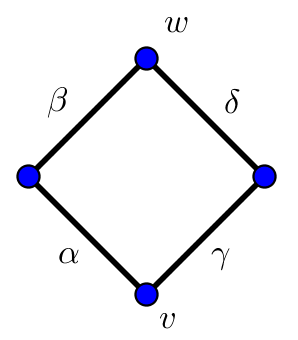}
\end{figure}
\noindent We claim that $\gamma, \delta\in \mathrm{Span}_\bR (\alpha, \beta)$. Since the polytope is $2$-dimensional, and $\{ v(\alpha), v(\beta) \}$ is linearly independent, we know that $v(\gamma), v(\delta) \in \mathrm{Span}_\bR(v(\alpha), v(\beta))$, and $v$ is a linear transformation. Therefore all roots that are labeling the edges of this quadrilateral lie in a $2$-dimensional subspace of $\fh^*$, so if we intersect the root system of $H$ with the $2$-plane $\mathrm{Span}_\bR (\alpha, \beta)$, we obtain a rank $2$ root system with corresponding Kac-Moody group $H^\prime=Z_H(\ker \alpha \cap \ker \beta)$. Then, up to the equivalence relation in definition \ref{def:pizza}, $X^w_v$'s polytope will appear in $H^\prime/B_{H^\prime}$. 
\end{proof}

It remains to look for moment quadrilaterals in all rank $2$ Kac-Moody groups. The (bottom of the) moment polytope of $\widetilde{A_1}$ is:
\begin{figure}[H]
\centering
\includegraphics{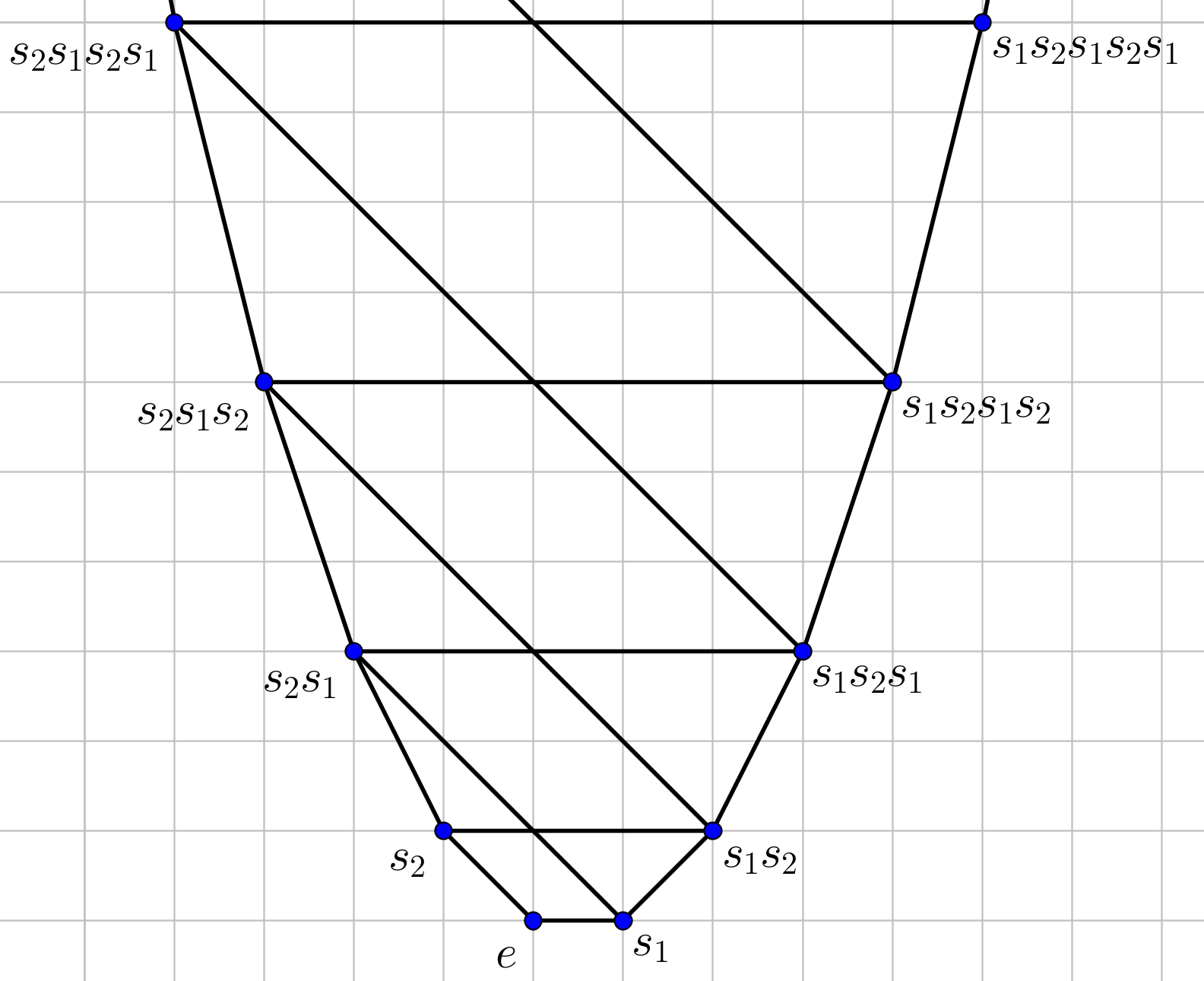}
\end{figure}
There are only a couple of types of quadrilaterals to check here:
\begin{enumerate}
\item The two Schubert surfaces $X_e^{s_1s_2}$ and $X_e^{s_2s_1}$ are smooth, and they appear in $B_2$.
\item Those of the form $X_{s_1v}^{s_1w}$ or $X_{s_2v}^{s_1w}$ are all singular, as the primitive vectors from the top right vertex are $\begin{pmatrix} -1 \\ 0 \end{pmatrix}$ and $\begin{pmatrix} -1 \\ -k \end{pmatrix}$ for $k\geq 2$.
\item Those of the form $X_{s_2v}^{s_2w}$ or $X_{s_1v}^{s_2w}$  are all singular, as their top left vertex will have primitive vectors $\begin{pmatrix} 1 \\ -1 \end{pmatrix}$ and $\begin{pmatrix} 1 \\ -k \end{pmatrix}$ for $k\geq 3$.
\end{enumerate}

A similar situation arises in the Kac-Moody groups arising from the generalized Cartan matrix $\begin{pmatrix} 2 & -1 \\ -k & 2 \end{pmatrix}$, and, more generally $\begin{pmatrix} 2 & -j \\ -i & 2 \end{pmatrix}$. The only smooth Richardson surfaces are the Schubert surfaces, and they are of the form (either the red or the yellow vertex must be in the center):
\begin{figure}[H]
\caption{$KM(k)$}
\centering
\includegraphics{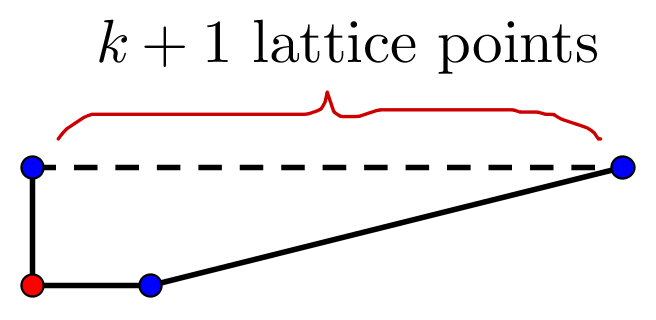}
\end{figure}

To find the nutritive value of $KM(k)$ for $k\geq 4$, note the difference between the pieces $KM(k)$ and $KM(k+1)$:
\begin{figure}[H]
\centering
\includegraphics{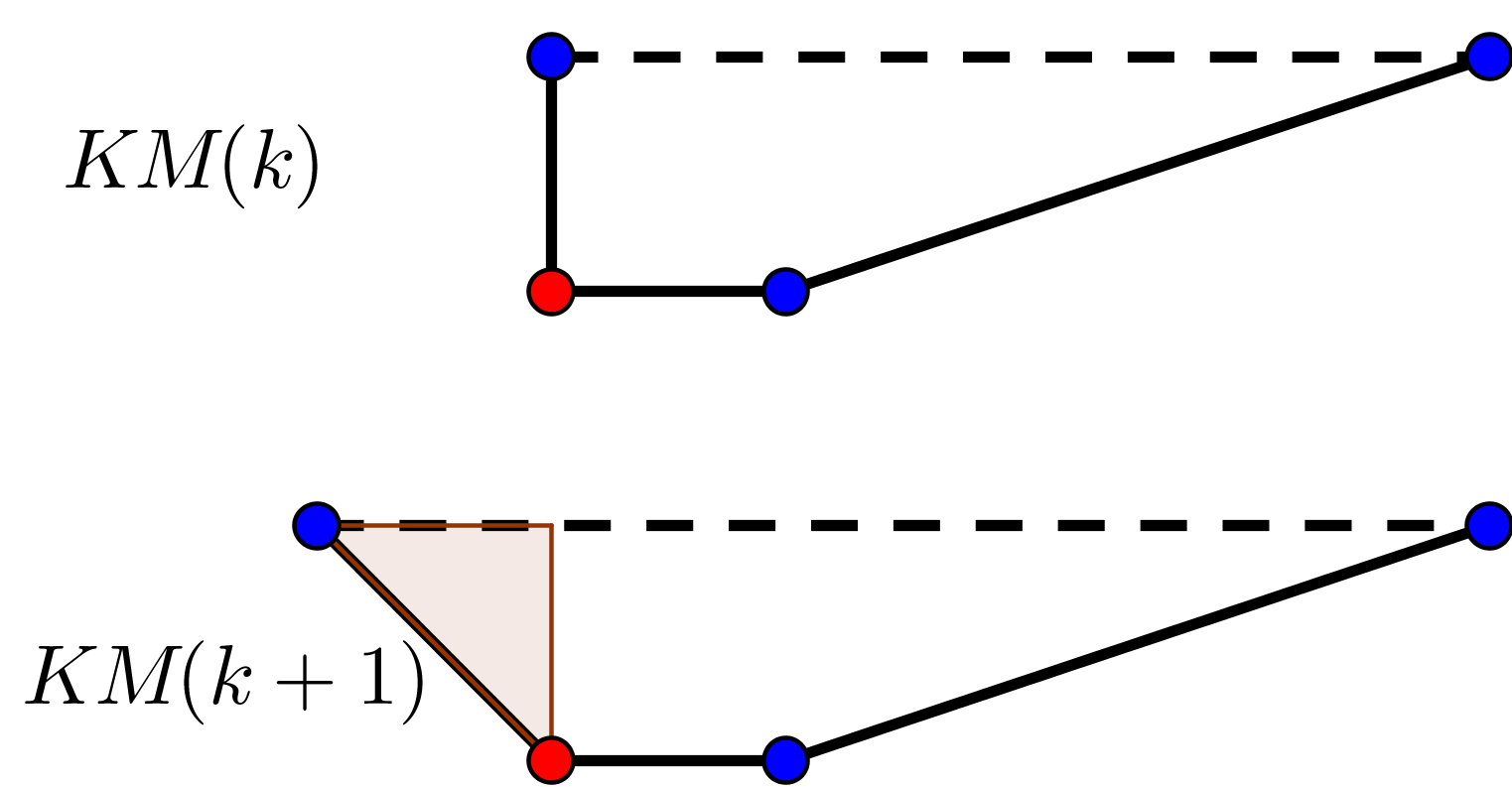}
\end{figure}
\begin{lemma}\label{lem:KMk} The correct lift to the braid group of the pizza piece $KM(k)$ is $B^kABA$ in terms of the standard braid generators in Figure \ref{fig:braidgens}.
\end{lemma}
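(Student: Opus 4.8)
The plan is to pin down the lift via the injection $Br_3\hookrightarrow SL_2(\bZ)\times\bZ$ of Lemma~\ref{lem:sl2br3}: it is enough to check that $B^kABA$ and the genuine braid lift of $KM(k)$ have the same image in $SL_2(\bZ)$ and the same abelianization. I would obtain both from a small base case together with an inductive step read off from the comparison figure above.

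For the inductive step, I would argue that the figure exhibits $KM(k+1)$ as the piece $KM(k)$ with one extra elementary slice attached along the edge through the centre of the pizza. Putting that slice in standard position, its associated matrix is the shear $\left(\begin{smallmatrix}1&0\\-1&1\end{smallmatrix}\right)$, which is exactly the image of the braid generator $B$ of Figure~\ref{fig:braidgens}. By the ``attach a piece clockwise'' rule $(M,\gamma),(N,\mu)\mapsto(MN,\gamma\circ M(\mu))$, inserting this slice multiplies the braid lift by $B$; since the straight-line path of $B$ misses the origin (as already checked for every piece on the list in section~\ref{sec:pieces}), the path concatenation is unambiguous, so the winding class changes by exactly that of $B$ and the abelianization goes up by exactly $1$. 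Hence the lift of $KM(k+1)$ equals $B$ times the lift of $KM(k)$.

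To anchor the induction I would take a small $k$ where the piece is already understood---e.g. $k=1$, where $KM(1)$ is the Hirzebruch surface $X^{s_1s_2}$ whose abelianization $4=1+3$ is forced by the two pizzas of Figure~\ref{fig:eqvtbruhatlases} (one could equally start from $\bC\bP^1\times\bC\bP^1$ with abelianization $3$). I would read the $SL_2(\bZ)$-matrix of $KM(1)$ off its moment polytope, check it agrees with the image of $B^1ABA$ under the generators of Figure~\ref{fig:braidgens}, and conclude by Lemma~\ref{lem:sl2br3} that the base-case lift is $B^1ABA$; feeding in the inductive step then gives $B^kABA$ for every $k$. As a built-in sanity check, $\chi$ of the matrix of $KM(k)$ (Lemma~\ref{lem:sl2zmod12}) should come out congruent to $k+3$ modulo $12$, matching $\mathrm{ab}(B^kABA)=k+3$.

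The main obstacle is the last point: the $SL_2(\bZ)$-matrix is blind to the central full twist $(AB)^6$, so recovering the actual integer $\mathrm{ab}$ rather than just its residue mod $12$ genuinely needs the planar/winding-number input, and I would have to be careful that the extra slice in the comparison figure contributes a single braid generator and not a generator composed with some stray full twist.
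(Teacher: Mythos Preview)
Your approach is correct but takes a different route from the paper. The paper's proof decomposes $KM(k)$ into a sequence of pieces already on the tabulated list, namely $A_2^{\text{opp}}b,(B_2^{\text{opp}}b)^{k-1},A_2^{\text{opp}}b$, and then multiplies out their known braids
\[
BA\cdot(BAB^{-1})^{k-1}\cdot BA = B^{k}ABA,
\]
using a slicing picture to justify that the ``extra'' region between $KM(k+1)$ and this sequence is irrelevant to the braid. Your induction instead factors as $KM(k+1)=B\cdot KM(k)$ directly. Both arguments rest on a geometric slicing; the paper's has the advantage that every constituent piece already has its lift pinned down in the table, so nothing new needs to be justified about paths, whereas yours is shorter and more transparent once the single ``multiply by $B$'' step is secured.

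One point to tighten: your phrase ``attached along the edge through the centre of the pizza'' is not quite what the comparison figure shows---the two spokes of $KM(k)$ and $KM(k+1)$ coincide in standard position (both send $\left(\begin{smallmatrix}1\\0\end{smallmatrix}\right)$ to $\left(\begin{smallmatrix}0\\-1\end{smallmatrix}\right)$), so the extra region sits on the crust side, not along a spoke. What you actually need is the path check: the lift of $KM(k)$ is by definition the straight segment $(1,0)\to(0,-1)$, while the path of $B\cdot KM(k)$ is $(1,0)\to(1,-1)\to(0,-1)$; both live in the simply connected region $\{x\ge 0,\,y\le 0\}\setminus\{0\}$, hence are homotopic rel endpoints. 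Combined with the matrix identity $\left(\begin{smallmatrix}1&0\\-1&1\end{smallmatrix}\right)\left(\begin{smallmatrix}0&1\\-1&-k\end{smallmatrix}\right)=\left(\begin{smallmatrix}0&1\\-1&-(k+1)\end{smallmatrix}\right)$ and Lemma~\ref{lem:sl2br3}, this closes your inductive step cleanly, with no possibility of a stray full twist---exactly the worry you flagged at the end.
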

\begin{proof}
We will show this by showing that the pizza piece $KM(k)$ (as a braid) is equivalent to a sequence of pieces, whose lifts we already know. Let $S(k)=A_2^{opp}b,B_2^{opp}b,\ldots ,B_2^{opp}b$ (with $k-1$ $B_2^{opp}b$s). We claim that the piece $KM(k)$ is equivalent to the sequence of pieces $S(k),A_2^{opp}b$. We will induct on $k$. Since $KM(1)$ is the Schubert variety in $A_2$'s flag manifold whose lift is $BABA$, and the lift of $A_2^{opp}b$ is $BA$, the base case holds. We may slice the piece $KM(k+1)$ as the following picture suggests:
\begin{figure}[H]
\centering
\includegraphics[width=\textwidth]{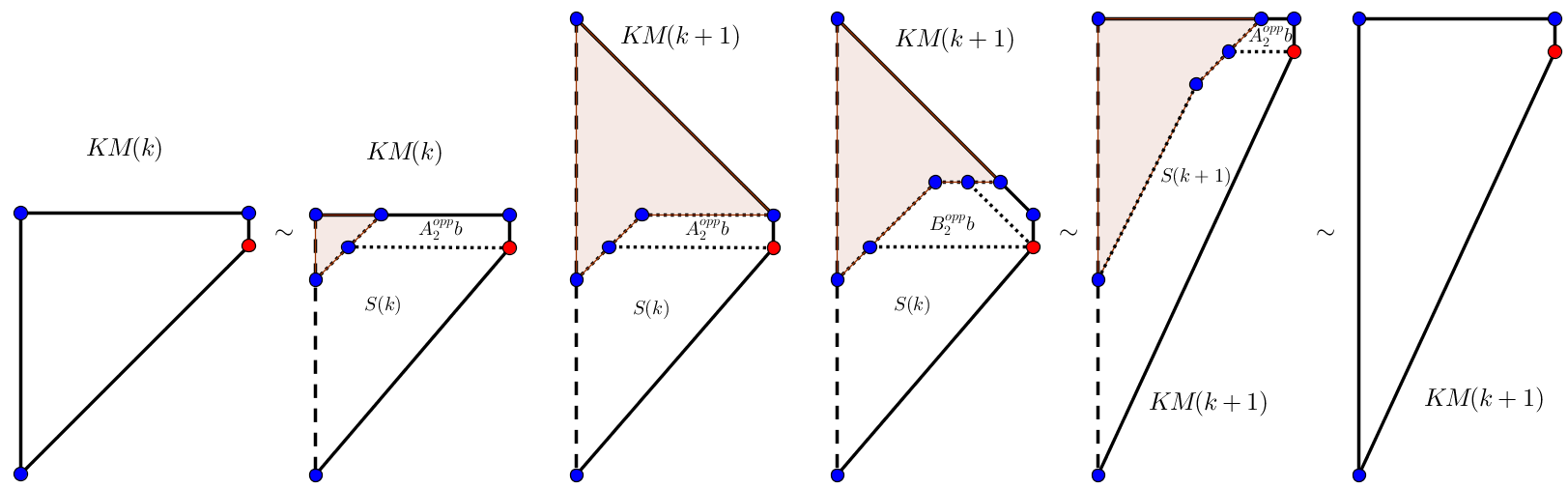}
\end{figure}
Note that the shaded region is the only difference between $KM(k+1)$ and the sequence $S(k),B_2^{opp}b,A_2^{opp}b$, and it is irrelevant to how this piece, or the sequence of pieces fits into a pizza. In terms of braids, this means that in $Br_3$, $KM(k+1)$ lifts to the same element as $S(k),B_2^{opp}b,A_2^{opp}b=A_2^{opp}b, (B_2^{opp}b)^{k}) ,A_2^{opp}b$, which is
\[
BA*(BAB^{-1})^k*BA=B^kABA.
\]
Note that this implies that for $k\geq 1$, $\nu(KM(k))=\frac{k+3}{12}$, in particular, the pieces $KM(k)$ for $k\geq 10$ can never be part of a pizza for nutritional reasons.
\end{proof}

Therefore since we want $M$ smooth, we may start with rank $2$ finite type groups, and look at all the equivalence classes of polytopes of Richardson surfaces there, including the infinite family above, then add $KM(k)$ for $k=4,\ldots, 9$ to the list ($KM(10)$ is more nutritious than a whole pizza). Below we give a table of the Richardson quadrilaterals together with the corresponding matrices in $SL(2,\bZ)$. Note that if a piece has matrix $A$ then the piece backwards (i.e. reflected across the $y$-axis) has matrix
\[
\begin{pmatrix}
1 & 0 \\
0 & -1
\end{pmatrix}
A^{-1}
\begin{pmatrix}
1 & 0 \\
0 & -1
\end{pmatrix}
\]
In the table the center is always the bottom left vertex (in red). In case of non-simply-laced groups, $s_1$ is always the reflection across the short root. We display the smallest (by edge-length) pieces, but will consider pieces up to equivalence by the equivalence relation in Definition \ref{def:pizza}. We write the braid in terms of the generators in Figure \ref{fig:braidgens}.

\begin{tabular}{ | c | c | c | c | c | m{4cm} |}
\hline
name & Richardson surface & $SL(2,\bZ)$ matrix & braid & $\nu$ & Richardson quadrilateral \\
\hline
$A_1\times A_1$ & $X^{s_1s_2}_e$ & $\begin{pmatrix}
0 & 1 \\
-1 & 0
\end{pmatrix}$ & $ABA$ & $\frac{3}{12}$ & \vspace{0.1cm}\includegraphics{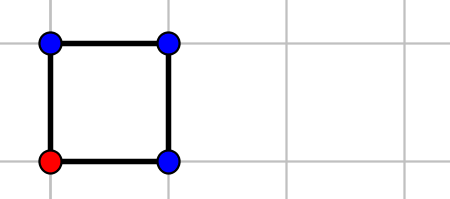} \\
\hline
$A_2$ & $X^{s_1s_2}_e$ & $\begin{pmatrix}
0 & 1 \\
-1 & -1
\end{pmatrix}$ & $BABA$ & $\frac{4}{12}$ & \vspace{0.1cm}\includegraphics{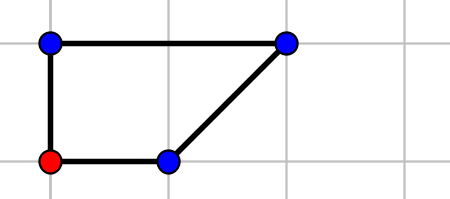} \\
\hline
$A_2^{\text{opp}}$ & $X_{s_1}^{w_0}$ & $\begin{pmatrix}
0 & 1 \\
-1 & 1
\end{pmatrix}$ & $AB$ & $\frac{2}{12}$ & \vspace{0.1cm}\includegraphics{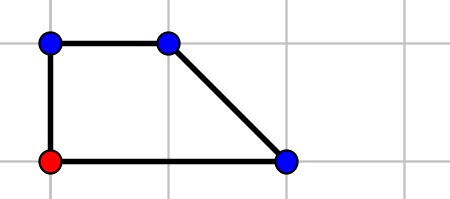} \\
\hline
$B_2$ & $X^{s_1s_2}_e$ & $\begin{pmatrix}
0 & 1 \\
-1 & -2
\end{pmatrix}$ & $BBABA$ & $\frac{5}{12}$ & \vspace{0.1cm}\includegraphics{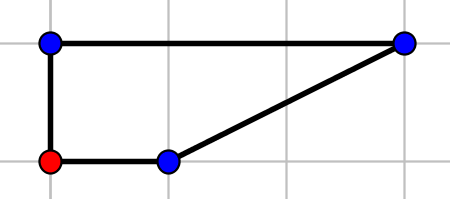} \\
\hline
$B_2^{\text{opp}}$ & $X^{w_0}_{s_1s_2}$ & $\begin{pmatrix}
0 & 1 \\
-1 & 2
\end{pmatrix}$ & $B^{-1}AB$ & $\frac{1}{12}$ & \vspace{0.1cm}\includegraphics{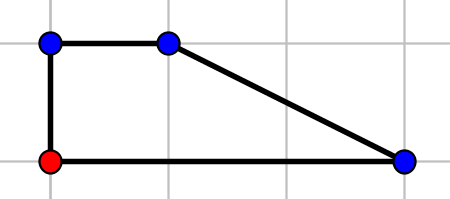} \\
\hline
$B_2^{\text{sing}}$ & $X^{s_2s_1s_2}_{s_2}$ & $\begin{pmatrix}
1 & 1 \\
-2 & -1
\end{pmatrix}$ & $BBA$ & $\frac{3}{12}$ & \vspace{0.1cm}\includegraphics{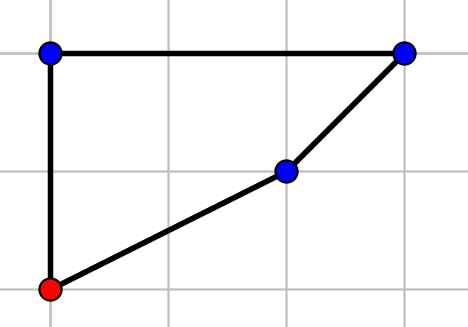} \\
\hline
$G_2$ & $X^{s_1s_2}_e$ & $\begin{pmatrix}
0 & 1 \\
-1 & -3
\end{pmatrix}$ & $BBBABA$ & $\frac{6}{12}$ & \vspace{0.1cm}\includegraphics[width=4cm]{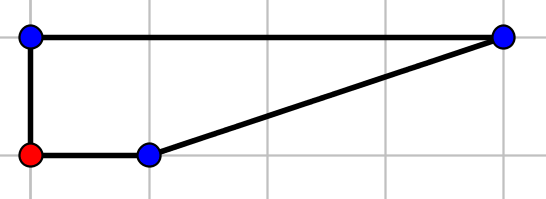} \\
\hline
$G_2^{\text{opp}}$ & $X^{w_0}_{s_1s_2s_1s_2}$ & $\begin{pmatrix}
0 & 1 \\
-1 & 3
\end{pmatrix}$ & $B^{-1}B^{-1}AB$ & $\frac{0}{12}$ & \vspace{0.1cm}\includegraphics[width=4cm]{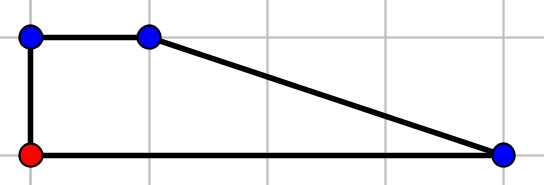} \\
\hline

\end{tabular}

\begin{tabular}{ | c | c | c | c | c | m{5cm} |}
\hline
name & Richardson surface & $SL(2,\bZ)$ matrix & braid & $\nu$ & Richardson quadrilateral \\
\hline
$KM(k)$ & $X^{s_1s_2}_e$ & $\begin{pmatrix}
0 & 1 \\
-1 & -k
\end{pmatrix}$ & $B^kABA$ & $\frac{k+3}{12}$ & \vspace{0.1cm}\includegraphics[width=5cm]{k.png} \\
\hline
$G_2^{\text{short}}$ & $X^{s_2s_1s_2}_{s_2}$ & $\begin{pmatrix}
1 & 1 \\
-3 & -2
\end{pmatrix}$ & $BBBA$ & $\frac{4}{12}$ & \vspace{0.1cm}\includegraphics[width=5cm]{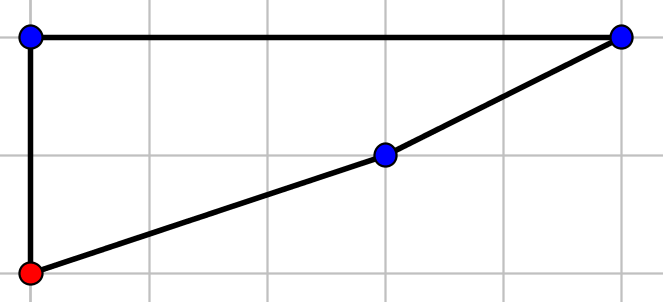} \\
\hline
$G_2^{\text{long}}$ & $X^{s_2s_1s_2s_1s_2}_{s_2s_1s_2}$ & $\begin{pmatrix}
2 & 1 \\
-3 & -1
\end{pmatrix}$ & $BBAB^{-1}$ & $\frac{2}{12}$ & \vspace{0.1cm}\includegraphics[width=5cm]{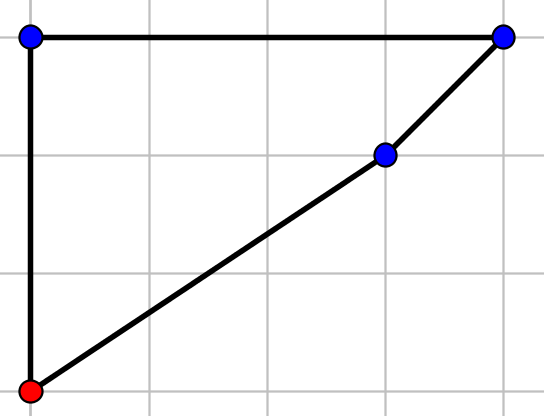} \\
\hline
\end{tabular}

\section{Simply laced pizzas}
Since the $G_2^{\text{opp}}$ piece has nutritive value $\frac{0}{12}$, it could appear arbitrarily many times in a pizza (we will find limits in section \ref{sec:nonsimplylacedpizzas}). To avoid this inconvenience, we restrict our attention to \textbf{simply-laced pizzas}, i.e. pizzas with pieces from simply laced groups only. Note that since $\widetilde{A_2}$ is simply laced and contains a subgroup $\widetilde{A_1}$, we have to include the $B_2$ piece together with the $A_1\times A_1$, $A_2$, and $A_2^{\text{opp}}$ in the list of pieces we are allowed to use.

\subsection{List of simply laced pizzas}\label{sec:listofpizzas}
Since the invariants of all allowed pieces are strictly positive, we just have to list all possible arrangements of the pieces where the nutritive values add up to $1$, and check if the resulting matrices multiply to the identity. The following list of all the $20$ inequivalent pizzas has been obtained by this brute force computation in Sage \cite{sage}:
\begin{figure}[H]
\centering
\includegraphics{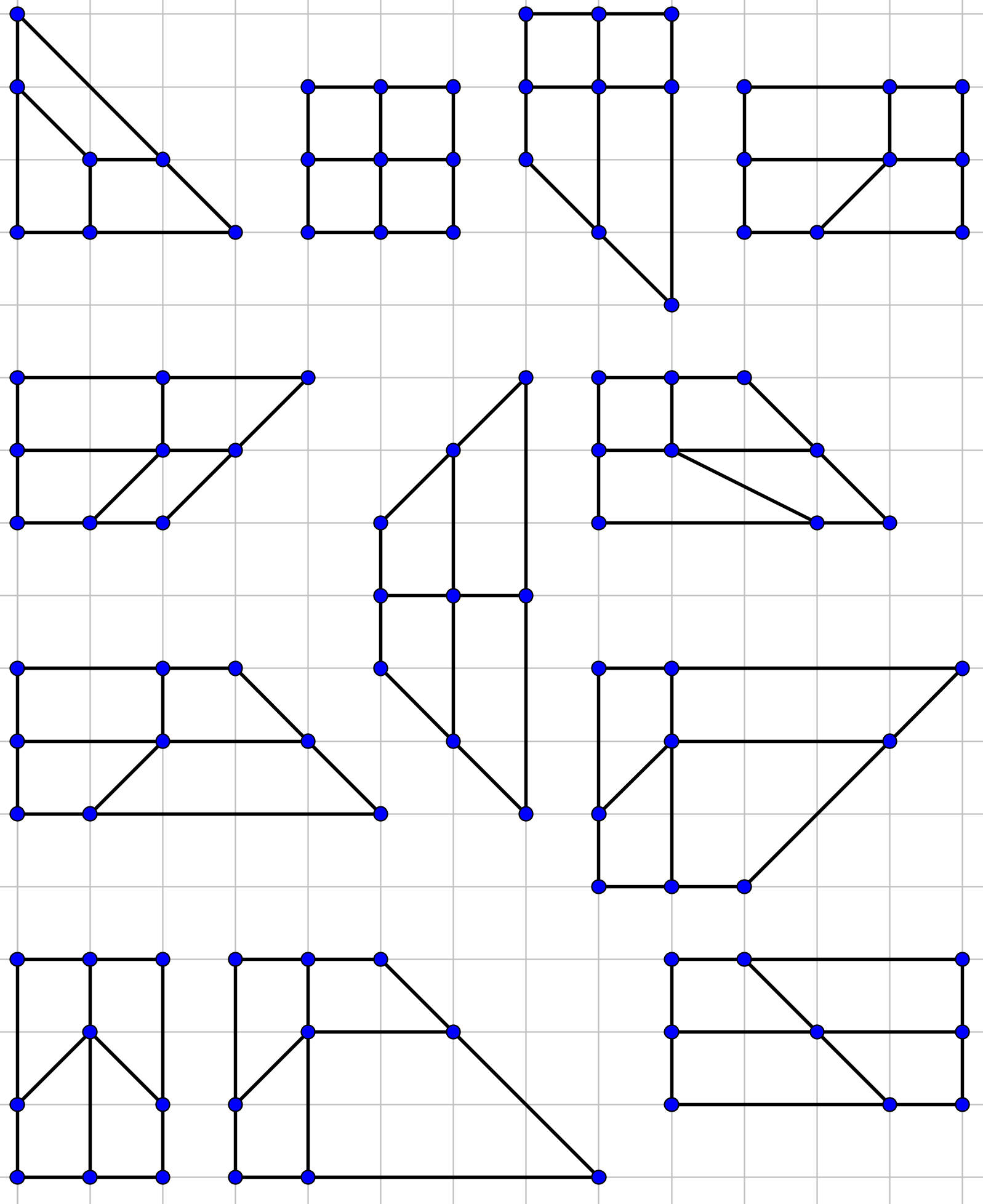}
\end{figure}
\begin{figure}[H]
\centering
\includegraphics{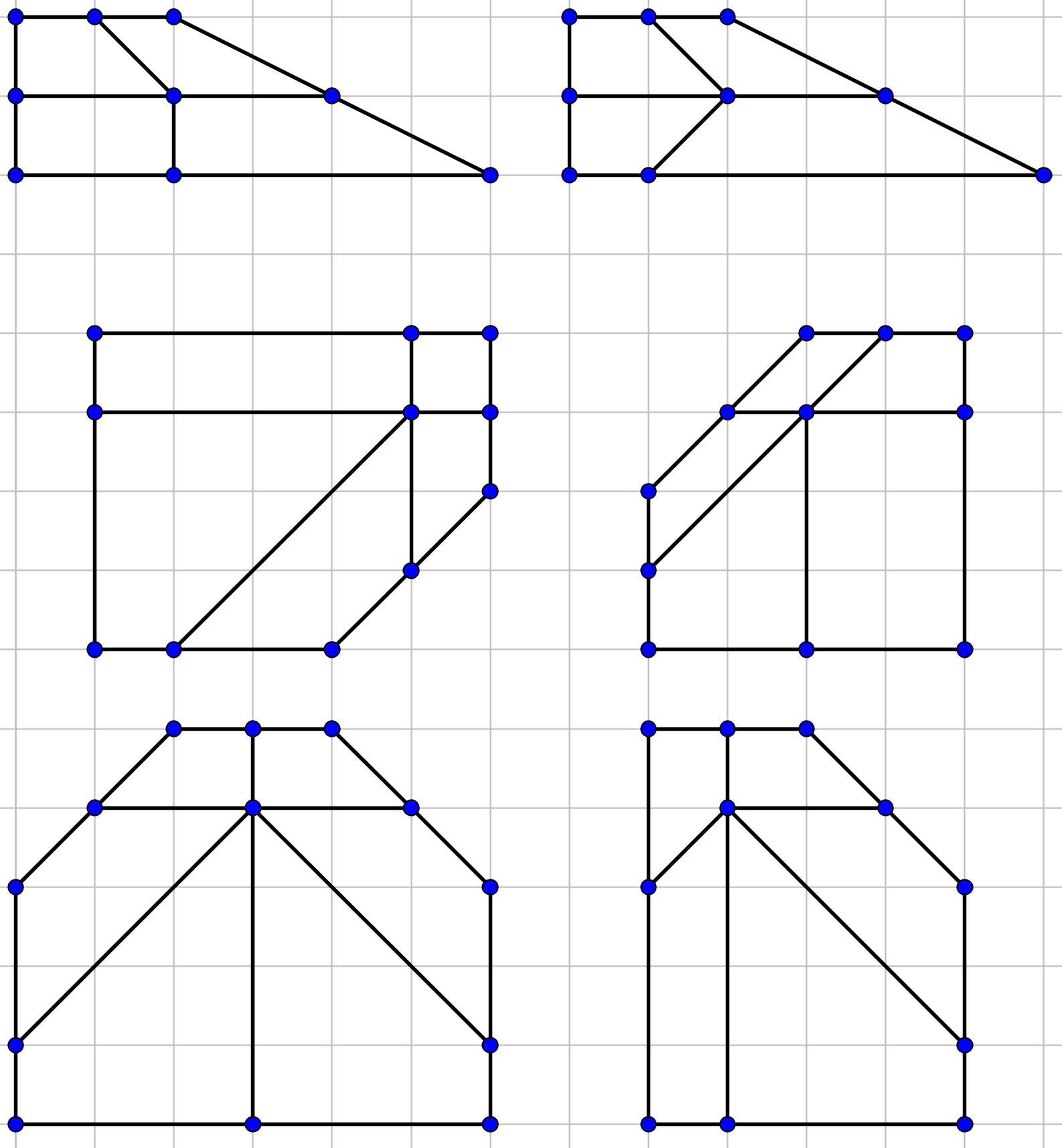}
\end{figure}
\begin{figure}[H]
\centering
\includegraphics{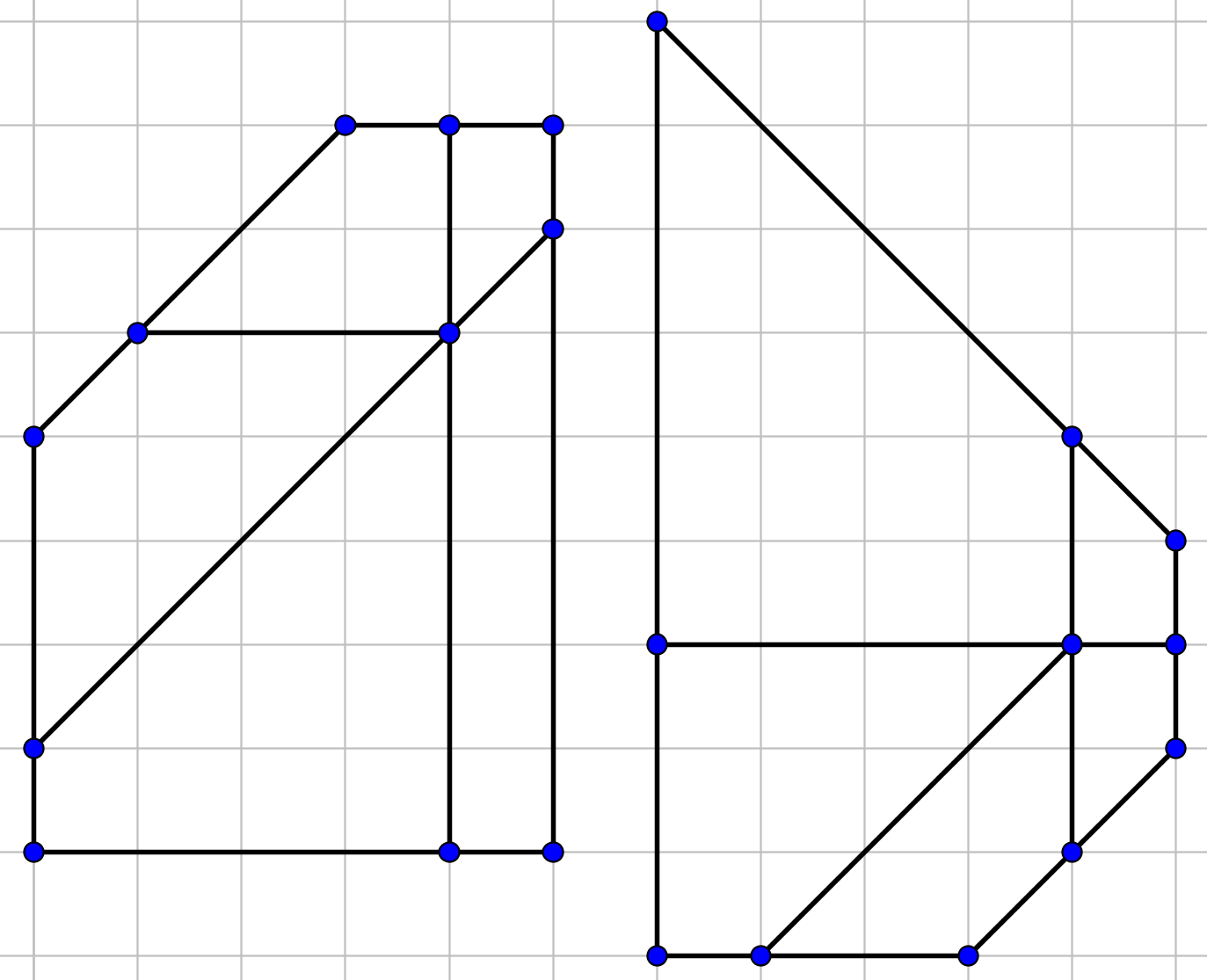}
\end{figure}

\section{Putting atlases on pizzas}\label{sec:atlasesonpizzas}
Everything we did so far was to derive some necessary conditions for part $4$ of Definition \ref{def:KLatlas} to be satisfied. To actually put a K-L atlas on a lattice pizza, we need to specify $H$ and a map $w_M$ from the vertices of the pizza to $W_H$. This is not easy, as in general, there are many choices for such an $H$ and $w_M$. For instance, if one pair $(H,w_M)$ exists, then we can take $H\times H^\prime$ and $w_M\times v$ for some constant $v\in W_{H^\prime}$. We will look for atlases that are minimal in some sense.

The map $w_M$ will label the vertices in a lattice pizza by elements of $W_H$, with the edges corresponding to covering relations in Bruhat order. All covering relations $v\lessdot w$ are of the form $vs_\beta=w$ for some positive root $\beta$ (note that this does not privilege right-multiplication, as equivalently $r_{v\cdot \beta}v=w$). We will label the edges of a pizza by the roots in the covering relations. As we remarked before, we can do this by labeling them by left- or right-multiplication.

\begin{lemma}\label{lem:leftmult} If we label the edges of $H/B_H$'s moment polytope by left-multiplication, then any two edges with identical labels are parallel.
\end{lemma}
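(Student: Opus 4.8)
The plan is to work with the moment map image of $H/B_H$ in $\fh^*_{\bR}$ and recall that its vertices are the $T$-fixed points, indexed by $W_H$, sitting at the weights $w\cdot\rho$ (using the $\cO(\rho)$-embedding), while the edges of the moment polytope correspond to the $T$-stable curves $\bP^1_{v,w}$ joining fixed points $v$ and $w=vs_\beta$ with $v\lessdot w$. For such an edge, the weight of the tangent line to $\bP^1_{v,w}$ at the fixed point $v$ is $-v\cdot\beta$ (and $+v\cdot\beta$ at $w$), so the edge vector in $\fh^*_{\bR}$, up to sign and positive scaling, is $w\cdot\rho - v\cdot\rho$, which is parallel to the coroot direction — concretely, $v\cdot\rho - vs_\beta\cdot\rho = \langle \rho,\beta^\vee\rangle\, v\cdot\beta = v\cdot\beta$ since $\langle\rho,\beta^\vee\rangle = 1$. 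Thus the edge for the covering relation $v\lessdot vs_\beta$ is, as an unoriented direction, the line $\bR\cdot (v\cdot\beta)$.

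First I would make precise what ``labeling by left-multiplication'' means: the covering relation $v\lessdot w$ with $w=vs_\beta$ is equivalently $w = r_{v\cdot\beta}\,v$, i.e. left multiplication by the reflection in the root $v\cdot\beta$; the left-multiplication label of the edge is the (positive) root $\eta := v\cdot\beta$ (or $-v\cdot\beta$, whichever is positive). Second, by the computation above, the geometric edge vector is parallel to $\eta$. Third — the key point — suppose two edges, coming from covering relations $v_1\lessdot v_1 s_{\beta_1}$ and $v_2 \lessdot v_2 s_{\beta_2}$, carry the same left-multiplication label, i.e. $v_1\cdot\beta_1 = \pm(v_2\cdot\beta_2)$ as roots. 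Then both edge vectors are parallel to the same root $\eta$, hence the two edges are parallel. That is the whole argument; the one thing to check carefully is that the edge direction genuinely is $v\cdot\beta$ and not something that also depends on $v$ in a way that could differ between the two occurrences — but this follows from $\langle\rho,\beta^\vee\rangle=1$, which is exactly why the $\cO(\rho)$ (equivalently any regular dominant weight) embedding is used, or more robustly from the fact that the $T$-weight on the tangent space $T_v\bP^1_{v,vs_\beta}$ is intrinsically $\mp v\cdot\beta$ regardless of the linearization.

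I expect the main (really the only) obstacle to be bookkeeping about conventions: making sure the identification of the edge between fixed points $v$ and $vs_\beta$ with the root $v\cdot\beta$ is stated with the right signs, and that ``parallel'' is interpreted as equality of the spanned line in $\fh^*_{\bR}$ after the $GL(2,\bZ)$-type identifications used for pizzas (Definition \ref{def:pizza}) when $H$ has rank $2$; in higher rank one simply works in the ambient $\fh^*_{\bR}$. A clean way to organize the writeup is: (i) recall the fixed-point/edge description of the moment polytope of $H/B_H$; (ii) compute the tangent weight of a $T$-stable $\bP^1$ and deduce the edge direction is $\bR\cdot(v\cdot\beta)$; (iii) observe that the left-multiplication label of that edge is exactly the root $\pm v\cdot\beta$; (iv) conclude equal labels $\Rightarrow$ equal direction $\Rightarrow$ parallel. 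No serious computation is needed beyond step (ii), which is the standard Bott–Samelson/Borel–Weil tangent-space computation.
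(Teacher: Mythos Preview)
Your argument is correct and reaches the same conclusion as the paper, but by a different route. The paper's proof is more geometric: given two edges with left-multiplication label $\beta$, it observes that the associated $T$-invariant $\bP^1$'s are orbits of the root subgroup $SL_2^\beta\subset H$, and the moment image of any $SL_2^\beta$-orbit is a segment parallel to $\beta$; hence both edges are parallel to $\beta$. You instead compute the edge vector directly as $v\cdot\rho - vs_\beta\cdot\rho = \langle\rho,\beta^\vee\rangle\,v\cdot\beta$, identify $v\cdot\beta$ with the left-multiplication label, and conclude. Both arguments are one-liners once the setup is clear; the paper's version has the advantage of not needing the explicit vertex coordinates (so it works for any equivariant embedding, not just $\cO(\rho)$), while yours makes the connection to the label more explicit and algebraic.

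One small correction: your claim that $\langle\rho,\beta^\vee\rangle = 1$ is only true when $\beta$ is simple; for a general positive root this scalar is $\mathrm{ht}(\beta^\vee)$. This does not affect the argument, since you only need the direction and you already note the more robust tangent-weight justification, but you should drop that equality from the writeup.
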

\begin{proof}
Let $v_i,w_i$ be elements of $W_H$ labeling vertices of the pizza such that $w_i=r_\beta v_i$. Since $\beta$ is a positive root, there is an associated subgroup $SL_2^\beta\cong SL_2(\bC)$ of $H$. Then the $T$-invariant $\bC\bP^1$'s with fixed points $v_i,w_i$ are the $SL_2^\beta$-orbits of the $v_i$'s. Therefore their moment polytopes (lattice line segments) are parallel to $\beta$.
\end{proof}

So we could label the edges by left-multiplication, but this is slightly redundant.

\begin{lemma}\label{lem:rightmult} If we label the edges of $H/B_H$'s moment polytope by right-multiplication, then the labeling roots are the equivariant homology classes of the corresponding invariant $\bC\bP^1$'s in $H_2^T(H/B_H)$.
\end{lemma}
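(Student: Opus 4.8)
The plan is to compute directly, in the $T$-equivariant homology $H_2^T(H/B_H)$, the class of the invariant $\bC\bP^1$ joining two fixed points $v$ and $w = v s_\beta$ related by a covering relation, and identify it with the coroot (or, under the usual identification of $H_2^T$ with a suitable lattice tensored with $H_*^T(\mathrm{pt})$, with $\beta$ as claimed). First I would recall the standard description: the $T$-fixed points of $H/B_H$ are indexed by $W_H$, and for a positive root $\beta$ the invariant curves through $v$ are the orbits $SL_2^\beta \cdot vB_H$, whose two fixed points are $vB_H$ and $v s_\beta B_H$ (here I use $SL_2^\beta$ as in the proof of Lemma~\ref{lem:leftmult}, but now pulled back along right translation by $v$ rather than conjugated into a root subgroup on the left). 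The curve $C_{v,\beta}$ is thus an honest $\bC\bP^1$, and I want the class $[C_{v,\beta}]^T \in H_2^T(H/B_H)$.

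The key computational step is to evaluate $[C_{v,\beta}]^T$ via localization: for each $T$-fixed point $u$ of $H/B_H$, restrict the class to $u$, which is nonzero only for $u \in \{v, w\}$, and there equals the reciprocal of the Euler class (the product of the remaining tangent weights) of $C_{v,\beta}$ sitting inside $H/B_H$ at that point — or, more cleanly, use the fact that an invariant $\bC\bP^1$ with weights $\pm\theta$ at its two poles represents, in $H_2^T$, the class dual to the divisor pairing giving $1$ against the weight $\theta$ and whose image in ordinary $H_2$ is the fundamental class of the curve. The torus $T$ acts on $C_{v,\beta}$ with tangent weight $v(\beta)$ at $vB_H$ and $-v(\beta)$ at $wB_H$; so as a left-equivariant statement the curve "points in the direction $v(\beta)$," which is exactly the content of Lemma~\ref{lem:leftmult} and of the moment-polytope picture. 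The point of the present lemma is that if instead we bookkeep the covering relation multiplicatively on the right — recording the positive root $\beta$ with $w = v s_\beta$ rather than the root $v(\beta) = $ (the left label) — then $\beta$ itself is the natural name of the equivariant homology class, because the assignment $\beta \mapsto [C_{v,\beta}]^T$ is precisely the standard isomorphism (Kostant–Kumar / Peterson) between the coroot lattice (spanned by the $\beta^\vee$, or by the $\beta$ after the identification we are using) and the degree-two part of $H_*^T(H/B_H)$, compatibly for all $v$ simultaneously. So I would phrase the argument as: (1) identify $C_{v,\beta} = \overline{SL_2^\beta \cdot vB_H}$; (2) recall that $H_2^T(H/B_H)$ in each Schubert cell is freely generated by these curve classes indexed by the roots $\beta$ appearing in covering relations out of the relevant fixed point; (3) observe that the GKM/moment-graph description makes the edge $v \,\text{—}\, v s_\beta$ carry exactly the label $\beta$ under right-multiplication bookkeeping, and that this label is the homology class by the Chevalley-type formula.

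The main obstacle I expect is bookkeeping the identification "root $\leftrightarrow$ equivariant homology class" precisely enough that the statement is literally true with $\beta$ (rather than $\beta^\vee$, or $v(\beta)$, or a sign) — i.e., pinning down which of the several standard conventions for $H_2^T(H/B_H)$ (as a sublattice of $\fh$, of $\fh^*$, or of the coweight lattice) is in force, and checking that under that convention the right-multiplication label coincides on the nose with the class, including the Kac–Moody subtleties (so that $SL_2^\beta$ and the curve $C_{v,\beta}$ make sense for real roots $\beta$, which is all that occurs here since every $\beta$ arises from a covering relation and is therefore real). Once the convention is fixed, everything else is the standard GKM picture of $H/B_H$ and a one-line localization computation; the content of the lemma is really the observation that right-multiplication labeling is the "homology-adapted" one, dual to the "geometry/moment-polytope-adapted" left-multiplication labeling of Lemma~\ref{lem:leftmult}.
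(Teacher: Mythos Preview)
Your approach via GKM localization and the Chevalley formula is valid in principle and would eventually reach the same conclusion, but it takes a substantially longer route than the paper and, as you yourself flag, leaves the crucial identification ``root $\leftrightarrow$ homology class'' to be pinned down at the end. The paper's proof is a two-line symmetry argument that bypasses all of that bookkeeping: labeling by right-multiplication is invariant under the left $W$-action on $H/B_H$ (since $(gv)^{-1}(gw)=v^{-1}w$), and the homology class of an invariant curve is also left-$W$-invariant (the $W$-action is realized by elements of the connected group $H$, hence is homotopically trivial on ordinary homology). So one may transport any edge $v\,\text{---}\,vs_\beta$ by $v^{-1}$ to the edge $e\,\text{---}\,s_\beta$ at the identity, where right- and left-multiplication labels coincide, and there Lemma~\ref{lem:leftmult} already identifies the curve with the class $\beta$.

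What your approach buys, in exchange for the extra work, is an explicit localization computation of the tangent weights $\pm v(\beta)$ and a direct appeal to the Kostant--Kumar description of $H_*^T(H/B_H)$; this is more informative if one actually needs the equivariant class as a GKM datum. What the paper's approach buys is economy: it reduces the lemma to the single case $v=e$, avoids any convention-chasing about coroots versus roots versus $v(\beta)$, and makes transparent why right-multiplication is the ``homology-adapted'' labeling --- precisely because it is the $W$-invariant one. Your final sentence already contains this insight; the paper simply makes it the whole proof.
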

\begin{proof}
Since labeling by right-multiplication is (left-)$W$-invariant, it suffices to check this for $w=e$, where left and right multiplication are the same, and we get our result by lemma \ref{lem:leftmult}.
\end{proof}

\begin{theorem}\label{thm:rholinebundle} For the labeling by right-multiplication, the lattice length of an edge in a lattice pizza equals the height of the corresponding root.
\end{theorem}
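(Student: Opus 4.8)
The plan is to reduce the statement to a computation about the $\mathcal{O}(\rho)$-moment map on a single $T$-invariant $\bC\bP^1$, and then invoke the geometry already set up in Lemmas \ref{lem:leftmult} and \ref{lem:rightmult}. Fix an edge of the lattice pizza joining vertices labeled $v$ and $w=vs_\beta$ with $v\lessdot w$; by Lemma \ref{lem:rightmult} this edge is (a translate of) the moment polytope of the $T$-invariant $\bC\bP^1$ connecting the $T$-fixed points $v,w$ in $H/B_H$, computed with respect to $\mathcal{O}(\rho)$. So the lattice length of the edge equals the difference of the values of the weights of $\mathcal{O}(\rho)$ at the two torus-fixed points of this $\bC\bP^1$ — equivalently, the absolute value of the pairing of that difference with the primitive coweight generating the one-parameter subgroup that acts on the $\bC\bP^1$.

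The key step is to identify these two weights. The fiber of $\mathcal{O}(\rho)$ over the $T$-fixed point $uB_H$ has weight $-u\rho$ (or $u\rho$, depending on sign conventions — I will fix the sign so that the moment polytope of $H/B_H$ is the convex hull of the $W$-orbit of $\rho$, i.e. the fiber over $uB_H$ has $T$-weight $u\rho$). Thus the two endpoints of our edge are $v\rho$ and $w\rho = vs_\beta\rho = v(\rho - \langle \rho, \beta^\vee\rangle \beta) = v\rho - \langle\rho,\beta^\vee\rangle\, v\beta$. Therefore the edge vector is $\pm\langle\rho,\beta^\vee\rangle\, v\beta$, and since $v\beta$ is a root (hence a primitive vector in the weight lattice, up to the usual caveat), the lattice length of the edge is $|\langle\rho,\beta^\vee\rangle| = \langle\rho,\beta^\vee\rangle$. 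Finally, $\langle\rho,\beta^\vee\rangle$ equals the height of $\beta$: writing $\beta = \sum_i c_i\alpha_i$ with all $c_i\geq 0$, we have $\langle\rho,\beta^\vee\rangle = \sum_i c_i \langle\rho,\alpha_i^\vee\rangle = \sum_i c_i$ since $\langle\rho,\alpha_i^\vee\rangle = 1$ for every simple coroot, and $\sum_i c_i = \operatorname{ht}(\beta)$ by definition. Since by Lemma \ref{lem:rightmult} the right-multiplication label of this edge is exactly $\beta$ (viewed as the equivariant homology class), we conclude.

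One subtlety to address carefully is the passage from "edge vector" to "lattice length": I must check that $v\beta$ is indeed primitive in the relevant lattice, so that the lattice length of the segment from $v\rho$ to $v\rho - \langle\rho,\beta^\vee\rangle v\beta$ is genuinely $\langle\rho,\beta^\vee\rangle$ and not a divisor thereof. This holds because $v\beta$ is a root of $H$ and roots are primitive in the root lattice; in the Kac-Moody generality one should note that the $T$-invariant $\bC\bP^1$ through $v,w$ carries the one-parameter subgroup $\beta^\vee$ (conjugated by $v$) with the standard action, so its moment image is a primitive segment scaled by the weight difference — this is the content of the $SL_2^\beta$-orbit description in the proof of Lemma \ref{lem:leftmult}. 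The main obstacle, then, is not any deep argument but bookkeeping: pinning down the sign/normalization conventions for $\mathcal{O}(\rho)$ consistently with the earlier lemmas and with the convention (stated in section \ref{sec:pieces}) that edges are labeled by right-multiplication, so that "the labeling root" in the theorem statement and the $\beta$ appearing in $w = vs_\beta$ really coincide. Once those conventions are fixed, the computation above is immediate.
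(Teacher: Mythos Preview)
Your approach---computing the moment-map endpoints $v\rho$ and $w\rho = v\rho - \langle\rho,\beta^\vee\rangle\,v\beta$ directly via the reflection formula---is genuinely different from the paper's. The paper instead computes the degree of $L(\rho)|_{\bP^1}$ by intersection theory: writing $i_*[\bP^1] = \sum_i d_i[X^{s_i}]$ in $H_2(H/B_H)$ and using that $c_1(L(\omega_i))=[X_{s_i}]$, the push-pull formula and duality of the Schubert and opposite Schubert bases give $\int_{\bP^1} c_1(L(\rho)) = \sum_i d_i$, which is then identified with the height via Lemma~\ref{lem:rightmult}. Your route is more elementary and stays closer to the symplectic picture; the paper's makes the link to the homology class in Lemma~\ref{lem:rightmult} explicit and cleanly separates ``lattice length $=$ degree'' from ``degree $=$ height''.

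There is, however, a real error in your final step. From $\beta = \sum_i c_i\alpha_i$ you assert $\langle\rho,\beta^\vee\rangle = \sum_i c_i\langle\rho,\alpha_i^\vee\rangle$, but the pairing is linear in the coroot, not the root: writing $\beta^\vee = \sum_i c_i'\,\alpha_i^\vee$ one gets $\langle\rho,\beta^\vee\rangle = \sum_i c_i' = \operatorname{ht}(\beta^\vee)$, which differs from $\operatorname{ht}(\beta)$ outside the simply-laced case (e.g.\ in $B_2$, for $\beta = 2\alpha_1+\alpha_2$ one has $\beta^\vee = \alpha_1^\vee+\alpha_2^\vee$, so $\langle\rho,\beta^\vee\rangle = 2 \neq 3$). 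Your primitivity discussion is also not settled: roots are primitive in the root lattice, but not in general in the weight lattice (already $\alpha = 2\omega$ in $A_1$), and neither of these is the ``relevant lattice''---that is $X^*(T_M)\cong\bZ^2$, the character lattice of the $2$-torus for which the pizza is the moment polytope. The clean fix is to do what the paper does implicitly: argue first that the lattice length equals the intrinsic degree $\int_{\bP^1} c_1(L(\rho))$, and then compute that degree (by your reflection-formula method if you like, which gives $\langle\rho,\beta^\vee\rangle$ without any primitivity hypothesis, since the weight difference and the tangent weight are both multiples of $v\beta$).
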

\begin{proof}
An edge in a pizza corresponds to an embedded $T$-invariant $\bP^1$ in $H/B_H$. In general, a $T$-equivariant line bundle over $\bP^1$ is constructed by letting $T$ act on 
\[
\mathcal{O}(\lambda,\mu)=\mathcal{O}(m)\qquad\text{where }\bP^1=\bP(\bC_\lambda \oplus \bC_\mu).
\]
The moment polytope of such a variety is an interval in $\mathfrak{t}^*$ with endpoints $\lambda$ and $\mu$. Let $\frac{\mu-\lambda}{m}$ be the primitive vector in that interval, then we compute
\begin{align*}
\int_{\bP^1}c_1(\mathcal{O}(\lambda,\mu)) &= \int_{\bP^1}c_1(\mathcal{O}(0,\mu-\lambda)) &&=\int_{\bP^1}c_1\left(\mathcal{O}\left(0,\frac{\mu-\lambda}{m}\right)^{\otimes m}\right) \\
&=m\int_{\bP^1}c_1\left(\mathcal{O}\left(0,\frac{\mu-\lambda}{m}\right)\right) &&= m\int_{\bP^1}c_1\left(\mathcal{O}\left(0,(1,0,\ldots , 0)\right) \right)= m,
\end{align*}
which is the number of lattice points in the interval, and we may move the primitive element $\frac{\mu-\lambda}{m}$ to $(1,0,\ldots , 0)$ by applying an element of $SL(n,\bZ)$.

For an arbitrary $H$-weight $\nu$, we have
\[
\xymatrix{
\mathcal{O}(m)\ar[d] \ar[r] & L(\nu=\sum_i c_i\omega_i) \ar[d] \\
\bP^1 \ar@{^{(}->}[r]^i & H/B_H
}
\]
where $i_*([\bP^1])=\sum_i d_i [X^{s_i}]$ in $H_2(H/B_H)$. We also know that the divisor line bundle for the opposite Schubert divisor $X_{s_i}$ is $L(\omega_i)$. So we have
\begin{align*}
m&=\int_{\bP^1} c_1 (i^*(\mathcal{O}(\nu)) \\
&= \int_{\bP^1} i^* \left( c_1(\mathcal{O}(\nu) \right) &\text{by naturality of }c_1\\
&= [c_1(L(\nu))] \cup [i_*(\bP^1)] &\text{by the push-pull formula}\\
&= \left[\sum_i c_i X_{s_i}\right] \cup \left[\sum_i d_i X^{s_i}\right]=\sum_i c_id_i &\text{by duality of the bases }\{X_{s_i}\},\{X^{s_i}\}
\end{align*}
In particular, for $\nu=\rho$, we get $\sum_i c_id_i=\sum_i d_i=\mathrm{ht}(\mu-\lambda)$, since $\mu-\lambda$ is a root by assumption.
\end{proof}

This is promising, since now if an edge in a lattice pizza is length $1$, then it must correspond to a simple root, and if we find enough of them, we might be able to find an $H$ we are looking for. However, the situation is more complicated in general, since it may happen that a certain pizza has no Kazhdan-Lusztig atlas, but a different lattice pizza in the same pizza class does. We will give an example for this in section \ref{sec:toppings}.

Also, we know that length $2$ Bruhat intervals are all diamonds, which leads us to the following Lemma:
\begin{lemma}\label{lem:diamond}
Let $\alpha,\beta$ be roots in some simply laced root system such that $\alpha+\beta$ is a root. Let $w\in W$ and $C=\{ wr_\alpha ,wr_\beta, wr_{\alpha+\beta} \}$. If two elements of $C$ cover $w$ in Bruhat order and are covered by $\widetilde{w}$, then the third element of $C$ cannot cover $w$.
\end{lemma}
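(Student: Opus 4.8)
The plan is to reduce everything to the rank $2$ reflection subgroup $W':=\langle r_\alpha,r_\beta\rangle\subseteq W$. Because the ambient root system is simply laced and $\alpha+\beta$ is a root, $\alpha$ and $\beta$ span a sub-root-system of type $A_2$ with positive roots $\alpha,\beta,\alpha+\beta$; hence $W'\cong S_3$, its three reflections are exactly $r_\alpha,\ r_\beta,\ r_{\alpha+\beta}=r_\alpha r_\beta r_\alpha=r_\beta r_\alpha r_\beta$, and all three elements of $C$ lie in the single coset $wW'$. (If one does not also assume $\langle\alpha,\beta^\vee\rangle=-1$, then $W'$ is merely dihedral, possibly infinite, but nothing below changes: the only feature of $W'$ that gets used is that its Bruhat order is graded with at most two elements in each rank.)

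The key input is the behaviour of Bruhat order on a coset of a reflection subgroup: the coset $wW'$ has a unique element $d$ of minimal length — the one sending every positive root of $W'$ to a positive root of $W$ — the map $u\mapsto du$ is a length-additive, order-preserving bijection $W'\to wW'$, and the reflections of $W$ lying in $W'$ are precisely the reflections of $W'$. For standard parabolic subgroups this is classical; for arbitrary reflection subgroups it is due to Dyer. Granting it, and using that every Bruhat cover is right multiplication by a reflection, the elements of $wW'$ covering $w$ are exactly the $wr$ with $r\in\{r_\alpha,r_\beta,r_{\alpha+\beta}\}$ and $\ell(wr)=\ell(w)+1$; and via the bijection $u\mapsto du$ their number equals the number of upper covers of a single element in the Bruhat order of $S_3$, which is at most two (a dihedral Bruhat order has at most two elements in each rank). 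Hence at most two of the three distinct elements $wr_\alpha,wr_\beta,wr_{\alpha+\beta}$ can cover $w$, which is exactly the assertion of the lemma. The common upper bound $\widetilde w$ plays no role in this argument; it is merely consistent with the picture, since when two of the three do cover $w$ the corresponding element of $S_3$ has length $\leq 1$, and the two upper covers of such an element do admit a common cover, namely the rank $\ell(w)+2$ element of $wW'$ lying above both.

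The one step that requires care — and the only real obstacle — is that $\alpha$ and $\beta$ are arbitrary roots rather than simple roots, so $wW'$ is a coset of a reflection subgroup that need not be standard parabolic, and the clean statement of the second paragraph has to be quoted from (or re-derived within) Dyer's theory of reflection subgroups. If a self-contained treatment is preferred, the length-additivity $\ell(du)=\ell(d)+\ell_{W'}(u)$ can be proved directly by induction on $\ell_{W'}(u)$ from the exchange condition together with the fact that $d$ maps $\Phi^+_{W'}$ into $\Phi^+$; the order-compatibility then follows from the subword characterisation of Bruhat order. This is elementary but somewhat tedious, so invoking Dyer is the cleaner route.
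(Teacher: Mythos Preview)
Your reduction to the rank-$2$ reflection subgroup $W'=\langle r_\alpha,r_\beta\rangle\cong S_3$ is a genuinely different route from the paper's, and it aims for a cleaner statement: that at most two of $wr_\alpha,wr_\beta,wr_{\alpha+\beta}$ can cover $w$, with no reference to $\widetilde w$ at all. The paper instead keeps $\widetilde w$ in play: assuming (say) $wr_\alpha$ and $wr_{\alpha+\beta}$ are the two middle elements of the diamond $[w,\widetilde w]$, it uses the relation $r_{\alpha+\beta}=r_\alpha r_\beta r_\alpha$ to compute $\widetilde w=wr_\alpha r_\beta=wr_\beta\, r_{\alpha+\beta}$ directly, and observes that if $wr_\beta$ also covered $w$ then it too would sit in the middle of $[w,\widetilde w]$, contradicting the diamond property.

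Your argument, however, has a real gap. The length-additivity $\ell_W(du)=\ell_W(d)+\ell_{W'}(u)$ you invoke---and propose to prove by induction---is \emph{false} for non-parabolic reflection subgroups. Take $W=S_5$, $\alpha=e_1-e_3$, $\beta=e_3-e_5$, so $W'=\langle (1\,3),(3\,5)\rangle\cong S_3$, and look at the coset $s_2W'$: here $d=s_2$ is the coset minimum, yet $\ell_W\bigl(s_2\cdot(3\,5)\bigr)=4$ while $\ell_W(s_2)+\ell_{W'}\bigl((3\,5)\bigr)=1+1=2$. Consequently your key step, that upper covers of $w$ inside $wW'$ biject with upper covers of $d^{-1}w$ in $S_3$, also fails: $e\in S_3$ has two upper covers, but $s_2$ has only \emph{one} cover inside $s_2W'$ (namely $s_2(1\,3)$, of length~$2$). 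What Dyer actually proves is that the \emph{directed Bruhat graph} on the coset matches that of $W'$---the direction of each reflection edge is preserved---not that cover relations correspond.

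Your line is easily repaired, and then does yield the stronger $\widetilde w$-free statement. If any of $w\alpha,w\beta,w(\alpha+\beta)$ is negative, the corresponding $wr_\gamma$ lies below $w$ and we are done. Otherwise $w\alpha,w\beta>0$, and the three-step chain
\[
w\ <\ wr_\alpha\ <\ wr_\alpha r_\beta\ <\ wr_\alpha r_\beta r_\alpha=wr_{\alpha+\beta}
\]
(each step goes up because $w\alpha$, $w\cdot r_\alpha(\beta)=w(\alpha+\beta)$, and $w\cdot r_\alpha r_\beta(\alpha)=w\beta$ are all positive) forces $\ell(wr_{\alpha+\beta})\ge\ell(w)+3$, so $wr_{\alpha+\beta}$ cannot be a cover.
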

\begin{proof}
We will prove the statement for $w_1=wr_\alpha$, $w_2=wr_{\alpha+\beta}$; the other cases are symmetric. Assume that $w\lessdot wr_\beta$. We know that height two Bruhat intervals are diamonds, so it suffices to show that $\widetilde{w}$ covers $wr_\beta$, and we will have a contradiction. By the assumptions on $\alpha$ and $\beta$, we could choose $\alpha$ and $\beta$ to be both be simple roots, so we have
\[
r_{\alpha+\beta}=r_\alpha r_\beta r_\alpha = r_\beta r_\alpha r_\beta.
\]
Choose a reduced word $\widetilde{w}=s_1\cdots s_l$. Then $wr_\alpha=s_1\cdots \widehat{s_i} \cdots s_l$ and $wr_{\alpha+\beta}=s_1\cdots \widehat{s_j} \cdots s_l$. We may assume without loss of generality that $i<j$, so $w=s_1\cdots \widehat{s_i}\cdots \widehat{s_j}\cdots s_l$. Then
\[
\widetilde{w}=wr_{\alpha+\beta}r_\alpha=w(r_\alpha r_\beta r_\alpha) r_\alpha = wr_\alpha r_\beta.
\]
Now by assumption $w\lessdot wr_\beta$, but $\widetilde{w}=wr_\beta (r_\beta r_\alpha r_\beta)=wr_\beta r_{\beta+\alpha}$ is a covering relation in Bruhat order since $l(wr_\beta)=l(\widetilde{w})-1$.
\end{proof}

\subsection{Toppings}\label{sec:toppings}
In this section we will describe a way to find all the ``minimal'' flag manifolds $H/B_H$ in which a pizza can have a Kazhdan-Lusztig atlas.
 
Assume that a pizza has a Kazhdan-Lusztig atlas in $H/B_H$, i.e. $M^\prime\subseteq H/B_H$. If a simple root $\alpha$ does not appear as a summand in any of the edge labels, then we could replace $H$ by a smaller group $H^\prime$ by removing $\alpha$ from $H$'s Dynkin diagram. Since $\alpha$ did not appear on any edge labels, $s_\alpha$ does not appear in any of the vertex labels, and the same vertex labels define a Kazhdan-Lusztig atlas in $H^\prime/B_{H^\prime}$. Therefore, to find a minimal $H$, we should look at all possible ways a simple root can appear in the edge labels of the pizza. We first look at how a simple root can label edges of individual pieces. Recall that (Lemma \ref{lem:rightmult}) the edge labels represent homology classes of the invariant $\bC\bP^1$s of the pieces. Since all our pieces are toric, we know what the relations between the classes of the edges are from the Jurkiewicz-Danilov theorem (\cite{CLS}, Theorem 12.4.4). We represent ways of a simple root appearing as edge labels of a piece by drawing a curve across the edges where it does so. Figures \ref{fig:smoothtopping} and \ref{fig:singulartopping} show all the possible ways.
\begin{figure}[H]
\centering
\includegraphics{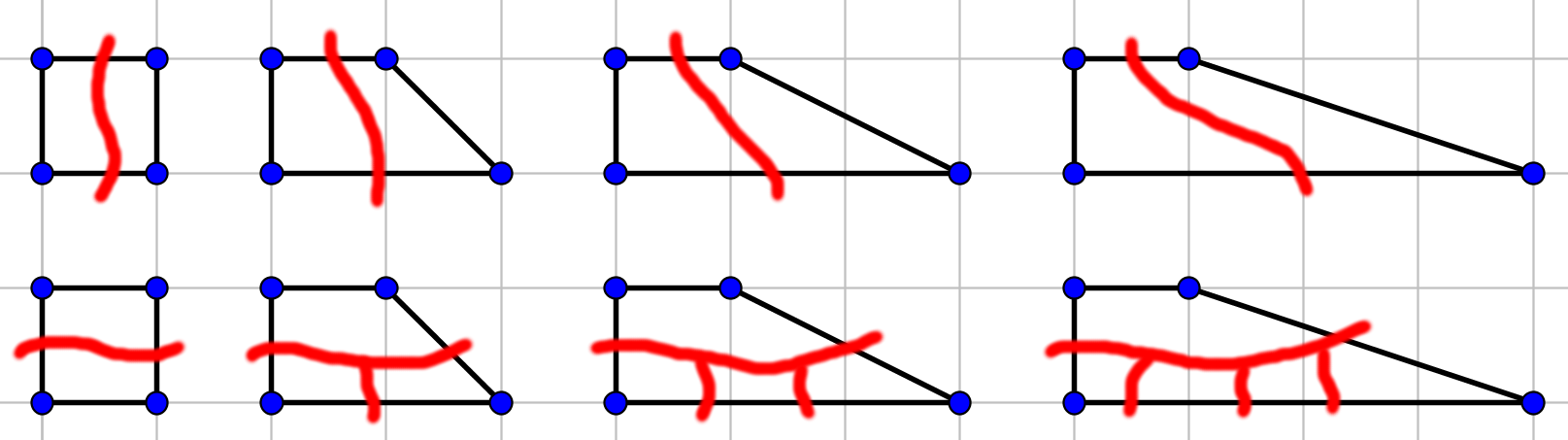}\caption{Smooth pieces\label{fig:smoothtopping}}
\end{figure}
\begin{definition}
A \textbf{topping} on a pizza piece is a generator of $H_2^{\text{effective}}$ of the pizza slice, a \textbf{compatible topping configuration} is a compatible set of toppings on the pieces of a pizza.
\end{definition}
\begin{figure}[H]
\centering
\includegraphics{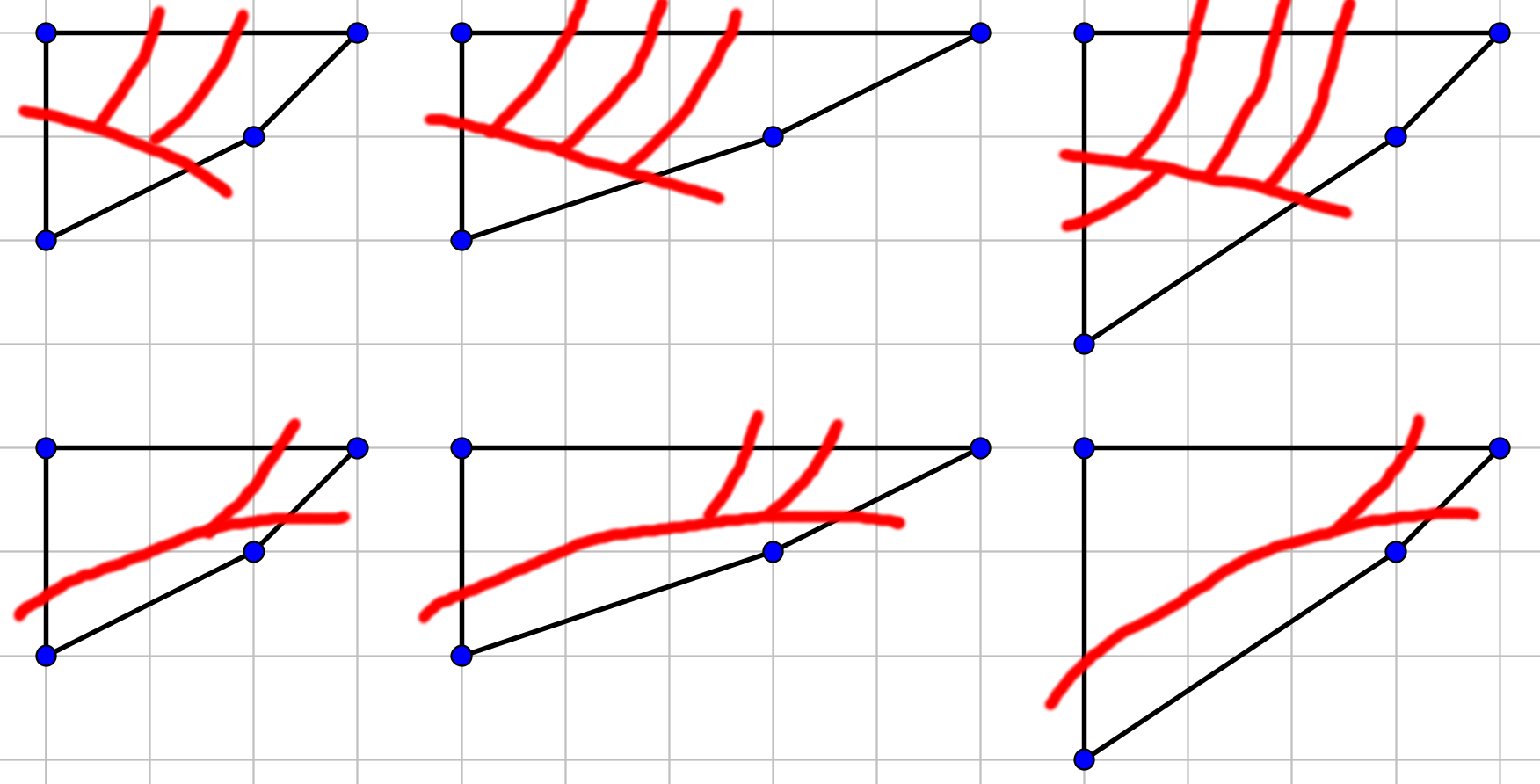}\caption{Singular pieces\label{fig:singulartopping}}
\end{figure}
A simple root of $H$ then appears as a summand on edge labels for a compatible topping configuration on the pizza, e.g.
\begin{figure}[H]
\centering
\includegraphics{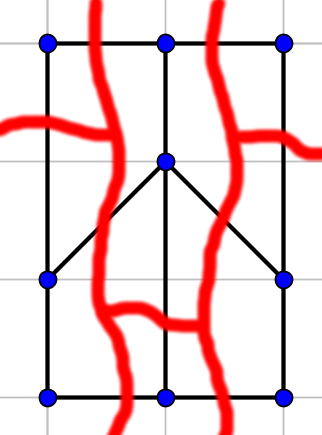}
\end{figure}
So our strategy is the following:
\begin{itemize}
\item List all possible compatible topping configurations on the pizza.
\item Choose a minimal subset of them.
\item Let $H$ be the group with precisely those simple roots.
\end{itemize}
To provide an example, we will perform this on the ``sad face'' pizza just above. The compatible topping configurations are:
\begin{figure}[H]
\centering
\includegraphics{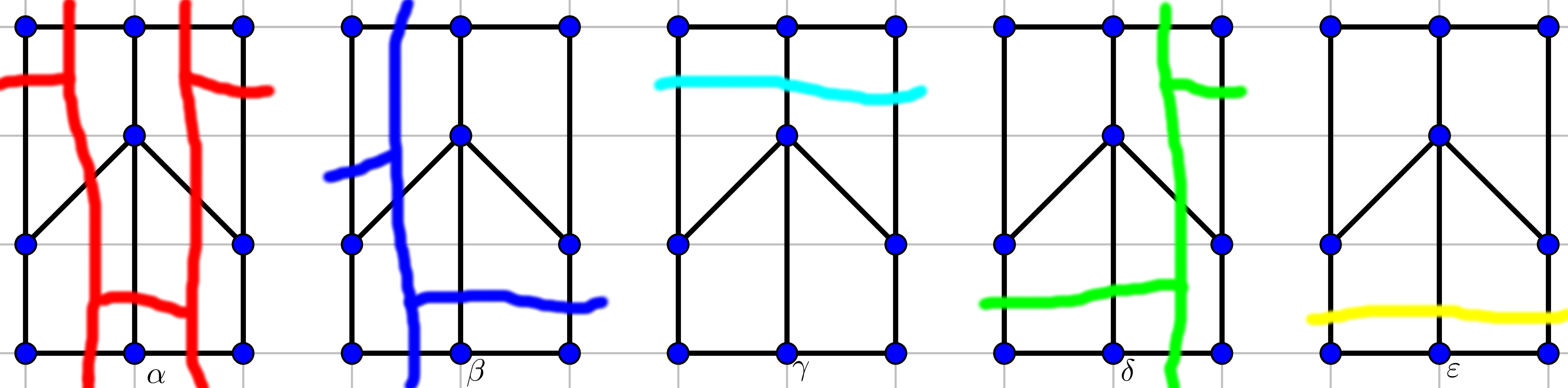}
\end{figure}
We have labeled the topping configurations by the simple roots that they represent. Note that more than one simple root may have the same topping configuration. Next, we put all labels on the pizza (note that we have to increase almost all edge lengths for this):
\begin{figure}[H]
\centering
\begin{subfigure}[b]{0.4\textwidth}
\includegraphics{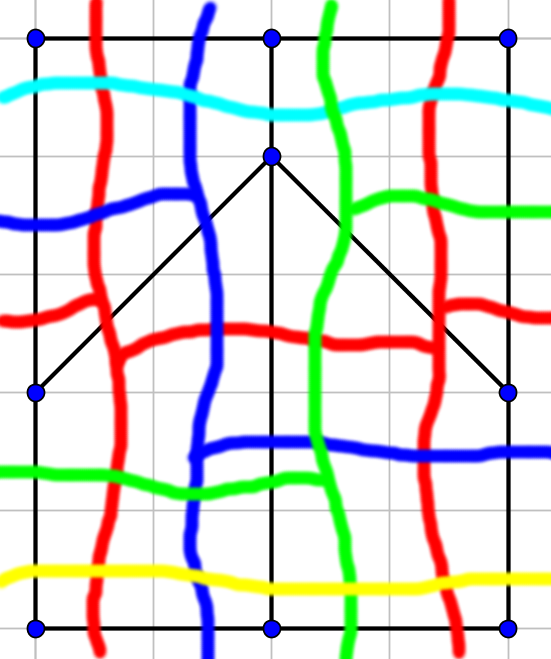}
\end{subfigure}
\begin{subfigure}[b]{0.4\textwidth}
\includegraphics{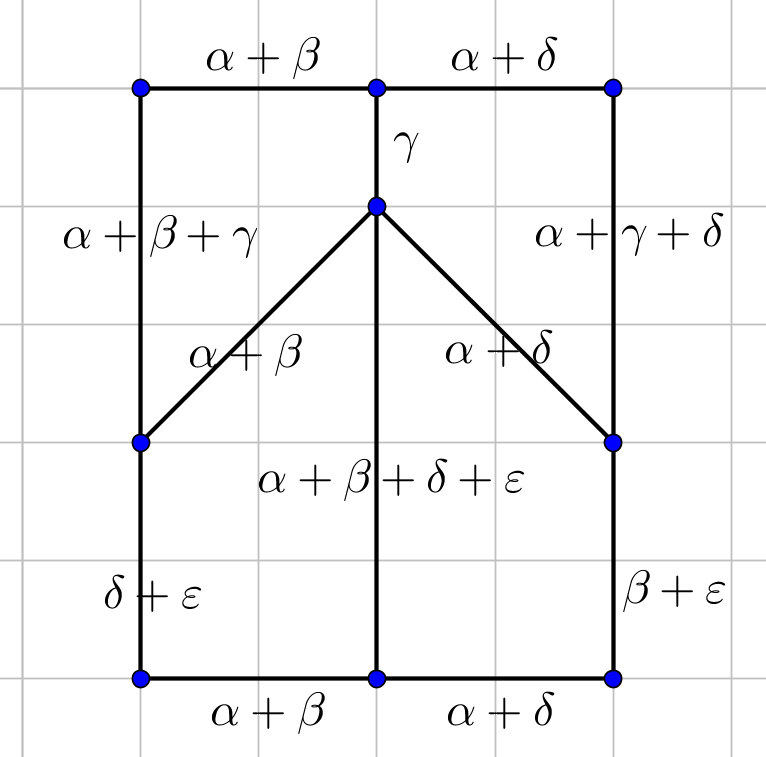}
\end{subfigure}
\end{figure}

\begin{definition}\label{def:spoke}
The \textbf{spokes} of a pizza are the edges connected to the central vertex.
\end{definition}

Now we should choose a minimal subset of these toppings. Our subset should satisfy the following:
\begin{enumerate}
\item Every edge has a topping on it.\label{nocontraction}
\item The roots labeling the edges all are real roots in $H$, since they correspond to reflections in $W_H$.\label{realroots}
\item No two spokes have the same label (so we do not contradict part $2$ of definition \ref{def:KLatlas}).
\item No three spoke labels contradict Lemma \ref{lem:diamond}.
\end{enumerate}
Such a choice is a good candidate for a minimal $H$.

\begin{enumerate}
\item To satisfy condition \ref{nocontraction}, we see that we need $\gamma$ for sure. Upon closer inspection, we may conclude that we need at least one of $\{\alpha, \varepsilon\},\{\beta,\delta\}$ to be a subset of the simple roots. 
\item To satisfy condition \ref{realroots}, we see that we can not have $\alpha, \beta, \delta, \varepsilon$ simultaneously. The reason for this is that $\alpha+\beta, \beta+\varepsilon, \varepsilon+\delta, \delta+\alpha$ are all (real) roots, since they label edges of the pizza. So $\{\alpha, \beta\}, \{\beta, \varepsilon\}, \{\varepsilon\,{\delta}\}, \{\delta, \alpha\}$ all form root systems of type $A_2$, so altogether $\{\alpha, \beta, \delta, \varepsilon\}$ must form a root system of type $\widetilde{A_3}$, in which $\alpha+\beta+\delta+\varepsilon$ is an imaginary root.
\end{enumerate}

Now we will analyze each of the cases.
\begin{itemize}
\item If $\alpha$ is not a simple root, then we must have at least $\beta,\gamma,\delta$ as simple roots. Using only these three, we see that we violate Lemma \ref{lem:diamond}. So we have to also use $\varepsilon$. So our labels must be
\begin{figure}[H]
\centering
\begin{subfigure}[b]{0.4\textwidth}
\includegraphics{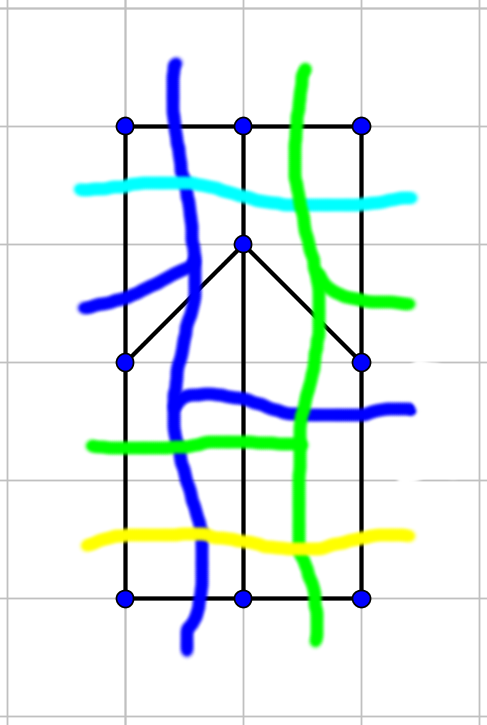}
\end{subfigure}
\begin{subfigure}[b]{0.4\textwidth}
\includegraphics{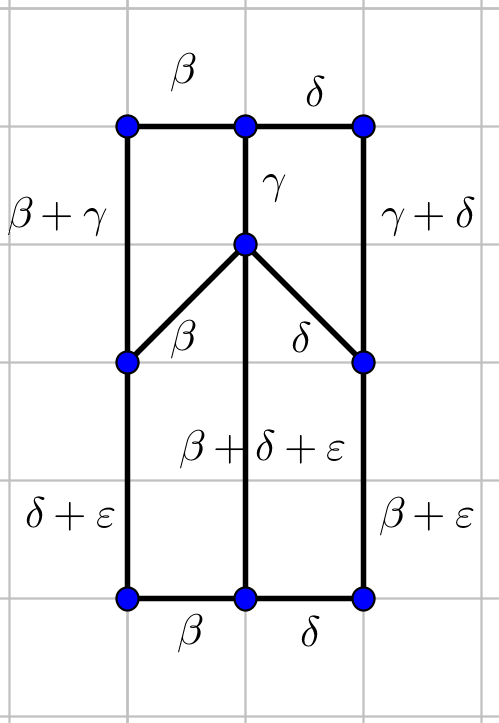}
\end{subfigure}
\end{figure}

\noindent Since $\beta+\varepsilon, \varepsilon+\delta, \delta+\gamma, \gamma+\beta$ are all roots, $\{ \beta, \gamma, \delta, \varepsilon \}$ form a root system of type $\widetilde{A_3}$. Now we should find an element $w_M(\text{center})$ that labels the central vertex of the pizza such that all the edges correspond to covering relations. If $w_M(\text{center})=s_\delta s_\beta s_\varepsilon$, then we have a Kazhdan-Lusztig atlas on this pizza.
\item If $\varepsilon$ is not a simple root, we see that we need all of $\{\alpha,\beta,\gamma,\delta\}$ to be simple roots. So our labels must be
\begin{figure}[H]
\centering
\begin{subfigure}[b]{0.4\textwidth}
\includegraphics{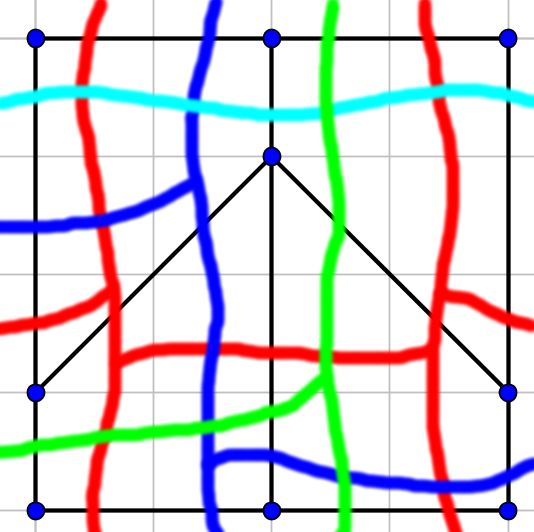}
\end{subfigure}
\begin{subfigure}[b]{0.4\textwidth}
\includegraphics{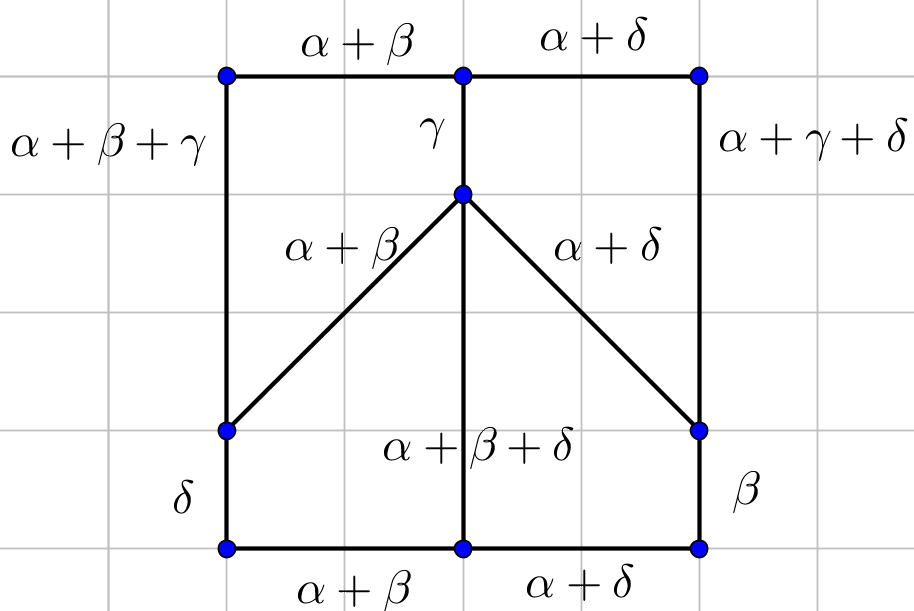}
\end{subfigure}
\end{figure}
In $H$'s Dynkin diagram, there should be edges $\{\alpha,\beta\}$, $\{\alpha,\delta\}$, and either an edge $\{\alpha,\gamma\}$, or both $\{\beta,\gamma\}$ and $\{\gamma,\delta\}$. Let us choose the $\{\alpha,\gamma\}$ edge so $H$ is of type $D_4$. Choosing $w_m(\text{center)}=s_\beta s_\delta$ yields a Kazhdan-Lusztig atlas.
\item Note that the pizza has a symmetry which exchanges $\beta$ and $\delta$, so it suffices to look at the case when $\delta$ is not a simple root. Again, we need all the remaining roots, so our labels must be
\begin{figure}[H]
\centering
\begin{subfigure}[b]{0.4\textwidth}
\includegraphics{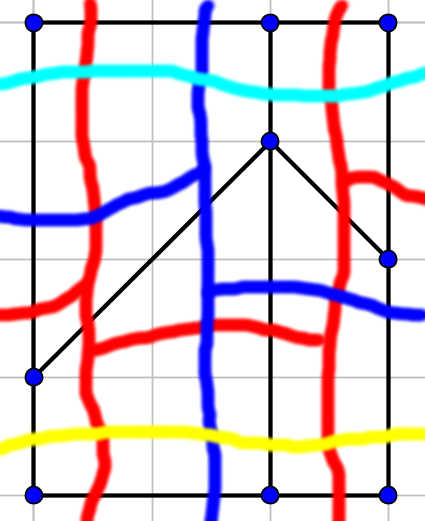}
\end{subfigure}
\begin{subfigure}[b]{0.4\textwidth}
\includegraphics{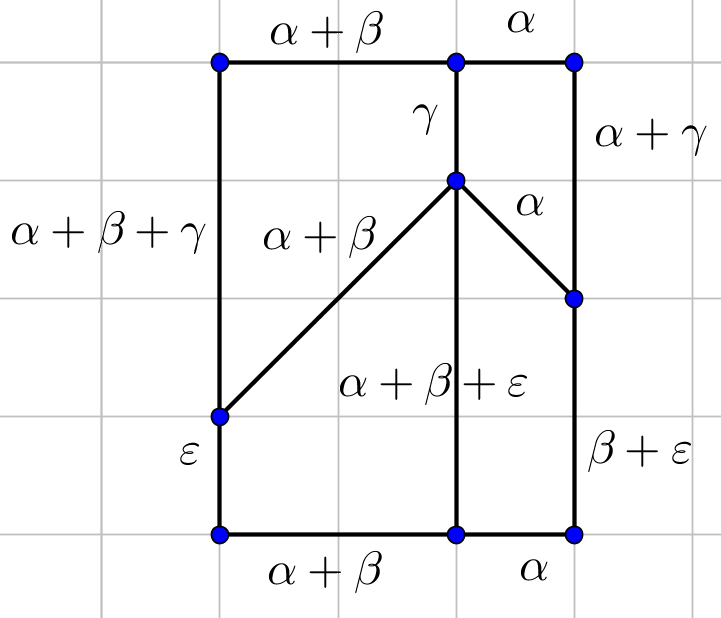}
\end{subfigure}
\end{figure}
Since $\beta+\varepsilon$, $\alpha+\beta$, $\alpha+\gamma$ must be roots, we may choose the root system to be of type $A_4$ in the following way:
\begin{figure}[H]
\centering
\includegraphics{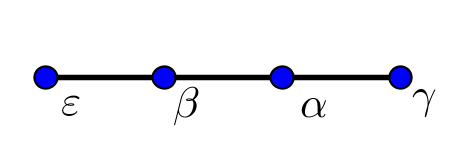}
\end{figure}
Choosing $w_m(\text{center})=s_\beta s_\varepsilon s_\beta$ yields a Kazhdan-Lusztig atlas.
\end{itemize}
So in this case, any compatible topping configuration leads to a Kazhdan-Lusztig atlas. We do not know if this is always true.

\section{Non-simply laced pizzas}\label{sec:nonsimplylacedpizzas}
The restriction to simply laced pizzas was made primarily to avoid having to deal with the $G_2^{\text{opp}}$ piece. Since it has nutritive value $\frac{0}{12}$, one fears it might appear arbitrarily many times in a pizza. In fact it can, as
\begin{proposition}\label{thm:infinitepizzafamily}
For $k\in\bN$, the sequence of pieces $[(G_2^{\text{opp}}b)^k, B_2^{\text{opp}}b, A_2^{\text{opp}}b,A_1\times A_1,(G_2^{\text{opp}})^k, B_2^{\text{opp}}, A_2^{\text{opp}},A_1\times A_1]$ is a valid pizza.
\end{proposition}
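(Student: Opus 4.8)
The plan is to verify the two numerical conditions the paper uses to certify a list of pieces as a (single‑layered) lattice pizza: that the product of the associated $SL_2(\bZ)$‑matrices, taken in the listed cyclic order, is the identity (Theorem~\ref{thm:pizzacondition}), and that the nutritive values sum to $\tfrac{12}{12}$. Granting both, Lemma~\ref{lem:sl2br3} forces the total braid lift to be the unique element of $Br_3$ mapping to $(I,12)\in SL_2(\bZ)\times\bZ$, namely $(AB)^6$; hence the concatenation of the straight‑line paths is a loop of winding number $1$ about the origin (using, as recorded on the list of pieces, that none of those straight‑line paths meets the origin), so the wedges of the pieces tile a punctured neighbourhood of a central vertex exactly once and the list genuinely defines a lattice pizza. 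Thus the whole content is in the two checks, both uniform in $k$.

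For the nutritive values I would use that reversing (reflecting) a piece is an isometry of its central wedge and so preserves $\nu$; reading the table, $\nu(G_2^{\text{opp}})=\nu(G_2^{\text{opp}}b)=0$, $\nu(B_2^{\text{opp}})=\nu(B_2^{\text{opp}}b)=\tfrac1{12}$, $\nu(A_2^{\text{opp}})=\nu(A_2^{\text{opp}}b)=\tfrac2{12}$, and $\nu(A_1\times A_1)=\tfrac3{12}$. The list has $2k+6$ pieces, with total nutritive value $2k\cdot 0+\bigl(\tfrac1{12}+\tfrac2{12}+\tfrac3{12}\bigr)+\bigl(\tfrac1{12}+\tfrac2{12}+\tfrac3{12}\bigr)=1$, independent of $k$.

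For the matrix identity, let $G,B,A,P$ be the $SL_2(\bZ)$‑matrices of $G_2^{\text{opp}},B_2^{\text{opp}},A_2^{\text{opp}},A_1\times A_1$ from the table, and let $G',B',A'$ be the matrices of the reversed pieces, obtained from $X\mapsto\bigl(\begin{smallmatrix}1&0\\0&-1\end{smallmatrix}\bigr)X^{-1}\bigl(\begin{smallmatrix}1&0\\0&-1\end{smallmatrix}\bigr)$; explicitly $G'=\bigl(\begin{smallmatrix}3&1\\-1&0\end{smallmatrix}\bigr)$, $B'=\bigl(\begin{smallmatrix}2&1\\-1&0\end{smallmatrix}\bigr)$, $A'=\bigl(\begin{smallmatrix}1&1\\-1&0\end{smallmatrix}\bigr)$. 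The product to evaluate is $(G')^k\,(B'A'P)\,G^k\,(BAP)$. Set $Q:=B'A'P$. First I would record the $k=0$ computation $Q=\bigl(\begin{smallmatrix}-2&1\\1&-1\end{smallmatrix}\bigr)$ and $BAP=\bigl(\begin{smallmatrix}-1&-1\\-1&-2\end{smallmatrix}\bigr)=Q^{-1}$, a direct $2\times 2$ check. Next I would verify the conjugation identity $Q^{-1}G'Q=G^{-1}$, equivalently $QGQ^{-1}=(G')^{-1}$ --- again a one‑line computation, since $G^{-1}=\bigl(\begin{smallmatrix}3&-1\\1&0\end{smallmatrix}\bigr)$. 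Then the middle block telescopes:
\[
(G')^k\,Q\,G^k\,Q^{-1}=(G')^k\,(QGQ^{-1})^k=(G')^k\,(G')^{-k}=I
\]
for every $k$, which is exactly the condition of Theorem~\ref{thm:pizzacondition}.

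The only genuinely delicate point is conceptual rather than computational: making explicit that "valid pizza'' is equivalent to (matrix product $=I$) together with ($\sum\nu=1$), via Lemma~\ref{lem:sl2br3} and the winding‑number discussion, so that no separate embeddability argument is needed. The single "clever'' step is spotting the conjugation relation $Q^{-1}G'Q=G^{-1}$, which is what makes the $k$‑fold block collapse; everything else is bookkeeping. Finally, since the list has $2k+6$ pieces and the number of pieces is an invariant of the stratified polygon up to the equivalence of Definition~\ref{def:pizza}, these pizzas are pairwise inequivalent, so the family is infinite and $G_2^{\text{opp}}$ may appear arbitrarily often.
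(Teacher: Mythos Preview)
Your argument is correct. Both proofs rest on the same underlying conjugation phenomenon, but you package it differently from the paper. The paper works entirely inside $Br_3$: it verifies the $k=0$ case by hand, then establishes the two braid identities $G_2^{\text{opp}}\cdot B_2^{\text{opp}}\cdot A_2^{\text{opp}}\cdot G_2^{\text{opp}}=B_2^{\text{opp}}\cdot A_2^{\text{opp}}$ and $G_2^{\text{opp}}\cdot(A_1\times A_1)=(A_1\times A_1)\cdot G_2^{\text{opp}}b$ by manipulating the words in the generators $A,B$, and uses these to slide the $G_2^{\text{opp}}$ factors out inductively. You instead drop to $SL_2(\bZ)$, check the single conjugation $Q\,G\,Q^{-1}=(G')^{-1}$ (which is exactly the matrix shadow of the paper's two braid relations combined), telescope, and then invoke Lemma~\ref{lem:sl2br3} together with the nutritive count to lift the conclusion back up to $Br_3$ in one stroke. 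Your route avoids any braid-word bookkeeping and makes the uniformity in $k$ transparent; the paper's route has the small advantage of never leaving $Br_3$, so it needs no separate appeal to the injectivity lemma or to the nutritive-value check. Your remark that the family is infinite because the number of pieces $2k+6$ distinguishes the equivalence classes is an extra observation not spelled out in the paper's proof.
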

\begin{proof}
Recall that, as elements of $Br_3$, $A_2^{\text{opp}}=AB, B_2^{\text{opp}}=B^{-1}AB, G_2^{\text{opp}}=B^{-1}B^{-1}AB$ (and their backwards analogs are the same braids read backwards). For $k=0$ this is a pizza by direct checking. The general case follows from the fact that in $Br_3$,
\begin{align*}
G_2^{\text{opp}} B_2^{\text{opp}} A_2^{\text{opp}}&= B^{-1}B^{-1}ABB^{-1}ABAB \\
&=B^{-1}B^{-1}AABAB \\
&=B^{-1}B^{-1}ABABB \\
&=B^{-1}B^{-1}BABBB \\
&=B^{-1}ABBB \\
&=B^{-1}ABABB^{-1}A^{-1}BB \\
&=B_2^{\text{opp}} A_2^{\text{opp}} (G_2^{\text{opp}})^{-1},
\end{align*}
which implies
\[
G_2^{\text{opp}} B_2^{\text{opp}} A_2^{\text{opp}} G_2^{\text{opp}} = B_2^{\text{opp}} A_2^{\text{opp}}.
\]
Then our proposition follows from the fact that
\[
G_2^{\text{opp}} (A_1\times A_1) = (A_1\times A_1) G_2^{\text{opp}}b
\]
\end{proof}
However, not all of these will be labelable. Using results of Dyer (\cite{D2}) we are able to reduce the general case to a finite problem. Since only the $G_2^{\text{opp}}$ and $G_2^{\text{opp}}b$ pieces have nutritive value $\frac{0}{12}$, if we can show that a pizza can not have a Kazhdan-Lusztig atlas if it has too many of these pieces, we would be again left with a finite problem.
\begin{proposition}\label{thm:atlaspizzasfinite}
If a $G_2^{\text{opp}}$ or $G_2^{\text{opp}}b$ piece is adjacent to a $B_2^{\text{opp}},B_2^{\text{opp}}b, G_2^{\text{opp}}$ or $G_2^{\text{opp}}b$ piece in a pizza, then the pizza can not have an atlas. Note that this implies that no two $\frac{0}{12}$ nutritional pieces appear consecutively. Also, if a pizza has an atlas, then the only piece sequence in which two $B_2^{\text{opp}}$ or $B_2^{\text{opp}}b$ pieces can be adjacent to each other is $B_2^{\text{opp}},B_2^{\text{opp}}b$.
\end{proposition}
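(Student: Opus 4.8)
Since the pizza subdivision is star-shaped, two pieces are adjacent exactly when they share a spoke (Definition~\ref{def:spoke}). So assume the pizza carries a Kazhdan-Lusztig atlas in some $H/B_H$, let $u\in W_H$ be the label of the central vertex, let $P,P'$ be the two adjacent pieces, and let $\sigma\in\Phi_H$ (a positive real root, by Lemma~\ref{lem:rightmult}) label the shared spoke, with outer endpoint $z$. The first step is to record the local data. Put $\Psi_P:=\Phi_H\cap\mathrm{Span}_\bR(\text{spoke roots of }P)$; by Proposition~\ref{thm:richardsonquads} and the fact that the quadrilateral shapes tabulated in section~\ref{sec:pieces} are pairwise inequivalent, a $G_2^{\mathrm{opp}}$- (resp. $B_2^{\mathrm{opp}}$-) piece forces $\Psi_P$ to be of type $G_2$ (resp. $B_2$), and likewise for $\Psi_{P'}$; note $\sigma\in\Psi_P\cap\Psi_{P'}$. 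Since $P$ is the \emph{specific} Richardson $X^{w_0}_{s_1s_2s_1s_2}$ (resp. $X^{w_0}_{s_1s_2}$), the position of $u$ relative to the positive system of $\Psi_P$ is pinned down, hence so is which pair of roots of $\Psi_P$ are the spokes of $P$ and which roots of $\Psi_P$ label the two far edges of $P$ at $z$. I would compute these explicitly (via the left-multiplication labels of Lemma~\ref{lem:leftmult}, with lengths from Theorem~\ref{thm:rholinebundle}); the key feature to extract is that the far edge of a $G_2^{\mathrm{opp}}$-piece at a spoke endpoint carries one of the two long roots of its $G_2$, a root of large height inside $\Psi_P$.

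\textbf{Reducing to rank $\leq 3$.} The subsystem $\Theta:=\Phi_H\cap\mathrm{Span}_\bR(\Psi_P\cup\Psi_{P'})$ has rank at most $3$ and contains $\Psi_P$ and $\Psi_{P'}$ as rank-$2$ subsystems glued along $\bR\sigma$. This is where Dyer's results \cite{D2} enter: his description of the root subsystems (finite and infinite) of a Kac-Moody root system and of their canonical reflection generators produces a \emph{finite} list of possibilities for $\Theta$ together with the pair $(\Psi_P,\Psi_{P'})$ inside it — this is precisely what turns the a priori infinite problem (all Kac-Moody $H$) into a finite check. For the first sentence of the Proposition one enumerates the $\Theta$ containing a $G_2$ and a $B_2$ (or two $G_2$'s) sharing a common root; the relevant $\Theta$ are of type $B_3$, $C_3$, $\widetilde{C_2}$ and a short list of indefinite rank-$3$ systems.

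\textbf{Eliminating the cases.} For each candidate $\Theta$ I would test the three necessary conditions assembled in section~\ref{sec:toppings}: (i) every root labelling an edge of $P$ or $P'$ — in particular the two far-edge roots at $z$ and the diamond-completing roots of the length-$2$ intervals $[u,\cdot]$ around $z$ and around $u$ — must be a \emph{real} root of $\Theta$ (condition~\ref{realroots}); (ii) Lemma~\ref{lem:diamond} and its non-simply-laced analogue must hold at $u$ and at $z$; (iii) $u$ must sit simultaneously at the position prescribed by $P$ relative to $\Psi_P$ and by $P'$ relative to $\Psi_{P'}$, a Bruhat-length compatibility. Each candidate fails at least one; I expect the recurring obstruction to be (i): the long $G_2$-root labelling a far edge of the $G_2^{\mathrm{opp}}$-piece is forced to become an imaginary root of $\Theta$, since the only $\Theta$ accommodating the two rank-$2$ subsystems geometrically is affine or indefinite with that vector imaginary. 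This gives the first sentence; ``no two $\frac{0}{12}$ pieces consecutive'' is the sub-case where $P,P'$ are both $G_2^{\mathrm{opp}}$ or $G_2^{\mathrm{opp}}b$ (cf. the pizzas of Proposition~\ref{thm:infinitepizzafamily}). For the last sentence, when $P,P'$ are both $B_2^{\mathrm{opp}}$-type, type alone gives nothing (both $\Psi$'s are $B_2$, possibly equal); instead one uses that a $B_2^{\mathrm{opp}}$-piece forces $u$ to look like $s_1s_2$ in $W(\Psi_P)$ while a $B_2^{\mathrm{opp}}b$-piece forces $s_2s_1$. Running the same rank-$\leq 3$ analysis, the length compatibility (iii) holds only when the two pieces lie on opposite sides of the shared spoke with $u$ at the $s_1s_2$-position for one and the $s_2s_1$-position for the other — exactly the order $B_2^{\mathrm{opp}},B_2^{\mathrm{opp}}b$ — and fails for $B_2^{\mathrm{opp}},B_2^{\mathrm{opp}}$, for $B_2^{\mathrm{opp}}b,B_2^{\mathrm{opp}}b$, and for $B_2^{\mathrm{opp}}b,B_2^{\mathrm{opp}}$.

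\textbf{Main obstacle.} The delicate part is the bookkeeping in the last two steps: correctly producing, via \cite{D2}, the list of rank-$\leq 3$ subsystems $\Theta$ realizing a given gluing of two rank-$2$ subsystems, and then, for each, pinning down the explicit far-edge roots and the position of $u$ well enough to see that one far-edge root is non-real (or that a diamond/length condition fails). Getting the roots and the $u$-position right in the first step is what the whole argument hinges on, and is the step I would be most careful with.
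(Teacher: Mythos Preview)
Your approach diverges from the paper's, and the obstruction you anticipate is not the one that actually bites. The paper does not enumerate rank-$3$ subsystems $\Theta$ at all. In the representative case $G_2^{\text{opp}}b,\,G_2^{\text{opp}}$ it writes the three spoke labels as $\alpha+3\beta$, $\beta$, $3\beta+\gamma$, with $\{\alpha,\beta\}$ and $\{\beta,\gamma\}$ each a $G_2$ system and $\beta$ short in both. Dyer's theorem on Bruhat order in reflection subgroups \cite{D2}, applied separately to $W_{\alpha,\beta}$ and $W_{\beta,\gamma}$ together with Lemma~\ref{thm:reflsubgrplemma}, forces the central element $w$ to send every one of $\alpha,\,\alpha+\beta,\,2\alpha+3\beta,\,\alpha+2\beta,\,\gamma,\,\beta+\gamma,\,3\beta+2\gamma,\,2\beta+\gamma$ to a negative root. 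Convexity of the inversion set then makes $w$ invert every positive root of the form $c_1\alpha+c_2\beta+c_3\gamma$ with $c_2\le 2c_1+2c_3$; there are infinitely many of these in the reflection subgroup $\langle r_\alpha,r_\beta,r_\gamma\rangle$, so $w$ would have infinite length. The remaining adjacency patterns (including the $B_2^{\text{opp}}$ pairings in the last sentence) are dispatched by the same infinite-inversion argument.

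Your condition (i) can never fail in the way you predict: every edge label, including the far-edge labels at $z$, is by construction a \emph{real} root of $\Phi_H$ (it indexes a reflection in $W_H$), and a real root of $\Phi_H$ stays real in any subsystem containing it. So ``the long $G_2$-root labelling a far edge becomes imaginary in $\Theta$'' is impossible, and your case-by-case elimination would find nothing to rule out on those grounds. Nor is there a finite list of $\Theta$'s to enumerate: the pairing between the two outer long roots $\alpha$ and $\gamma$ is not determined by the two rank-$2$ pieces, so $\Theta$ genuinely ranges over an infinite family of rank-$3$ systems. The paper's convexity argument is indifferent to that pairing, which is exactly why it works uniformly. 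The correct role of \cite{D2} here is not to classify $\Theta$ but to transport the covering relations at the center down into each rank-$2$ reflection subgroup, read off the forced inversions there, and then combine them.
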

\begin{proof}
We will check the $G_2^{\text{opp}}b, G_2^{\text{opp}}$ case; the other cases are very similar to this one. The sequence of slices looks like (the central vertex is highlighted in red)
\begin{figure}[H]
\centering
\includegraphics{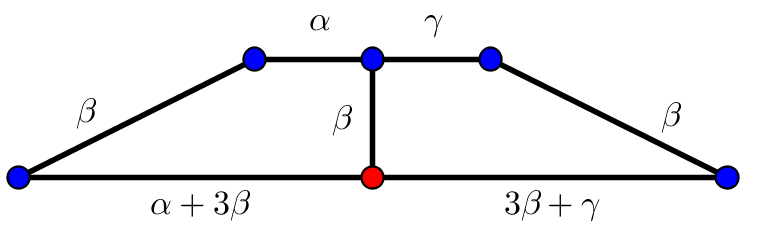}
\end{figure}
Note that we do not know the heights of the roots $\alpha,\beta,\gamma$, but we know from Proposition \ref{thm:richardsonquads} and the discussion afterwards that both $\alpha,\beta$ and $\beta,\gamma$ must form root systems of type $G_2$, with $\beta$ being the short root in both of them.

If $w\in W_H$ is an element covered by $wr_\beta, wr_{\alpha+3\beta}, wr_{3\beta+\gamma}$, then $w$ must move all the following roots to negatives: $\{\alpha, \alpha+\beta, 2\alpha+3\beta, \alpha+2\beta,\gamma, \beta+\gamma, 3\beta+2\gamma, 2\beta+\gamma \}$, since by Theorem 1.4. of \cite{D2}, it suffices to check this in the reflection subgroups $W_{\alpha,\beta}=\langle r_\alpha, r_\beta \rangle, W_{\beta, \gamma}=\langle r_\beta,r_\gamma \rangle$ (both isomorphic to $W_{G_2}$). Now consider the reflection subgroup $W_{\alpha,\beta,\gamma}=\langle r_\alpha, r_\beta ,r_\gamma\rangle$. Clearly any root that is a convex combination of the roots above will be moved to negative roots, in particular, any root of the form $c_1\alpha+c_2\beta+c_3\gamma$ as long as $c_2\leq 2c_1+2c_3$. There are infinitely many roots of this form, so such $w$ would need to have infinite length, which is a contradiction.
\end{proof}
\begin{lemma}\label{thm:reflsubgrplemma}
Assume that $\alpha,\beta$ are simple roots in a root system of type $A_2\; (\text{resp. }B_2, G_2)$ with $\beta$ being the short root (if there are two root lengths). If $w\in W_{\alpha,\beta}$ such that $w\lessdot ws_\beta, w\lessdot ws_{\alpha+\beta}\; (\text{resp. }ws_{\alpha+2\beta}$, or, in the $G_2$ case, $ws_{\alpha+3\beta})$, then $w\cdot \gamma$ is negative for every element of the following set of positive roots: $\{\alpha\}\; (\text{resp. }\{\alpha,\alpha+\beta\}$, or, in the $G_2$ case, $\{\alpha,\alpha+\beta,2\alpha+3\beta,\alpha+2\beta\})$ to negative roots, or, equivalently $ws_\alpha < w\; (\text{resp. }ws_\alpha, wr_{\alpha+\beta} < w$, or, in the $G_2$ case, $ws_\alpha, wr_{\alpha+\beta}, wr_{2\alpha+3\beta}, wr_{\alpha+2\beta}<w)$.
\end{lemma}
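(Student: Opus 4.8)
The plan is to prove all three cases at once by working entirely inside the rank‑two root system $\Phi=\Phi_{\alpha,\beta}$ and its Weyl group $W_{\alpha,\beta}$, which is dihedral of order $2m$ with $m=3,4,6$ in types $A_2,B_2,G_2$. First I would list the positive roots in the order $\gamma_1=\beta,\gamma_2,\ldots,\gamma_m=\alpha$ in which the reflecting hyperplanes are met when sweeping from $\beta$ to $\alpha$; a direct check identifies $\gamma_2=\alpha+\beta,\ \alpha+2\beta,\ \alpha+3\beta$ respectively, and shows that $\{\gamma_3,\ldots,\gamma_m\}$ is exactly the set of positive roots named in the statement (i.e. $\{\alpha\}$; $\{\alpha,\alpha+\beta\}$; and $\{\alpha,\alpha+\beta,2\alpha+3\beta,\alpha+2\beta\}$). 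Writing $N(v)=\{\gamma\in\Phi^+: v\gamma\in\Phi^-\}$, the lemma then amounts to the single assertion $N(w)=\{\gamma_3,\ldots,\gamma_m\}$. The ``equivalently'' clause needs no separate argument: for $\gamma\in\Phi^+$ one has $w\gamma<0\iff\ell(ws_\gamma)<\ell(w)\iff ws_\gamma<w$, which is standard.

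Next I would invoke the structural fact about dihedral groups that $N(v)$ is always either an initial segment $\{\gamma_1,\ldots,\gamma_k\}$ or a final segment $\{\gamma_m,\ldots,\gamma_{m-k+1}\}$ of the list, with $k=\ell(v)$, each segment occurring for a unique element (immediate from the classification of elements by alternating reduced words, or from biconvexity of inversion sets in rank two). Then I translate the hypotheses. Since $\beta$ is simple, $w\lessdot ws_\beta$ is equivalent to $w\beta>0$, i.e. $\gamma_1\notin N(w)$. And $w\lessdot ws_{\gamma_2}$ gives $\gamma_2\notin N(w)$ together with the genuine extra condition $\ell(ws_{\gamma_2})=\ell(w)+1$. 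From $\gamma_1\notin N(w)$ and the structural fact, $N(w)$ is a final segment; the empty case $w=e$ is excluded because then $ws_{\gamma_2}=s_{\gamma_2}$ has length $3$ (as $\gamma_2$ is not simple), so $w$ is not covered by it. Write $N(w)=\{\gamma_m,\ldots,\gamma_{m-k+1}\}$ with $k=\ell(w)$; then $\gamma_2\notin N(w)$ forces $k\le m-2$.

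The crux is to upgrade this to $k=m-2$ using $\ell(ws_{\gamma_2})=\ell(w)+1$. The element $w$ with final‑segment inversion set of size $k$ has reduced word the alternating word of length $k$ ending in $s_\alpha=s_{\gamma_m}$, while in all three types $s_{\gamma_2}=s_\beta s_\alpha s_\beta$ is an alternating word of length $3$ beginning with $s_\beta=s_{\gamma_1}$; so $ws_{\gamma_2}$ is represented by their concatenation, which is again alternating, now of length $k+3$. If $k+3\le m$ this concatenation is reduced, whence $\ell(ws_{\gamma_2})=k+3>\ell(w)+1$, contradicting the covering hypothesis; therefore $k+3>m$, i.e. $k\ge m-2$, so $k=m-2$ and $N(w)=\{\gamma_m,\ldots,\gamma_3\}$, as claimed. (Consistency check: when $k=m-2$ the concatenation has length $m+1$, hence reduces to length $m-1=k+1$, so $ws_{\gamma_2}$ really does cover $w$.) The step I expect to require the most care is precisely this last one — pinning down the reduced word of $w$ (alternating and ending in the generator that makes $N(w)$ a final segment), identifying $s_{\gamma_2}$ as $s_\beta s_\alpha s_\beta$ in each type, and using that an alternating word in a dihedral group of order $2m$ is reduced exactly when its length is at most $m$. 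If one prefers to sidestep this bookkeeping, the alternative is simply to enumerate the $6$, $8$, respectively $12$ elements of $W_{\alpha,\beta}$, locate in each type the unique $w$ whose two upper covers are $ws_{\gamma_1}$ and $ws_{\gamma_2}$, and read off $N(w)$ directly.
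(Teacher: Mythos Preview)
Your argument is correct. The paper's own proof is a one-line direct check: it simply names the unique $w$ in each case ($s_\alpha$, $s_\beta s_\alpha$, and $s_\beta s_\alpha s_\beta s_\alpha$ respectively) and asserts that it satisfies both the covering hypotheses and the conclusion, leaving the verification to the reader. Your approach is genuinely different and more conceptual: you give a uniform dihedral argument that works simultaneously in all three types, by ordering the positive roots angularly as $\gamma_1,\ldots,\gamma_m$, using the biconvexity fact that inversion sets in rank two are initial or final segments, and then squeezing $\ell(w)$ between $m-2$ and $m-2$ via the covering hypothesis $\ell(ws_{\gamma_2})=\ell(w)+1$ and the reduced-word length of the alternating concatenation. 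What your route buys is a single proof that explains \emph{why} $w$ is forced to have exactly the inversion set $\{\gamma_3,\ldots,\gamma_m\}$, and it would extend verbatim to $I_2(m)$ for any $m\ge 3$; what the paper's route buys is brevity, since in each fixed small dihedral group the enumeration is trivial. Your final parenthetical alternative (enumerate the $6$, $8$, or $12$ elements) is exactly what the paper does.
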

\begin{proof}
The only element $w$ that satisfies the covering relations is $s_\alpha\; (\text{resp. }s_\beta s_\alpha,\text{ or, in the $G_2$ case, } s_\beta s_\alpha s_\beta s_\alpha)$ which also moves the above-mentioned roots to negatives.
\end{proof}

Proposition \ref{thm:atlaspizzasfinite} reduces the general case to a finite problem. The following is the best we can say at this moment:
\begin{theorem}\label{thm:atlasbound}
The number of pizzas with Kazhdan-Lusztig atlases is at most $7543$, each having at most $12$ pieces.
\end{theorem}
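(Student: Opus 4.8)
The plan has two stages. First I would bound the number of pieces of any pizza carrying a Kazhdan--Lusztig atlas by $12$, which turns the classification into a finite search; then I would describe the search whose output is the stated bound $7543$.

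\emph{Stage 1: the bound of $12$ pieces.} From the tables in Section \ref{sec:pieces} together with Lemma \ref{lem:KMk}, the only pizza pieces of nutritive value $0$ are $G_2^{\text{opp}}$ and $G_2^{\text{opp}}b$; the only pieces of nutritive value $\tfrac{1}{12}$ are $B_2^{\text{opp}}$ and $B_2^{\text{opp}}b$; and $KM(k)$ cannot occur for $k\geq 10$, so the set of pieces that can appear in any pizza is finite, with every piece other than the four above having $\nu\geq\tfrac{2}{12}$. Now let $\mathcal{P}$ be a pizza with an atlas, write its pieces cyclically, and let $q$ be the number of $G_2^{\text{opp}}$-type pieces. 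By Proposition \ref{thm:atlaspizzasfinite}, no two $G_2^{\text{opp}}$-type pieces are adjacent, and none is adjacent to a $B_2^{\text{opp}}$-type piece; hence both neighbours of a $G_2^{\text{opp}}$-type piece have nutritive value $\geq\tfrac{2}{12}$. Assign each piece $P$ the integer weight $12\nu(P)$, then transfer one unit of weight from each positive-nutrition piece to each $G_2^{\text{opp}}$-type piece adjacent to it. All weights remain $\geq 0$; each $G_2^{\text{opp}}$-type piece ends with weight exactly $2$; the total weight is unchanged and equals $\sum_P 12\nu(P)=12$; and a positive piece ends with weight $0$ only if it had $\nu=\tfrac{2}{12}$ and both its neighbours are $G_2^{\text{opp}}$-type. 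Let $s$ be the number of such ``sandwiched'' pieces and $r$ the number of remaining positive pieces (each of new weight $\geq 1$). Counting incidences between sandwiched pieces and their $G_2^{\text{opp}}$-type neighbours gives $2s\leq 2q$, while conservation of weight gives $12 = 2q + \sum(\text{new weights of the }r\text{ pieces})\geq 2q + r$. Hence the total number of pieces is $q+r+s\leq q+(12-2q)+q=12$; when $q=0$ this reads $r\leq 12$.

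\emph{Stage 2: the finite search and the number $7543$.} With at most $12$ pieces drawn from a finite alphabet, the remaining necessary conditions for a pizza with an atlas are all effectively checkable: (i) the nutritive values sum to exactly $\tfrac{12}{12}$, equivalently $\sum_S \mathrm{ab}(S)=12$ rather than merely $\equiv 0 \pmod{12}$, so that the pizza is single-layered (winding number $1$); (ii) reading the pieces cyclically, the product of the associated $SL(2,\mathbb{Z})$ matrices is the identity by Theorem \ref{thm:pizzacondition}, and the product of the prescribed lifts in $Br_3$ is the generator of $\ker(Br_3\to SL_2(\mathbb{Z}))$, which by Lemmas \ref{lem:sl2br3} and \ref{lem:sl2zmod12} is automatic once (i) and the matrix identity hold; (iii) the adjacency restrictions of Proposition \ref{thm:atlaspizzasfinite} hold, i.e. no $G_2^{\text{opp}}$-type piece is adjacent to a $B_2^{\text{opp}}$-type or $G_2^{\text{opp}}$-type piece, and two $B_2^{\text{opp}}$-type pieces are adjacent only in the pattern $B_2^{\text{opp}},B_2^{\text{opp}}b$. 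Enumerating cyclic words in the piece-alphabet of length at most $12$, up to rotation and reflection (the equivalence of Definition \ref{def:pizza}), and keeping those satisfying (i)--(iii), is a finite computation; carried out in Sage it leaves at most $7543$ candidates, each with at most $12$ pieces. Since every pizza with a Kazhdan--Lusztig atlas satisfies (i)--(iii), the theorem follows.

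\emph{Main obstacle.} The substantive point is Stage 1, and in particular obtaining the constant $12$ rather than a weaker bound such as $24$: a naive count (at most $12$ positive-nutrition pieces, plus one $G_2^{\text{opp}}$-type piece between each consecutive pair of them) only gives $24$, and one genuinely needs the sandwiching/weight-transfer bookkeeping together with the full strength of Proposition \ref{thm:atlaspizzasfinite}, namely that $G_2^{\text{opp}}$-type pieces avoid $B_2^{\text{opp}}$-type neighbours and not merely each other. Stage 2 is then a brute-force enumeration; the only delicate points there are fixing the correct finite alphabet (using Lemma \ref{lem:KMk} to cut off the $KM(k)$ family) and the correct list of necessary conditions, and noting that ``at most'' is unavoidable since passing (i)--(iii) is not known to guarantee the existence of an atlas (cf. the discussion in Section \ref{sec:toppings}).
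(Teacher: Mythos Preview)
Your proposal is correct and follows the paper's own approach: invoke Proposition~\ref{thm:atlaspizzasfinite} to make the problem finite, then brute-force in Sage. The paper's proof is literally one sentence (``This is a brute-force check by Sage, using Proposition~\ref{thm:atlaspizzasfinite}''), so your Stage~1 weight-transfer argument is an addition rather than a deviation: the paper does not give an a~priori combinatorial proof that the number of pieces is at most~$12$, and presumably this emerges as output of the enumeration rather than as input to it. Your argument is clean and uses exactly the right ingredient---that $G_2^{\text{opp}}$-type pieces avoid $B_2^{\text{opp}}$-type neighbours, not merely each other---to sharpen the naive bound of~$24$ down to~$12$; this is a genuine improvement in exposition over the paper, though not a different route to the theorem.
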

\begin{proof}
This is a brute-force check by Sage \cite{sage}, using Proposition \ref{thm:atlaspizzasfinite}.
\end{proof}

\section{Some Kazhdan-Lusztig atlases for simply laced pizzas}\label{sec:listofatlases}
\begin{definition}
We define the \textbf{height} of a Kazhdan-Lusztig atlas to be the length of the $W_H$-element at the central vertex of the pizza.
\end{definition}
We will only describe one of the (possibly many) minimal height atlases for each pizza. All of these atlases have been obtained following the algorithm of section \ref{sec:toppings}. The Dynkin diagram of the group $H$ is displayed near the pizza. The search for the $W_H$ element at the center of each pizza was done by Sage \cite{sage}. Regrettably, we were unable to find an atlas (even a non-simply-laced one) for the pizza $[A_2^{opp}, A_2^{oppb}, A_2^{opp}, G_2b]$, we suspect that it does not have an atlas.
\begin{enumerate}

\subsection*{Height $0$ (Bruhat atlases)}
\begin{minipage}{\textwidth}\item The pizza $[A_2, A_2, A_2]$:
\begin{figure}[H]
\centering
\begin{subfigure}{0.79\textwidth}
\includegraphics{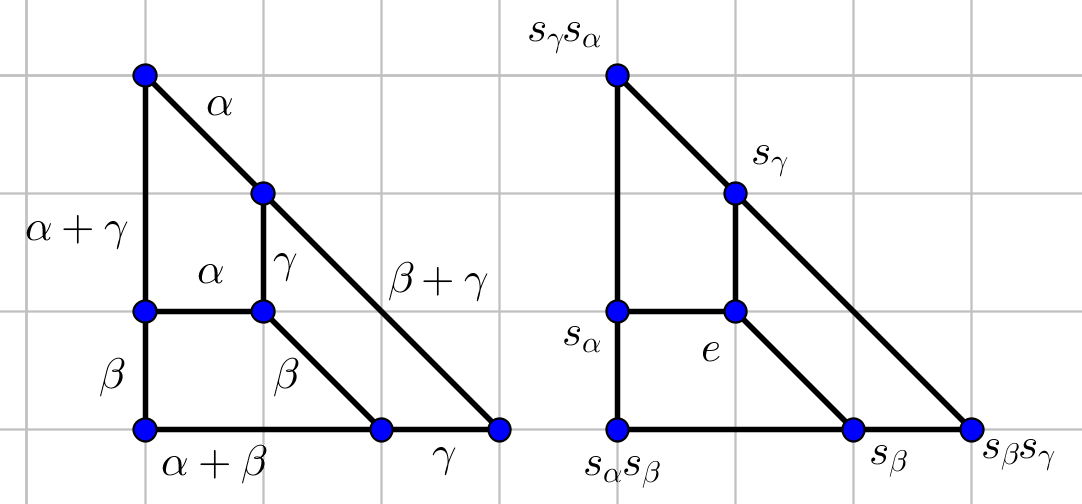}
\end{subfigure}
\begin{subfigure}{0.2\textwidth}
\includegraphics{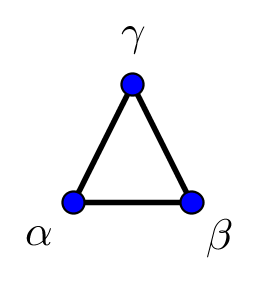}
\end{subfigure}
\end{figure}
\end{minipage}

\begin{minipage}{\textwidth}\item The pizza $[A_1\times A_1, A_1\times A_1, A_1\times A_1, A_1\times A_1]$:
\begin{figure}[H]
\begin{subfigure}{0.79\textwidth}
\includegraphics{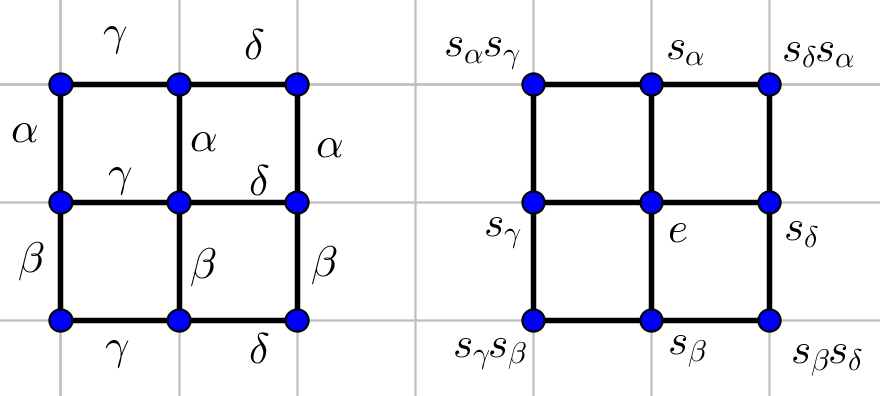}
\end{subfigure}
\begin{subfigure}{0.2\textwidth}
\includegraphics{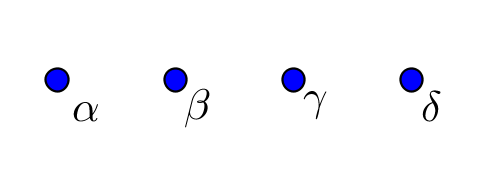}
\end{subfigure}
\end{figure}
\end{minipage}

\subsection*{Height $1$}
\begin{minipage}{\textwidth}\item The pizza $[A_1\times A_1, A_1\times A_1, A_2, A_2^{opp}b]$:
\begin{figure}[H]
\centering
\begin{subfigure}{0.79\textwidth}
\includegraphics{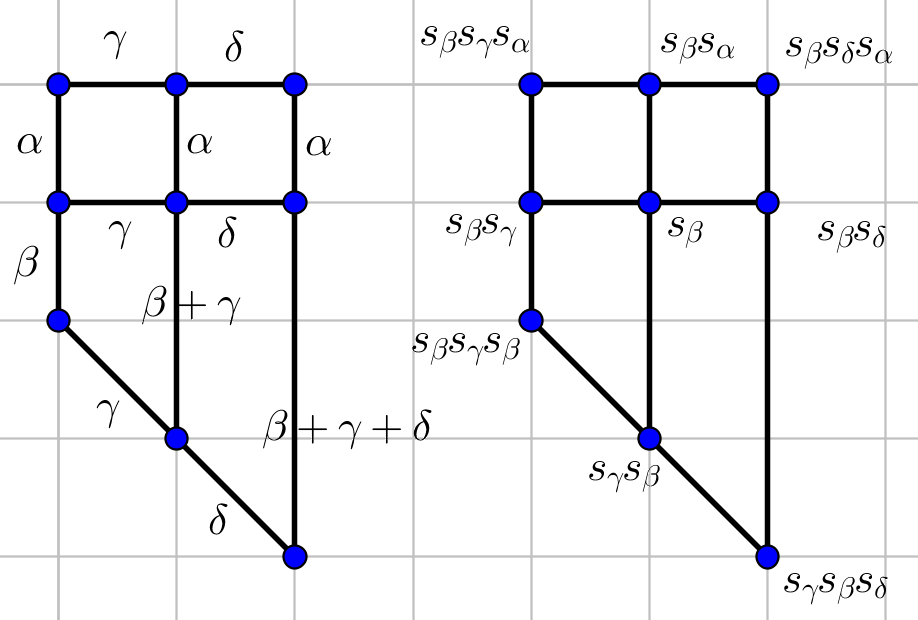}
\end{subfigure}
\begin{subfigure}{0.2\textwidth}
\includegraphics{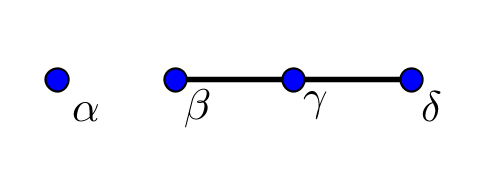}
\end{subfigure}
\end{figure}
\end{minipage}

\begin{minipage}{\textwidth}\item The pizza $[A_1\times A_1,A_1\times A_1,A_2b,A_2^{opp}]$:
\begin{figure}[H]
\centering
\begin{subfigure}{0.79\textwidth}
\includegraphics{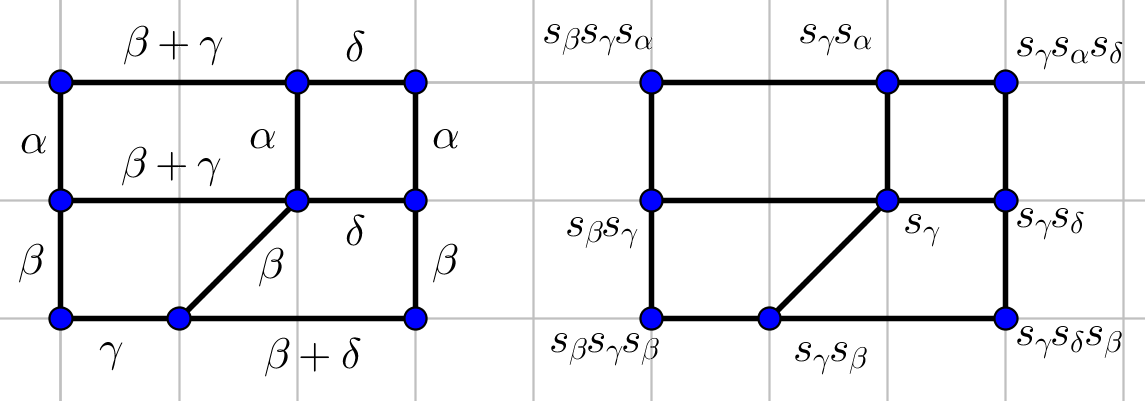}
\end{subfigure}
\begin{subfigure}{0.2\textwidth}
\includegraphics{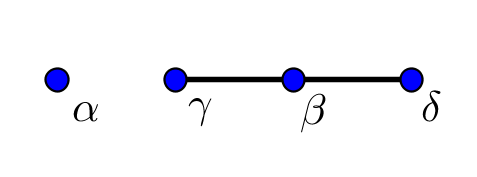}
\end{subfigure}
\end{figure}
\end{minipage}

\begin{minipage}{\textwidth}\item The pizza $[A_1\times A_1,A_2,A_1\times A_1,A_2^{opp}]$:
\begin{figure}[H]
\centering
\begin{subfigure}{0.79\textwidth}
\includegraphics{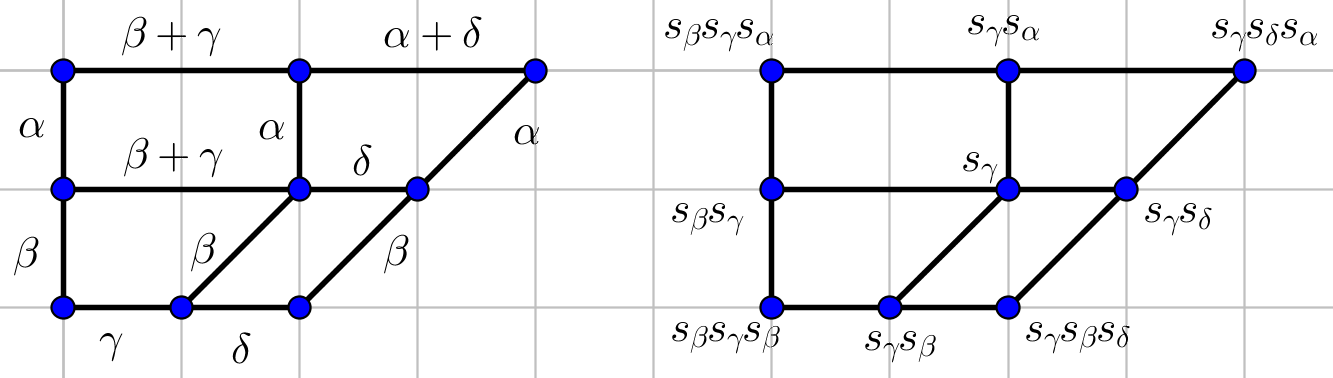}
\end{subfigure}
\begin{subfigure}{0.2\textwidth}
\includegraphics{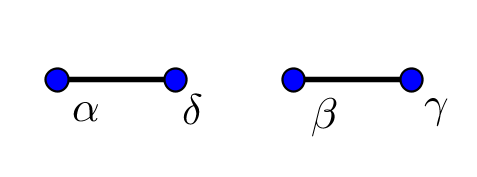}
\end{subfigure}
\end{figure}
\end{minipage}

\begin{minipage}{\textwidth}\item The pizza $[A_2, A_2,A_2^{opp}b, A_2^{opp}b]$:
\begin{figure}[H]
\centering
\begin{subfigure}{0.79\textwidth}
\includegraphics{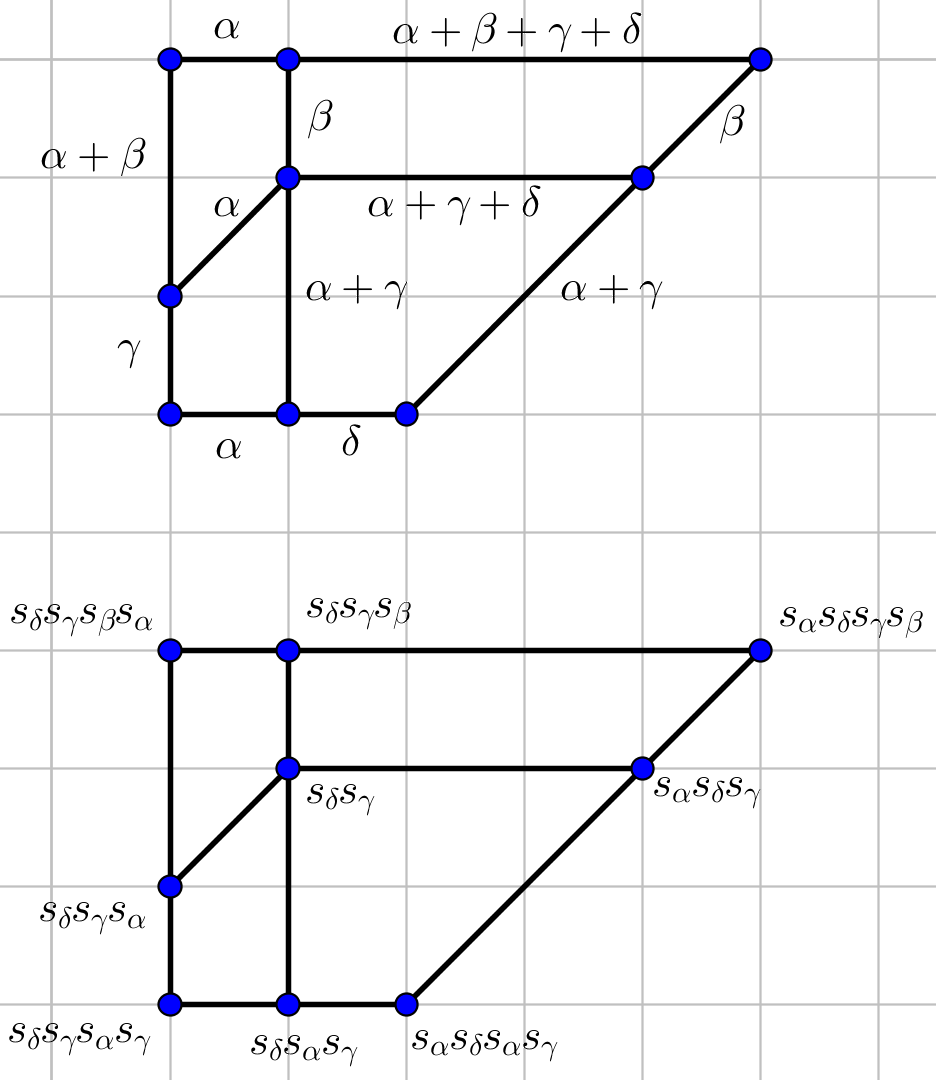}
\end{subfigure}
\begin{subfigure}{0.2\textwidth}
\includegraphics{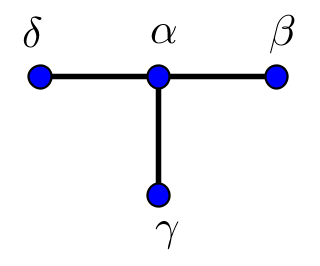}
\end{subfigure}
\end{figure}
\end{minipage}

\subsection*{Height $2$}

\begin{minipage}{\textwidth}\item The pizza $[A_2b,A_2^{opp}, A_2^{opp}b, A_2]$:
\begin{figure}[H]
\centering
\begin{subfigure}{0.79\textwidth}
\includegraphics{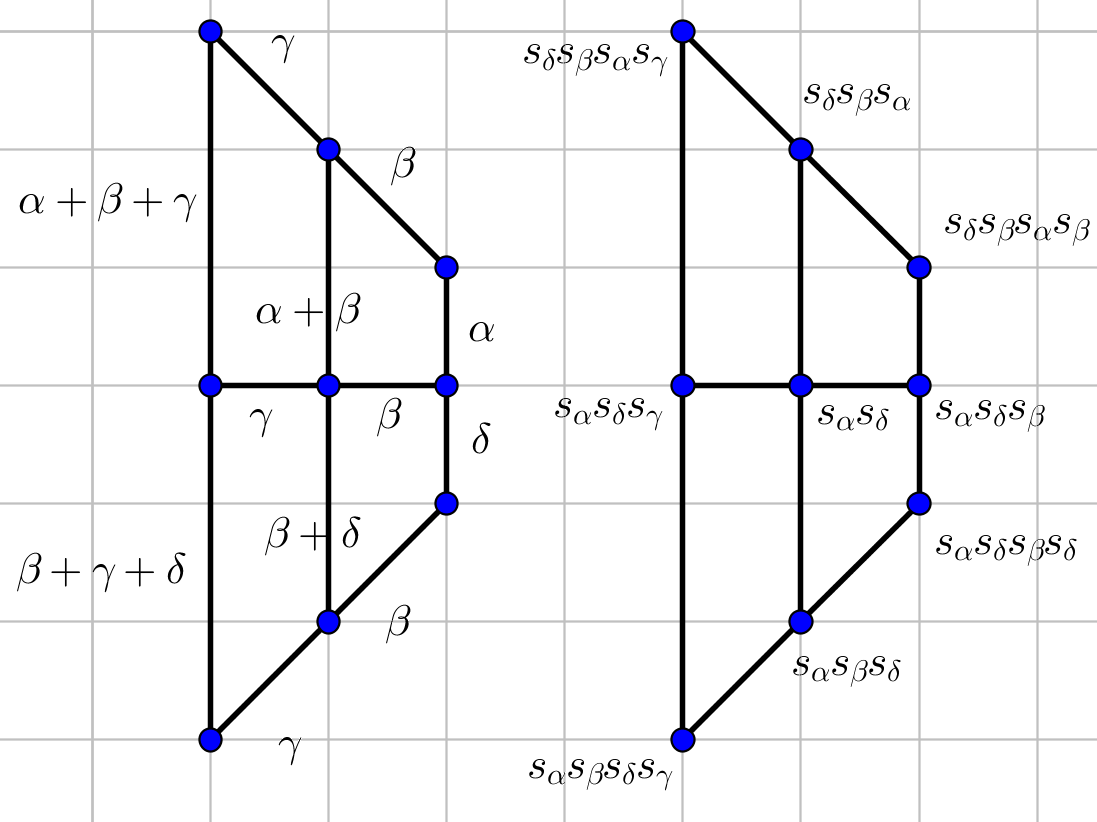}
\end{subfigure}
\begin{subfigure}{0.2\textwidth}
\includegraphics{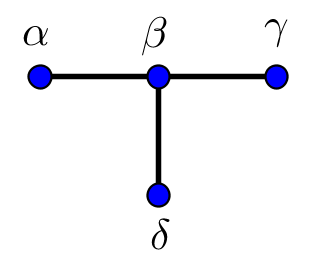}
\end{subfigure}
\end{figure}
\end{minipage}

\begin{minipage}{\textwidth}\item The pizza $[A_2,A_2^{opp}, A_2b, A_2^{opp}b]$:
\begin{figure}[H]
\centering
\begin{subfigure}{0.69\textwidth}
\includegraphics{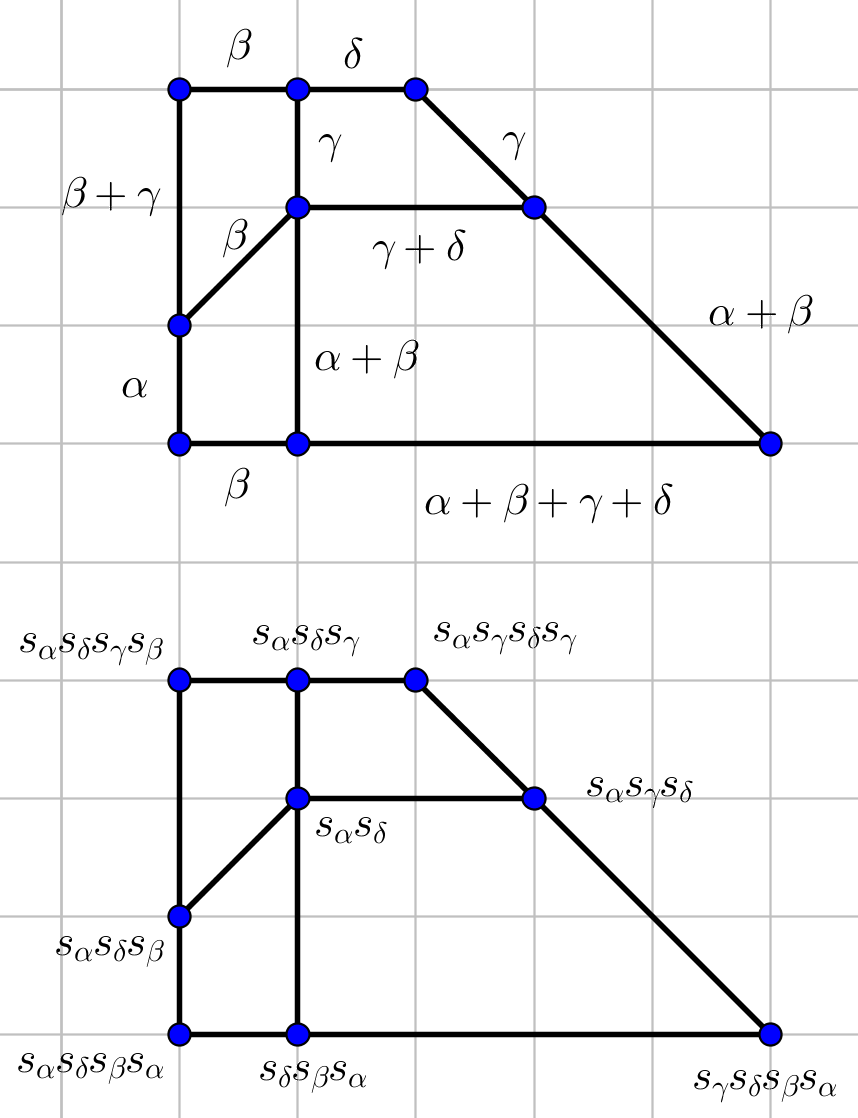}
\end{subfigure}
\begin{subfigure}{0.3\textwidth}
\includegraphics{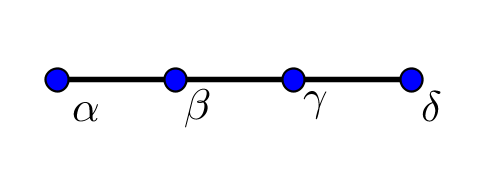}
\end{subfigure}
\end{figure}
\end{minipage}

\begin{minipage}{\textwidth}\item The pizza $[A_2^{opp}b, A_2, A_2^{opp}b, A_2]$:
\begin{figure}[H]
\centering
\begin{subfigure}{0.69\textwidth}
\includegraphics{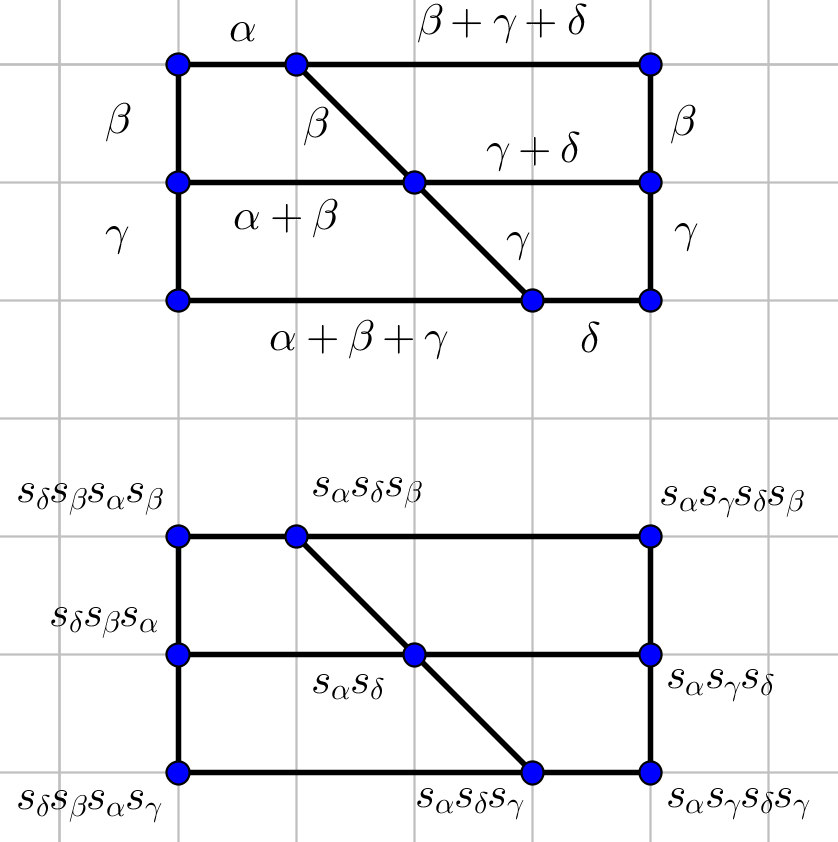}
\end{subfigure}
\begin{subfigure}{0.3\textwidth}
\includegraphics{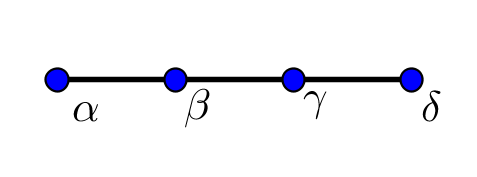}
\end{subfigure}
\end{figure}
\end{minipage}

\begin{minipage}{\textwidth}\item The pizza $[A_2^{opp}, A_2b, A_2, A_2^{opp}]$:
\begin{figure}[H]
\centering
\begin{subfigure}{0.69\textwidth}
\includegraphics{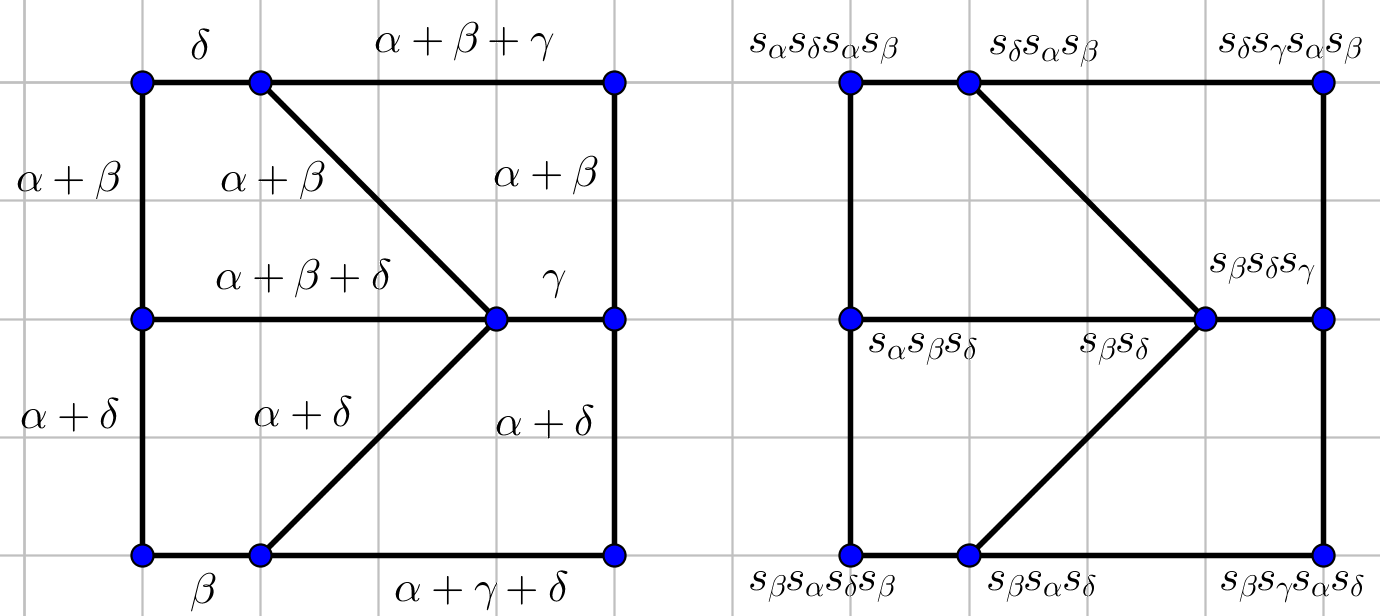}
\end{subfigure}
\begin{subfigure}{0.3\textwidth}
\includegraphics{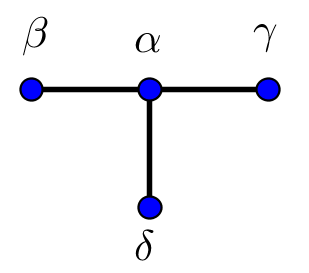}
\end{subfigure}
\end{figure}
\end{minipage}

\subsection*{Height $3$}

\begin{minipage}{\textwidth}\item The pizza $[A_1\times A_1,A_2^{opp}, A_2^{opp}b,B_2]$:
\begin{figure}[H]
\centering
\begin{subfigure}{0.79\textwidth}
\includegraphics{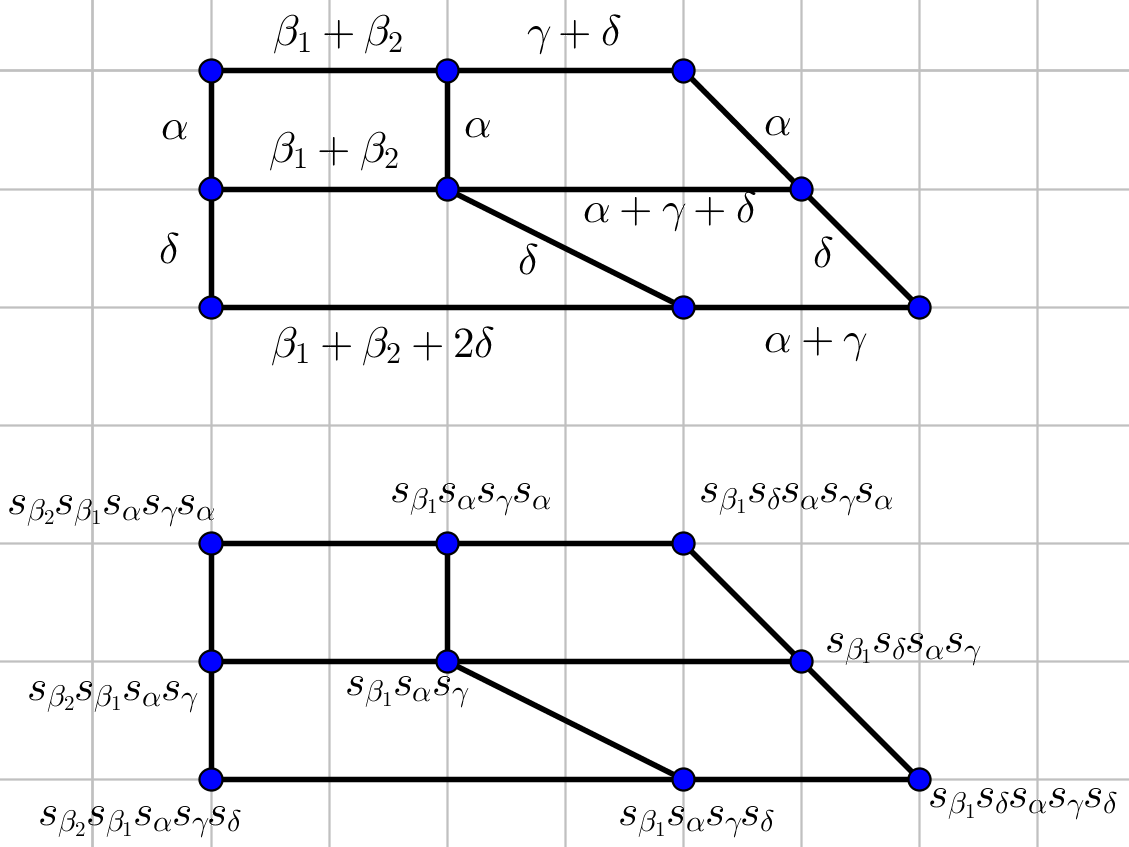}
\end{subfigure}
\begin{subfigure}{0.2\textwidth}
\includegraphics{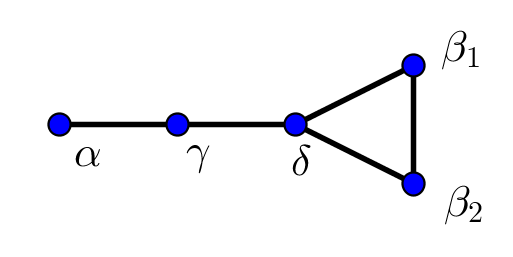}
\end{subfigure}
\end{figure}
\end{minipage}

\begin{minipage}{\textwidth}\item The pizza $[A_1\times A_1,A_2^{opp}, B_2b,A_2^{opp}]$:
\begin{figure}[H]
\centering
\begin{subfigure}{0.79\textwidth}
\includegraphics{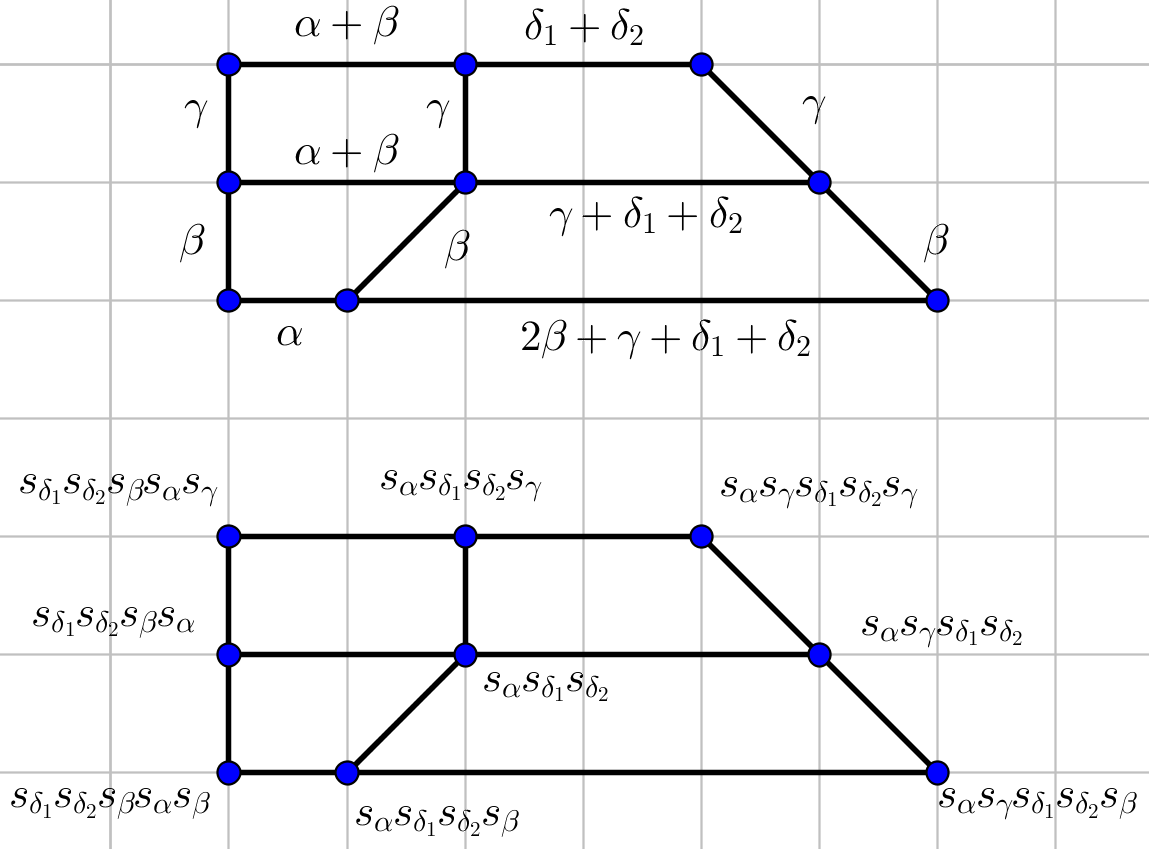}
\end{subfigure}
\begin{subfigure}{0.2
\textwidth}
\includegraphics{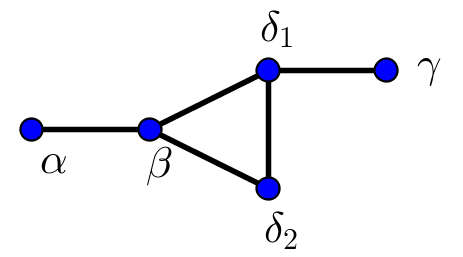}
\end{subfigure}
\end{figure}
\end{minipage}

\begin{minipage}{\textwidth}\item The pizza $[A_2^{opp}b, A_2^{opp},B_2b, A_1\times A_1]$:
\begin{figure}[H]
\centering
\begin{subfigure}{0.79\textwidth}
\includegraphics{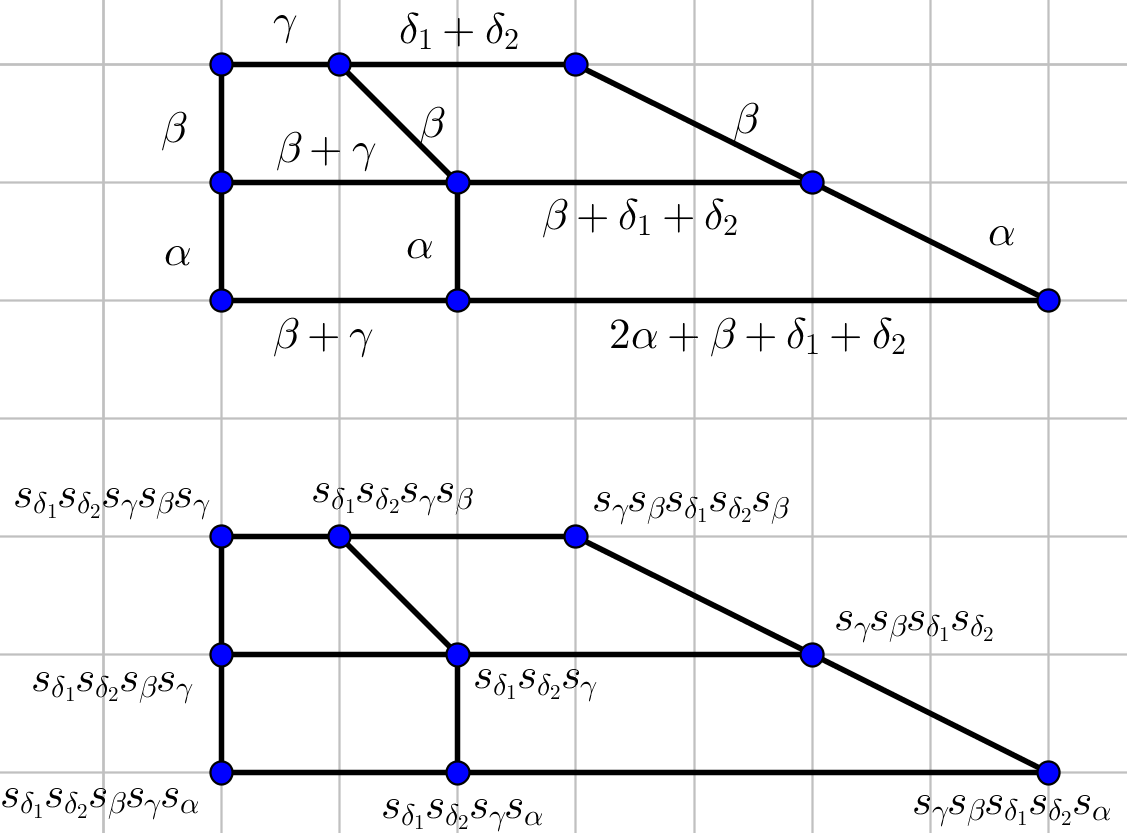}
\end{subfigure}
\begin{subfigure}{0.2\textwidth}
\includegraphics{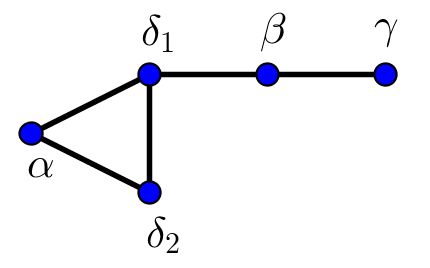}
\end{subfigure}
\end{figure}
\end{minipage}


\begin{minipage}{\textwidth}\item The pizza $[A_1\times A_1, A_1\times A_1, A_2^{opp}, A_2^{opp}, A_2^{opp}]$:
\begin{figure}[H]
\centering
\begin{subfigure}{0.69\textwidth}
\includegraphics{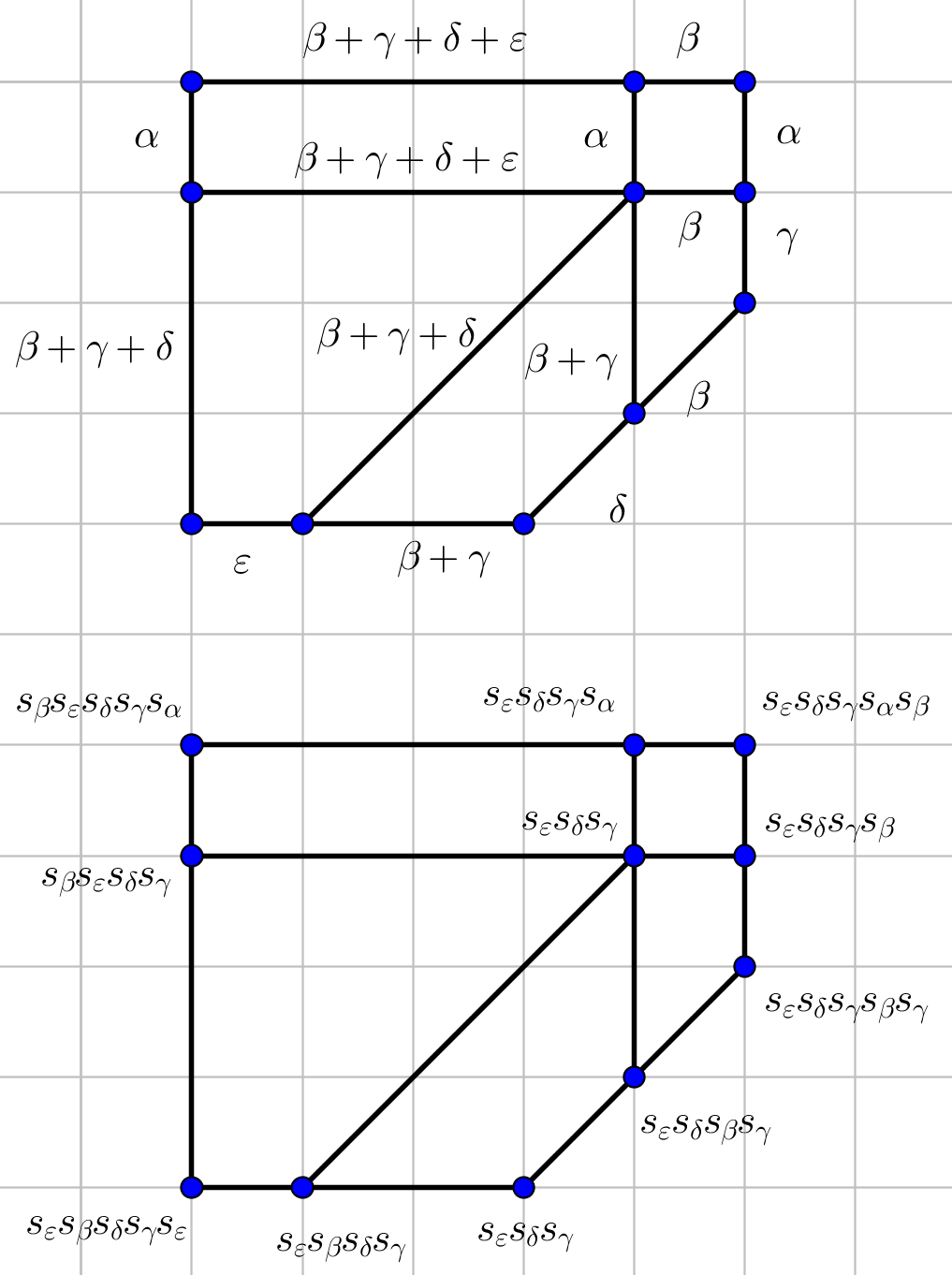}
\end{subfigure}
\begin{subfigure}{0.3\textwidth}
\includegraphics{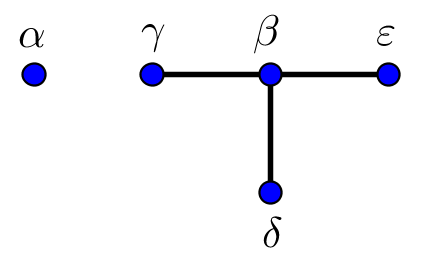}
\end{subfigure}
\end{figure}
\end{minipage}

\begin{minipage}{\textwidth}\item The pizza $[A_1\times A_1, A_2^{opp},A_1\times A_1, A_2^{opp}b, A_2^{opp}b]$:
\begin{figure}[H]
\centering
\begin{subfigure}{0.6\textwidth}
\includegraphics{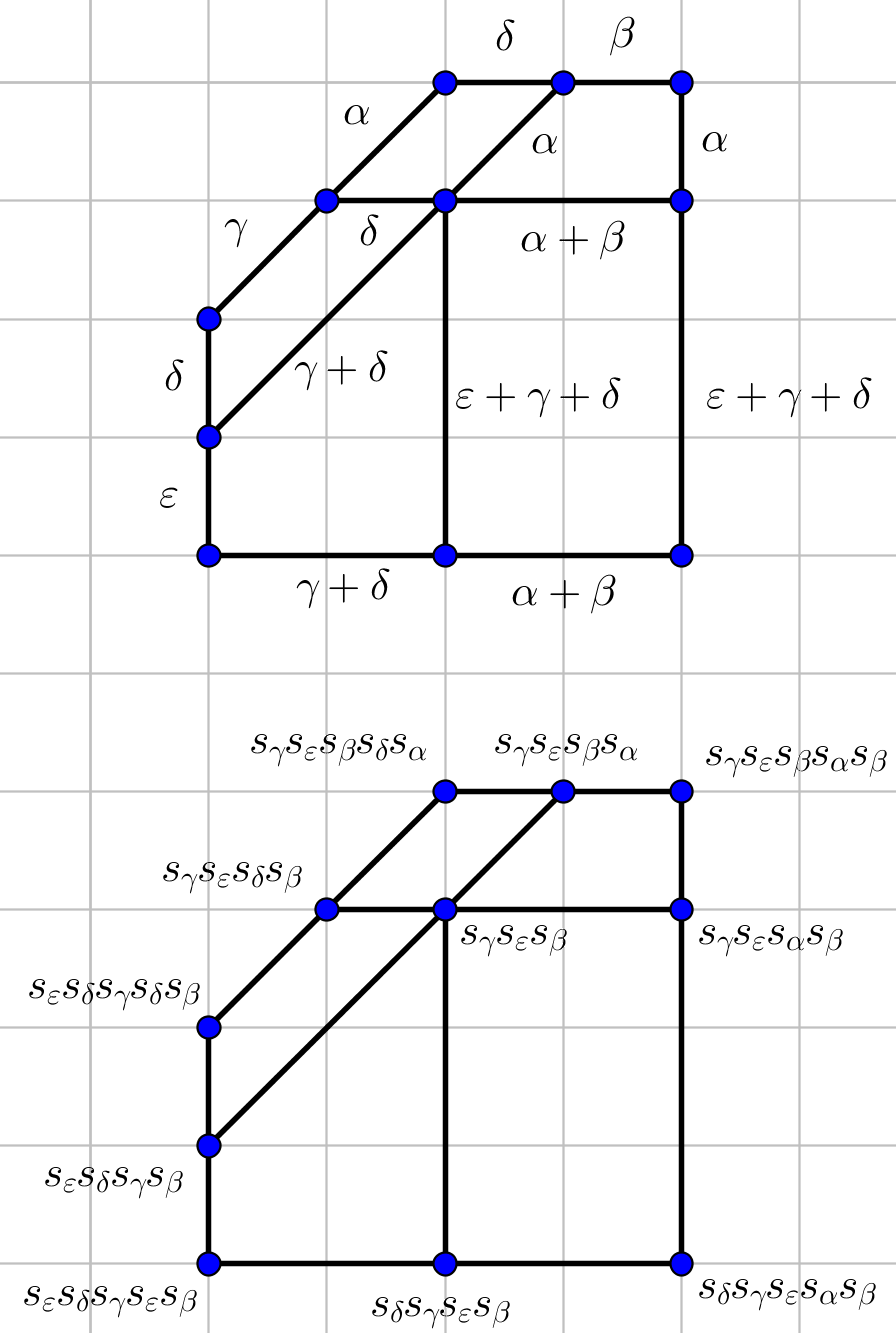}
\end{subfigure}
\begin{subfigure}{0.39\textwidth}
\includegraphics{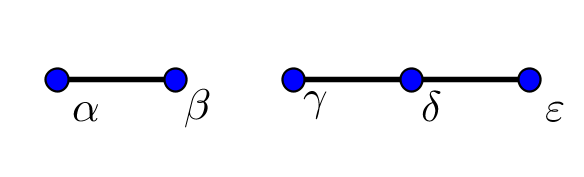}
\end{subfigure}
\end{figure}
\end{minipage}

\begin{minipage}{\textwidth}\item The pizza $[ A_2^{\text{opp}}, A_2^{\text{opp}}, A_2^{\text{opp}}, A_2^{\text{opp}}b, A_2^{\text{opp}}b, A_2^{\text{opp}}b ]$
\begin{figure}[H]
\centering
\begin{subfigure}{0.6\textwidth}
\includegraphics{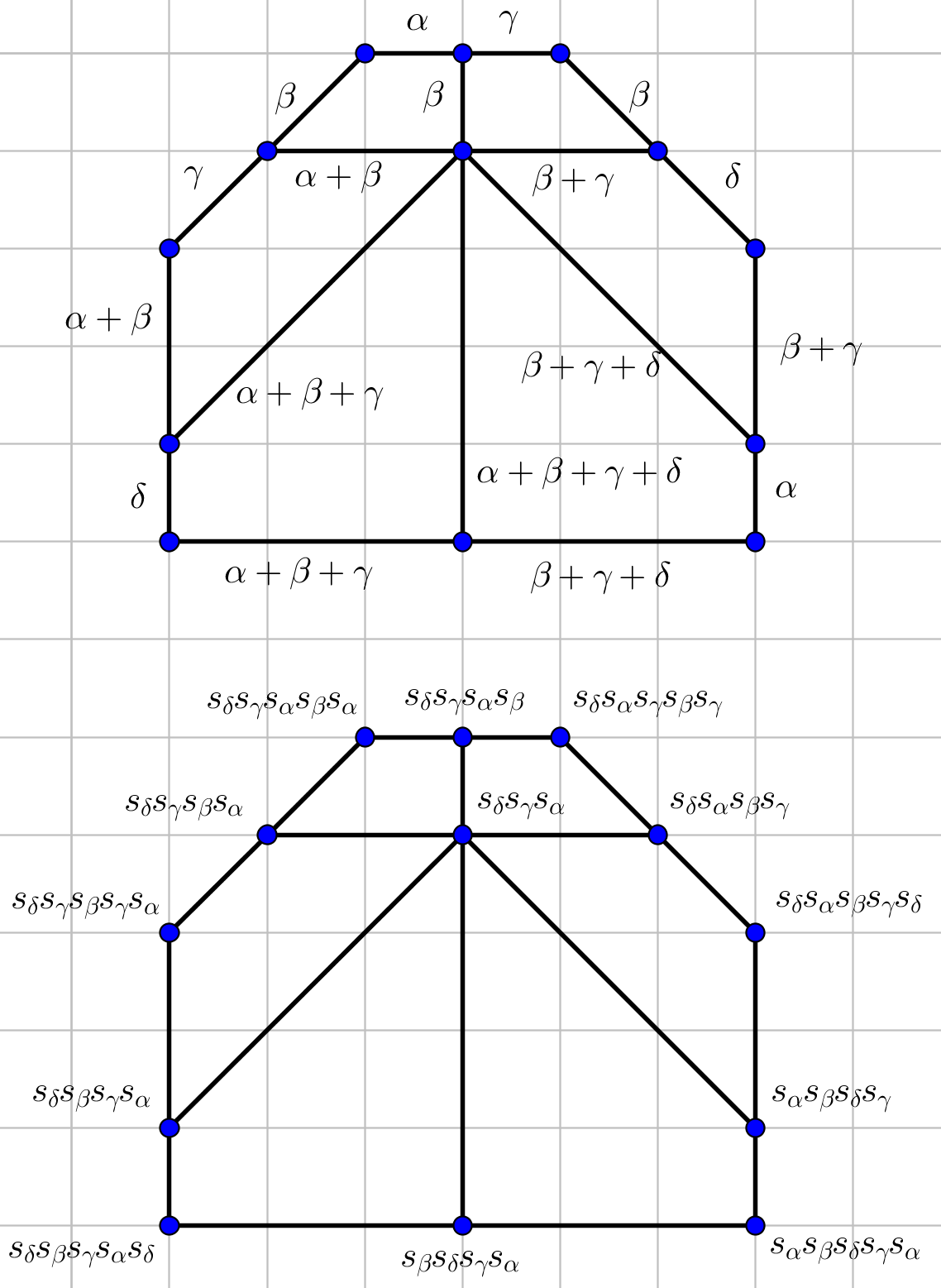}
\end{subfigure}
\begin{subfigure}{0.39\textwidth}
\includegraphics{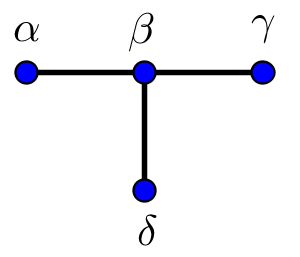}
\end{subfigure}
\end{figure}
\end{minipage}

\subsection*{Height $5$}

\begin{minipage}{\textwidth}\item The pizza $[A_2, A_2^{\text{opp}}b, A_2^{\text{opp}}b, A_2^{\text{opp}}b, A_2^{\text{opp}}b]$:
\begin{figure}[H]
\centering
\begin{subfigure}{0.6\textwidth}
\includegraphics{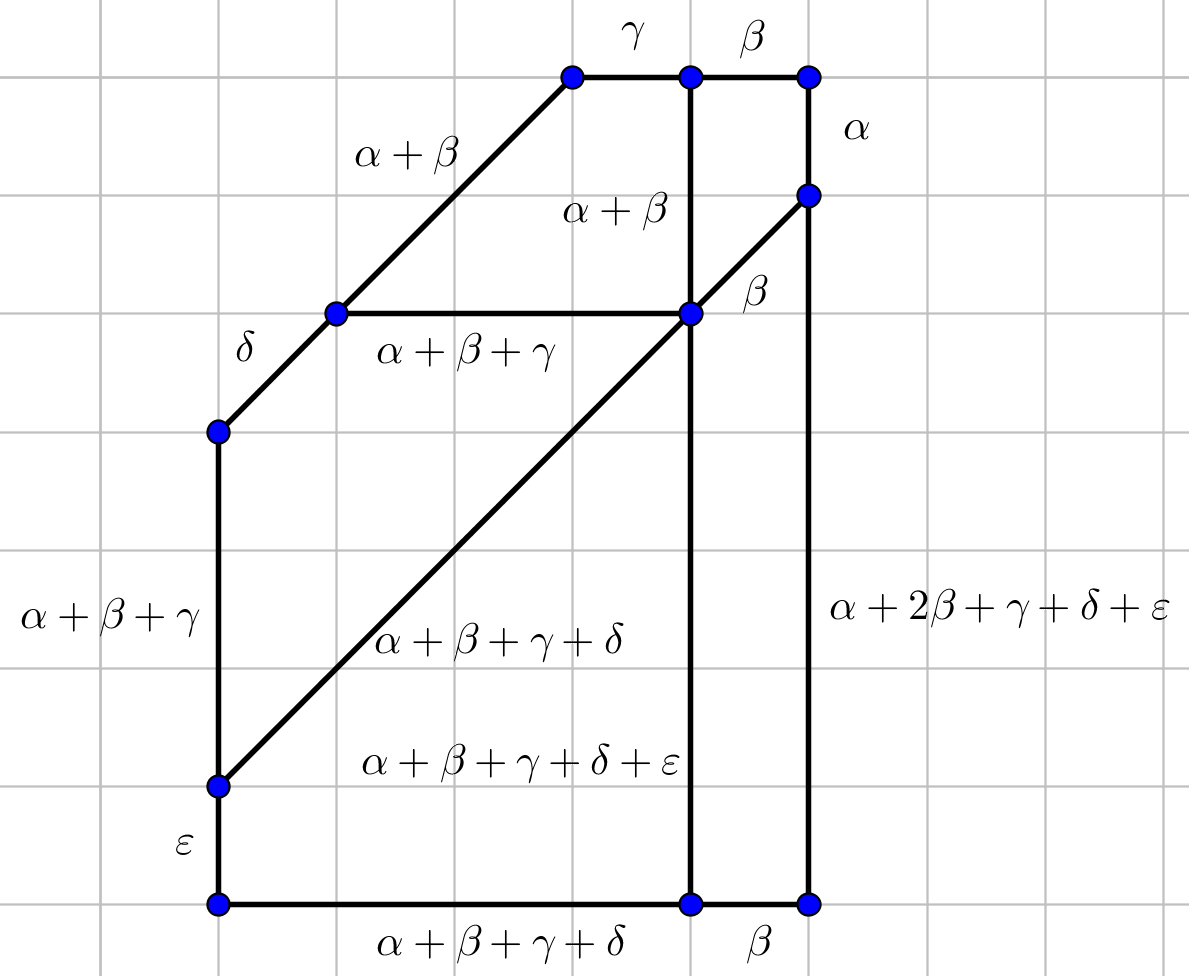}
\end{subfigure}
\begin{subfigure}{0.39\textwidth}
\includegraphics{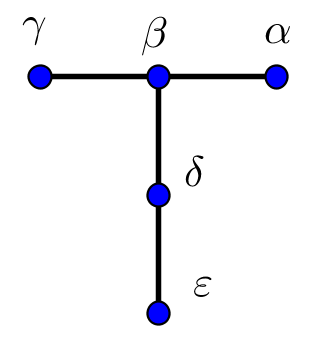}
\end{subfigure}
\end{figure}
\begin{figure}[H]
\centering
\includegraphics{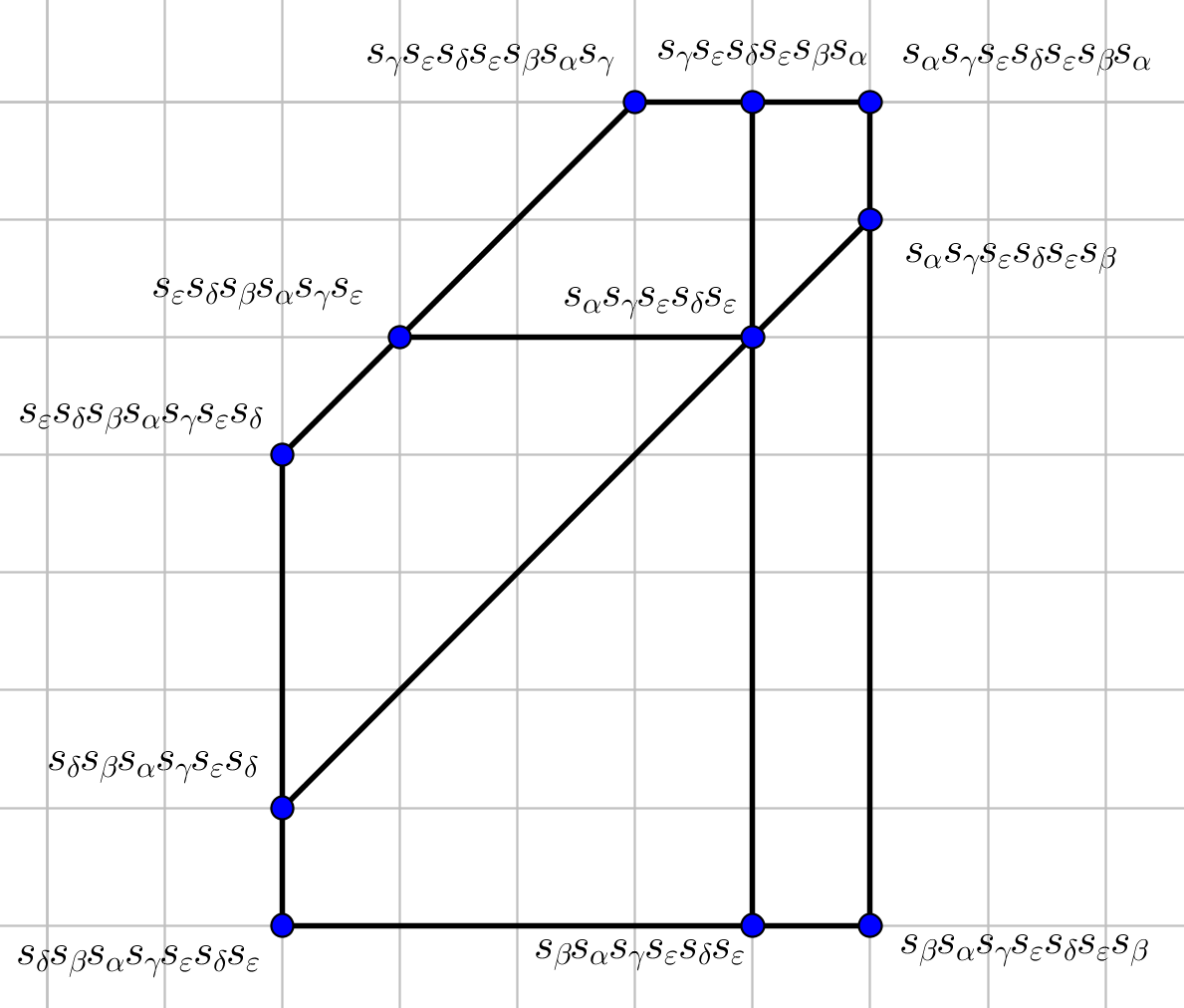}
\end{figure}
\end{minipage}

\begin{minipage}{\textwidth}\item The pizza $[A_2^{opp}, A_2^{opp}, A_2^{opp}, A_2^{opp}b, A_2]$
\begin{figure}[H]
\centering
\begin{subfigure}{0.7\textwidth}
\includegraphics{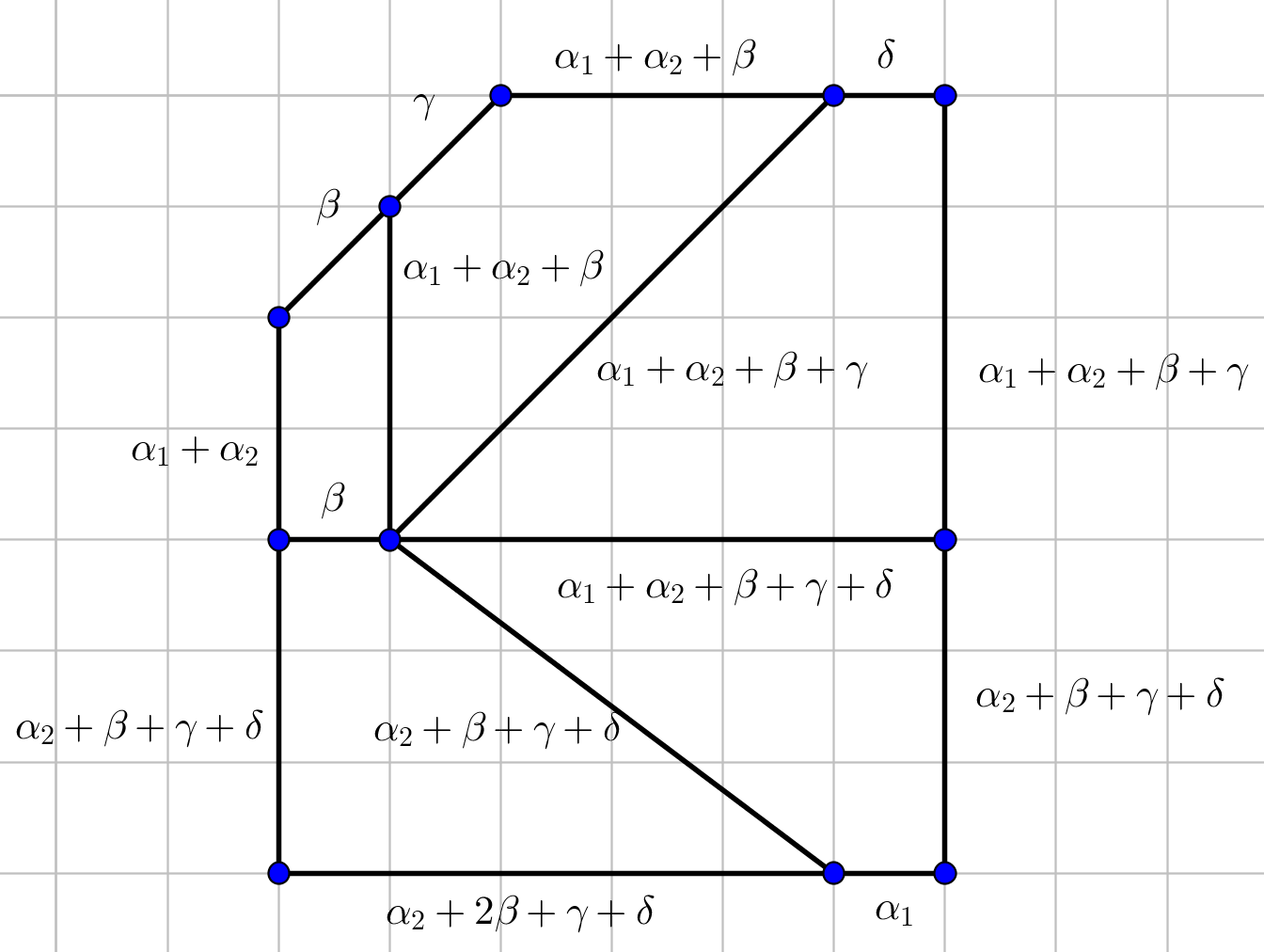}
\end{subfigure}
\begin{subfigure}{0.29\textwidth}
\includegraphics{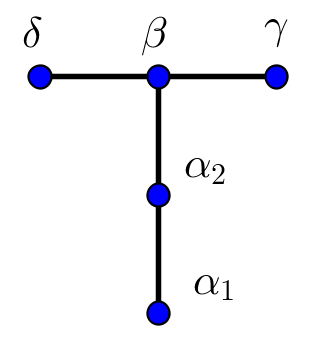}
\end{subfigure}
\end{figure}
and the $W$-elements:
\begin{figure}[H]
\centering
\includegraphics{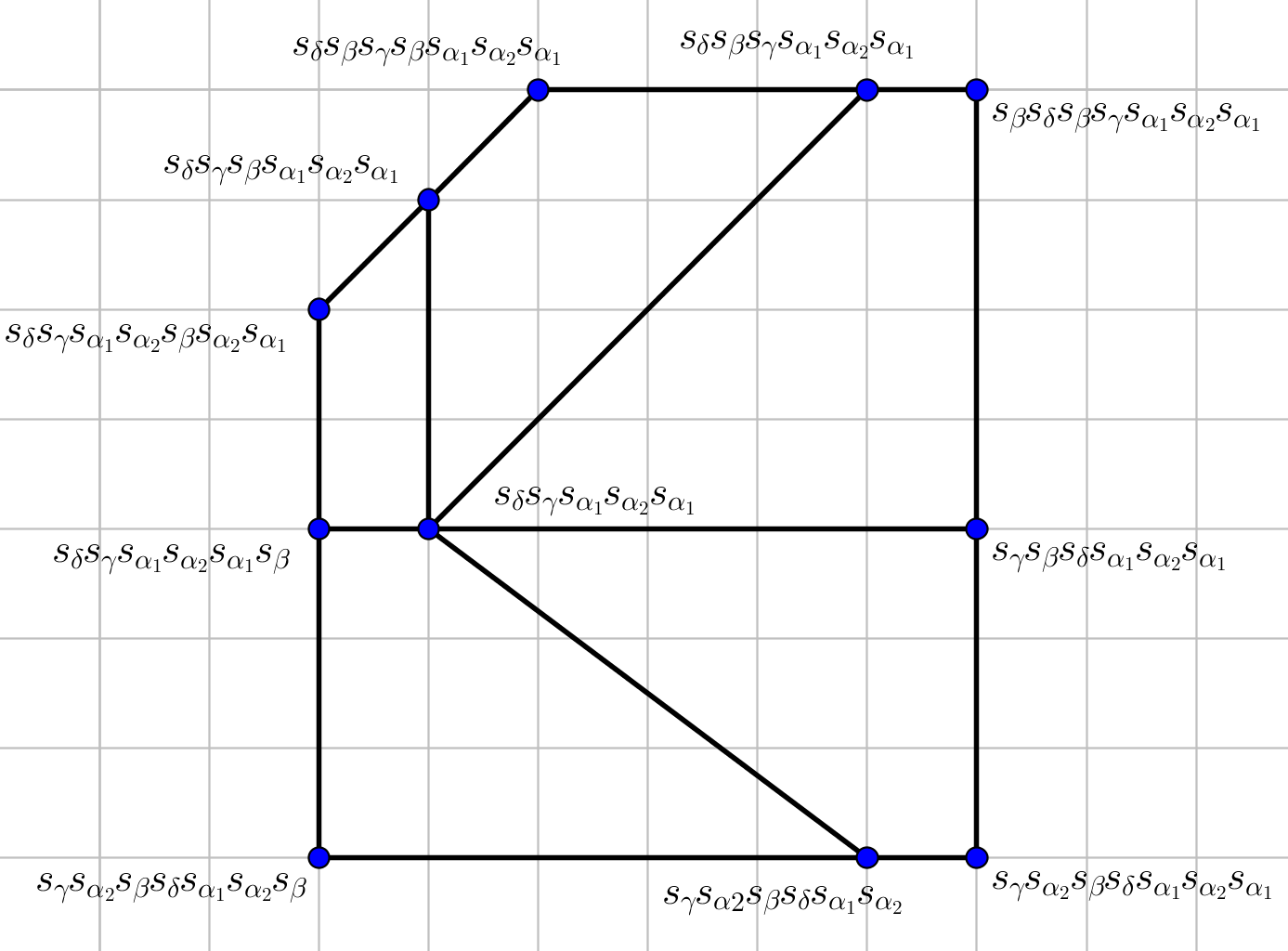}
\end{figure}
\end{minipage}

\subsection*{Height $9$}

\begin{minipage}{\textwidth}\item The pizza $[A_2, A_2^{\text{opp}}b, A_2^{\text{opp}}, A_2^{\text{opp}}, A_2^{\text{opp}}]$:
\begin{figure}[H]
\centering
\begin{subfigure}{0.6\textwidth}
\includegraphics{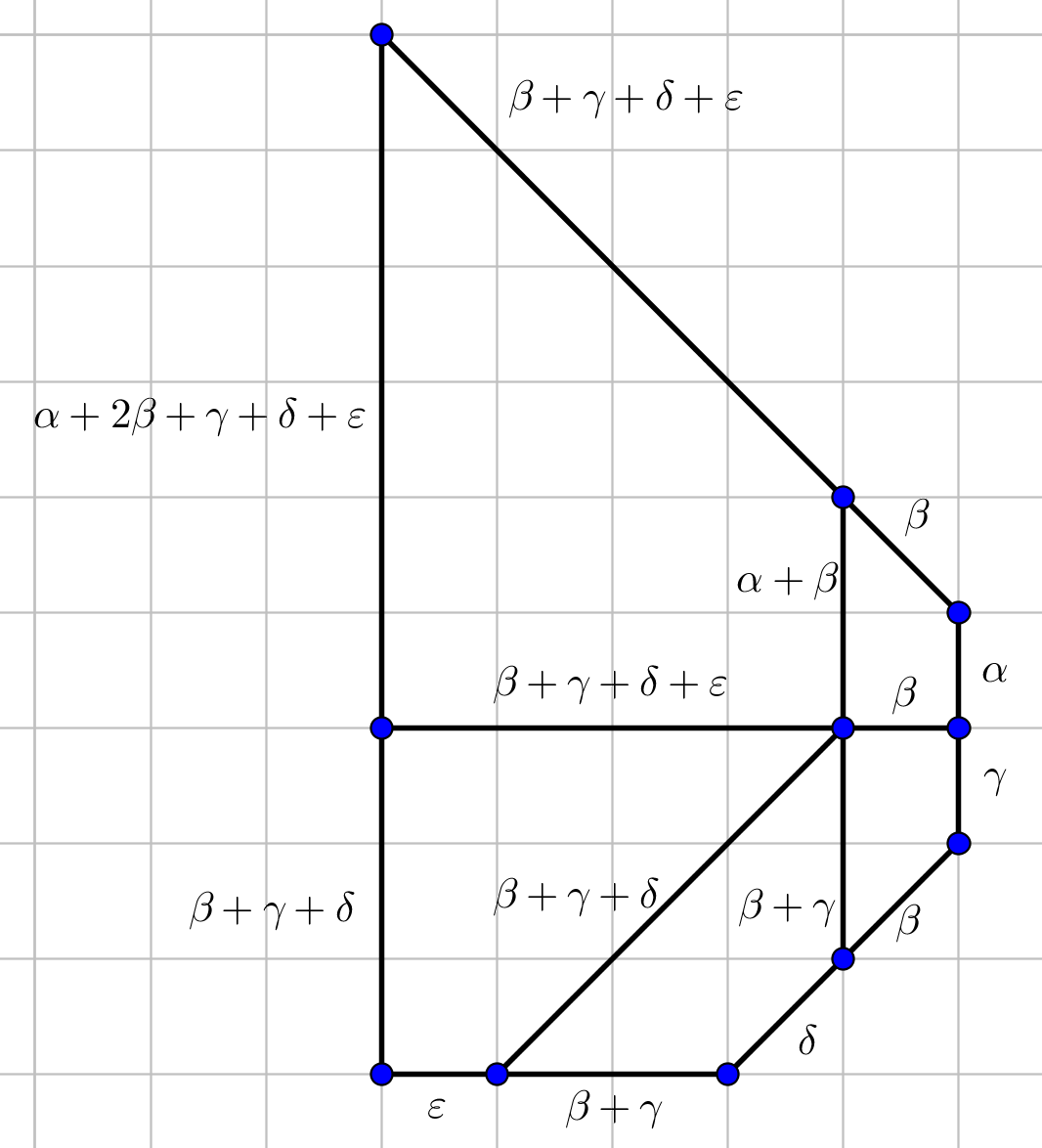}
\end{subfigure}
\begin{subfigure}{0.39\textwidth}
\includegraphics{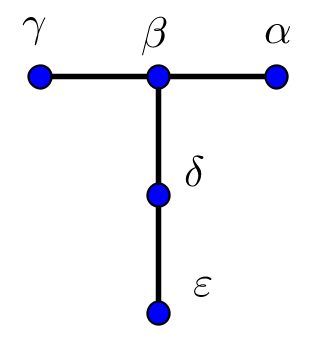}
\end{subfigure}
\end{figure}
\begin{figure}[H]
\centering
\includegraphics{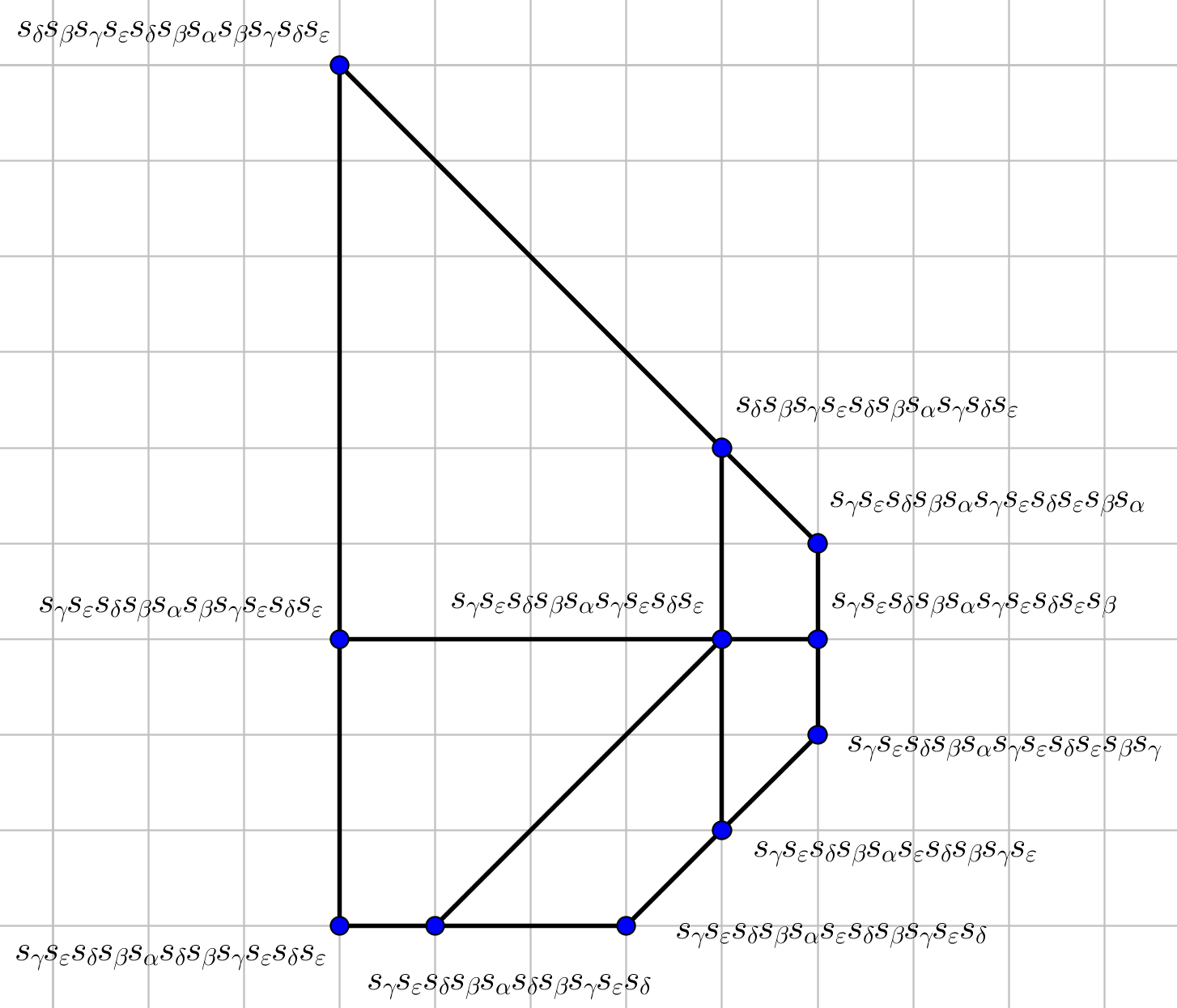}
\end{figure}
\end{minipage}

\end{enumerate}
\section{Embedded degenerations}\label{sec:embeddeddegens}
Here we will describe an embedded degeneration (from definition \ref{def:KLatlas}) of a smooth toric surface $M$ into the union of Richardson varieties. We will find a point $x$ in $H/B_H$ such that $\overline{T_M\cdot x}\cong M$. We will do this by determining which Pl\"{u}cker coordinates vanish. Consider the following diagram:
\[
\xymatrix{
M\ar@{^{(}->}[d]\ar[rr]^{\Phi_{T_H}} & & \Phi_{T_H}(M)\ar@{^{(}->}[d]\ar[r]^{\Phi_{T_M}} & \Phi_{T_M}(M)\ar@{^{(}->}[d] \\
H/B_H\ar@{->>}[r] & H/P^{\alpha}\ar^{\Phi_{T_H}}[r] & Q\ar^{\Phi_{T_M}}[r] & \Phi_{T_M}(Q)
}
\]
where $P^{\alpha}$ is the maximal proper parabolic not containing the subgroup corresponding to $-\alpha$, $Q=\Phi_{T_H}(H/P^{\alpha})$ is the moment polytope of $H/P^{\alpha}$, and $\Phi_{T_M}:\ft^*_H\to\ft^*_M$ is the map induced by $T_M\subseteq T_H$. Each of the vertices $\lambda$ of $\Phi_{T_H}(Q)$ corresponds to a Pl\"{u}cker coordinate, and if $\Phi_{T_M}(\lambda)\notin \Phi_{T_M}(M)$, then we know that the $\lambda$ Pl\"{u}cker coordinate should vanish on $M$. To find an embedding $M\hookrightarrow H/B_H$, we just need to find an element $x\in H/B_H$ for which exactly these Pl\"{u}cker coordinates vanish, and take $\overline{T_M\cdot x}\subseteq H/B_H$.

We go through an example. Consider the pizza
\begin{figure}[H]
\centering
\includegraphics{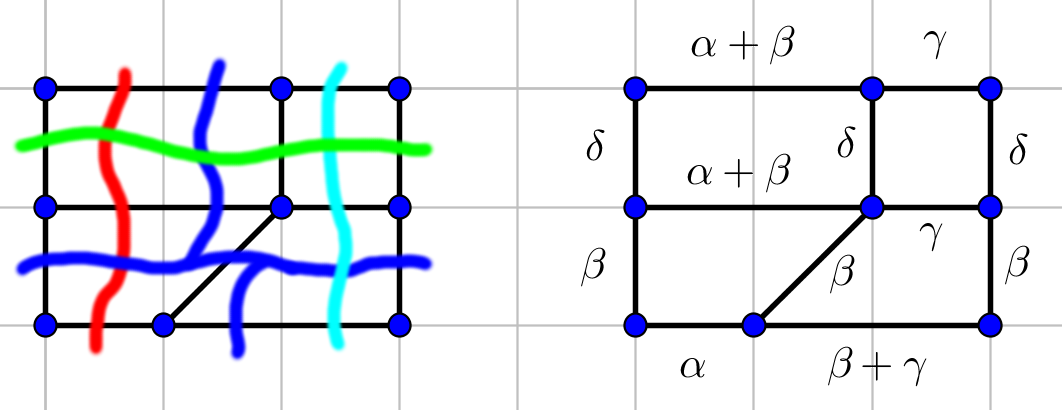}
\end{figure}
\noindent and $H$ with diagram
\begin{figure}[H]
\centering
\includegraphics{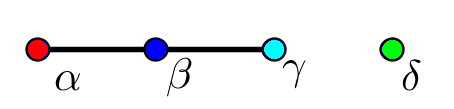}
\end{figure}
\noindent and labels (written in one line notation by the identification $W=S_4\times S_2$)
\begin{figure}[H]
\centering
\includegraphics{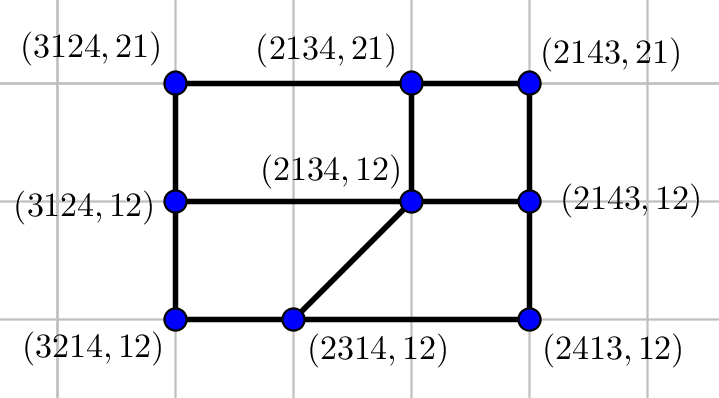}
\end{figure}
\noindent We relabel the edges in a way that they correspond to the left-multiplications (action on values as opposed to positions) in $W$:
\begin{figure}[H]
\centering
\includegraphics{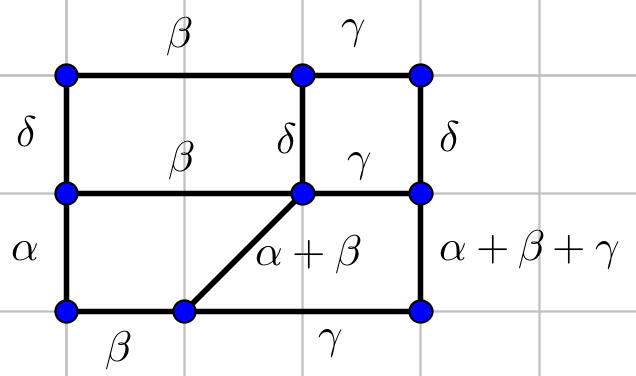}
\end{figure}
\noindent Then read off the directions that left-multiplication by simple roots correspond to
\begin{figure}[H]
\centering
\includegraphics{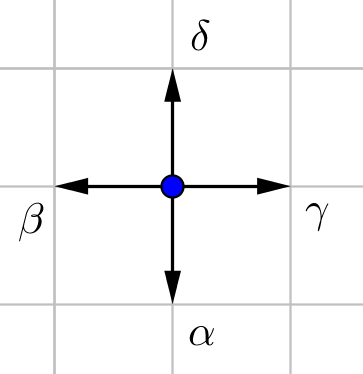}
\caption{$T_M\subset T_H$\label{fig:TMinsideT}}
\end{figure}
\noindent Note that this expresses how the subtorus $T_M$ sits in $T_H$. Then for all four simple roots, we contract the edges of the pizza except the ones whose label contains the chosen simple root as a summand, and see which Pl\"{u}cker coordinates lie outside the polytope:
\begin{figure}[H]
\centering
\includegraphics{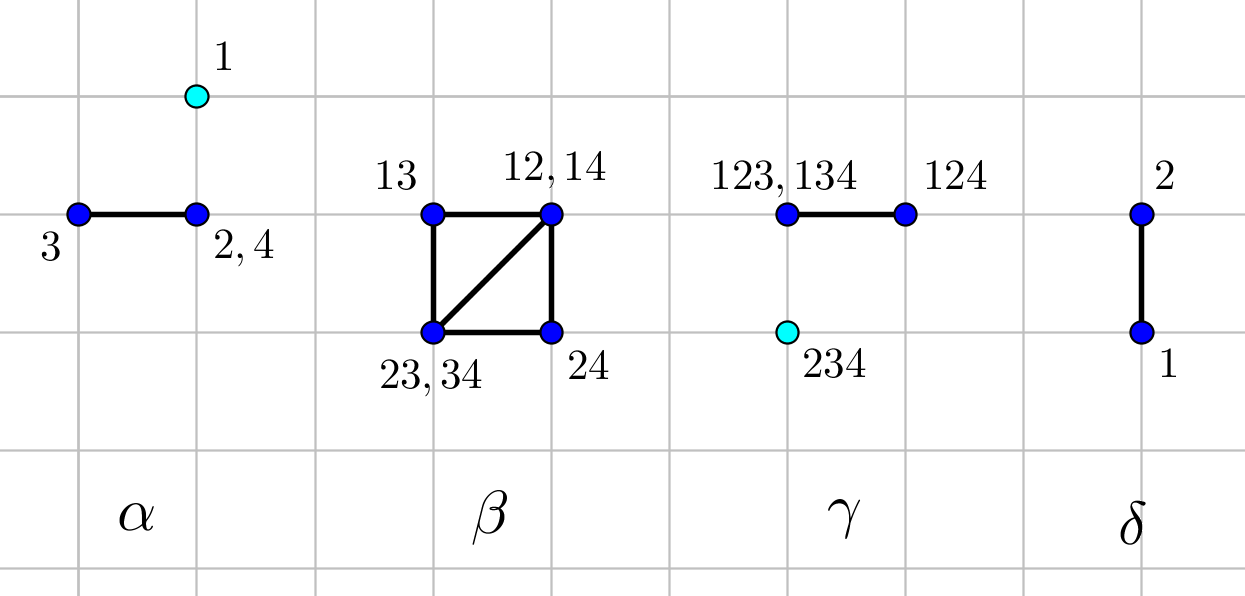}
\end{figure}
So the vanishing Pl\"{u}cker coordinates are: $(1,-)$ and $(234,-)$. A representative in $H$ for which precisely these Pl\"{u}cker coordinates vanish is the pair of matrices
\[
r=
\left(
\begin{pmatrix}
0 & 1 & 0 & 0 \\
1 & 0 & 1 & 0 \\
1 & 1 & 2 & 0 \\
1 & 2 & 3 & -1
\end{pmatrix},
\begin{pmatrix}
1 & 0 \\
1 & 1 
\end{pmatrix}
\right),
\]
where $(1,-)$ is the $(1,1)$-entry of the first matrix, and $(234,-)$ is the minor formed by the columns $1,2,3$ and the rows $2,3,4$ of the first matrix.
A parametrization for $T_M$ is (by figure \ref{fig:TMinsideT})
\[
\left(
\begin{pmatrix}
a & 0 & 0 & 0 \\
0 & ab^{-2} & 0 & 0 \\
0 & 0 & a^{-3}b^4 & 0 \\
0 & 0 & 0 & ab^{-2}
\end{pmatrix},
\begin{pmatrix}
b^{-1} & 0 \\
0 & b
\end{pmatrix}
\right),
\]
with $a,b\in \bC^\times$, so we have $M \cong \overline{T_M\cdot rB_H/B_H}$.



\end{document}